\documentclass[a4paper,12pt]{amsart}

\usepackage{amsmath}
\usepackage{amssymb}
\usepackage{amsthm}
\usepackage[mathscr]{eucal}

\newcommand{\Ant}{\operatorname{Ant}}
\newcommand{\CAT}{\operatorname{CAT}}
\newcommand{\CBA}{\operatorname{CBA}}
\newcommand{\CBB}{\operatorname{CBB}}
\newcommand{\diam}{\operatorname{diam}}
\newcommand{\GCBA}{\operatorname{GCBA}}
\newcommand{\ulim}{\operatorname{\lim_{\omega}}}

\newcommand{\R}{\operatorname{\mathbb{R}}}

\newcommand{\N}{\operatorname{\mathbb{N}}}
\newcommand{\Sph}{\operatorname{\mathbb{S}}}
\newcommand{\GH}{\operatorname{\mathrm{GH}}}
\newcommand{\T}{\operatorname{\mathrm{T}}}

\numberwithin{equation}{section}

\theoremstyle{plain}
\newtheorem{thm}{Theorem}[section]
\newtheorem{lem}[thm]{Lemma}  
\newtheorem{prop}[thm]{Proposition}

\theoremstyle{definition}
\newtheorem{defn}{Definition}[section]

\theoremstyle{remark}
\newtheorem{rem}{Remark}[section]

\begin{document} 

\title
[Asymptotic geometric regularity of CAT(0) spaces]
{Asymptotic geometric regularity of \\ CAT(0) spaces}

\author
[Koichi Nagano]{Koichi Nagano}

\thanks{
Partially supported
by JSPS KAKENHI Grant Numbers 20K03603, 25K06996}

\address
[Koichi Nagano]
{\endgraf 
Department of Mathematics, University of Tsukuba
\endgraf
Tennodai 1-1-1, Tsukuba, Ibaraki, 305-8571, Japan}

\email{nagano@math.tsukuba.ac.jp}

\date{February 24, 2026}

\keywords
{$\CAT(0)$ space, $\CAT(\kappa)$ space}
\subjclass
[2020]{53C20, 53C23}

\begin{abstract}
We prove that if an $n$-dimensional geodesically complete 
$\CAT(0)$ space has Tits boundary sufficiently close to 
the $(n-1)$-dimensional standard unit sphere,
then it is bi-Lipschiz 
homeomorphic to the $n$-dimensional Euclidean space.
As an application,
we conclude that
if an $(n-1)$-dimensional geodesically complete 
$\CAT(1)$ space is sufficiently close to 
the $(n-1)$-dimensional standard unit sphere,
then they are bi-Lipschiz 
homeomorphic to each other.
\end{abstract}

\maketitle


\section{Introduction}

In this paper,
from a viewpoint of asymptotic geometry,
we study a problem
when a $\CAT(0)$ space is bi-Lipschitz homeomorphic to
the $n$-dimensional Euclidean space $\R^n$.

As formulated by Gromov \cite{gromovh},
a $\CAT(0)$ space 
is defined as a complete geodesic metric space
that is globally non-positively curved in the sense of Alexandrov.
We recall that
a connected complete Riemannian manifold is $\CAT(0)$
if and only if it is a Hadamard manifold,
namely,
a simply connected complete Riemannian manifold 
of non-positive sectional curvature;
in this case,
it is diffeomorphic to $\R^n$.

Let $X$ be a $\CAT(0)$ space.
The ideal boundary $\partial_{\infty}X$ of $X$ is defined as 
the set of all asymptotic equivalence classes of rays in $X$.
The Tits boundary $\partial_{\mathrm{T}}X$ of $X$
is defined as
the ideal boundary $\partial_{\infty}X$ of $X$ equipped with
the Tits metric $d_{\mathrm{T}}$
(see Subsection 2.6).
We notice that $\partial_{\mathrm{T}}X$ is a $\CAT(1)$ space, 
whenever $X$ admits a ray.
If $X$ is a proper, geodesically complete $\CAT(0)$ space, 
and if the Tits boundary $\partial_{\mathrm{T}}X$ of $X$ is isometric to 
the $(n-1)$-dimensional standard unit sphere $\Sph^{n-1}$, 
then $X$ is isometric to $\R^n$. 
This rigidity follows from an observation of Leeb \cite[Proposition 2.1]{leeb}, obtained as a generalization of Schroeder's work 
in \cite[Appendix 4]{ballmann-gromov-schroeder} for Hadamard manifolds.
As a result of asymptotic geometric regularity,
the author \cite[Theorems 5.10 and 5.11]{nagano5} proved 
that 
if $X$ is a proper, geodesically complete $\CAT(0)$ space, 
and if $\partial_{\T}X$
is sufficiently close to $\Sph^{n-1}$
with respect to the Gromov--Hausdorff distance,
then $X$ is bi-Lipschitz homeomorphic to $\R^n$.

Our main purpose is
to generalize the results of asymptotic geometric regularity
mentioned above
without the properness assumption.

\subsection{Main theorems}

Throughout this paper,
we denote by $\dim$ 
the covering (topological) dimension of metric spaces.
We denote by $d_{\GH}$ 
the Gromov--Hausdorff distance between metric spaces.

As the main result of this paper,
we prove the following asymptotic geometric regularity theorem
for $\CAT(0)$ spaces:

\begin{thm}
\label{thm: agrcat0}
For every $\epsilon \in (0,\infty)$,
and for every $n \in \N$,
there exists $\delta \in (0,\infty)$ such that
if a separable, geodesically complete $\CAT(0)$ space $X$
of $\dim X \le n$
satisfies
$d_{\GH} \left( \partial_{\mathrm{T}}X, \Sph^{n-1} \right) < \delta$,
then $X$ is $(1+\epsilon)$-bi-Lipschitz homeomorphic to $\R^n$.
\end{thm}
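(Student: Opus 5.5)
The plan is to reduce Theorem \ref{thm: agrcat0} to the proper case treated in \cite[Theorems 5.10 and 5.11]{nagano5}. The reduction has two stages. First, show that $X$ is proper, or at least that its balls are uniformly compact. Second, run the asymptotic comparison argument with the Euclidean cone $C_0(\partial_{\T}X)$, which is close to $\R^n$.

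\emph{Properness.} Since $X$ is separable, geodesically complete and $\dim X\le n$, a theorem of Lytchak--Nagano type gives local compactness on an open dense set, but not properness in general. The plan is to exploit the boundary. For each point $p\in X$, the space of directions $\Sigma_pX$ is a geodesically complete $\CAT(1)$ space. The logarithm-type map from $\partial_{\T}X$ to $\Sigma_pX$, sending a ray class to the direction at $p$ of the ray from $p$, is $1$-Lipschitz. Because $X$ is geodesically complete, this map should also be surjective onto a dense set.

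Since $\partial_{\T}X$ is $\delta$-close to $\Sph^{n-1}$, it is totally bounded and of dimension $n-1$. I expect to deduce that each $\Sigma_pX$ is compact with $\diam$-type bounds, and hence that $X$ is locally compact. Local compactness plus completeness and geodesic completeness then gives properness by the Hopf--Rinow theorem for length spaces. If surjectivity fails, the fallback is to pass to an ultralimit and use Gromov--Hausdorff precompactness of $\partial_{\T}X$. This would show that every ball of radius $R$ is covered by $N(R,n,\delta)$ balls of radius $1$.

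\emph{Asymptotic comparison.} Once $X$ is proper, I would invoke \cite[Theorems 5.10 and 5.11]{nagano5} directly. Alternatively, I would re-derive the result as follows. Fix $p\in X$ and consider the rescalings $(X,p,d/t)$ as $t\to\infty$. These converge, in the pointed Gromov--Hausdorff or ultralimit sense, to the Euclidean cone over $\partial_{\T}X$. That cone is $C\delta$-close to $\R^n$ on unit balls.

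Choosing $n$ orthogonal-ish pairs of opposite boundary points $\xi_i^\pm$, I would use the Busemann functions $b_{\xi_i^+}$ to define the map
\[
F=(b_{\xi_1^+},\dots,b_{\xi_n^+})\colon X\to\R^n .
\]
It is $\sqrt{n}$-Lipschitz trivially. The goal is to show that it is $(1+\epsilon)$-bi-Lipschitz. The key estimate is that for any $x,y\in X$ the direction of $[x,y]$ makes angles with the $\xi_i^\pm$ close to those of a Euclidean frame. The angle-at-infinity comparisons, together with the near-sphericity of $\partial_{\T}X$ and geodesic completeness, give that every geodesic extends to a ray whose endpoint lies $\delta$-close to the expected point. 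The derivative of $F$ along $[x,y]$ then has norm within $\epsilon$ of $1$ after a suitable orthonormalization.

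Finally, I would show $F$ is a homeomorphism onto $\R^n$. Injectivity and openness follow from the bi-Lipschitz bound together with invariance of domain, applied after showing that $X$ is a topological $n$-manifold: its links $\Sigma_pX$ are close to $\Sph^{n-1}$. Surjectivity follows since $F$ is proper and open.

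The main obstacle will be the properness and compactness step without assuming properness. A secondary difficulty is making the frame-angle estimate uniform over all pairs $x,y$, not just asymptotically. I would try to handle it via the monotonicity of angles toward Tits angles along rays.
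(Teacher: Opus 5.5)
\textbf{There is a genuine gap: the properness step does not work, and the proof rests on it.}

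The map $\xi\mapsto\xi_p'$ from $\partial_{\T}X$ to $\Sigma_pX$ is indeed $1$-Lipschitz, since $\angle_p\le\angle\le d_{\T}$. It also has dense image, because segments extend to rays. However, $d_{\GH}(\partial_{\T}X,\Sph^{n-1})<\delta$ only gives a finite net in $\partial_{\T}X$ at scale comparable to $\delta$. It says nothing about total boundedness at smaller scales, and nothing about dimension. So you cannot conclude that $\Sigma_pX$ is compact.

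Worse, compactness of every $\Sigma_pX$ would still not give local compactness. Attach a copy of $[0,\infty)$ to $\R$ at each point $1/k$, $k\in\N$. The result is a separable, $1$-dimensional, geodesically complete $\CAT(0)$ tree whose spaces of directions all have at most three points, yet no ball around $0$ is compact. The ultralimit fallback is a hope, not an argument. Properness does hold a posteriori, since $X\cong\R^n$. But proving it beforehand is exactly the difficulty the paper is written to bypass, so reducing to \cite{nagano5} leaves the theorem unproved. As a side point, the rescalings of $X$ converge to an asymptotic cone that contains $C_0(\partial_{\T}X)$ but need not equal it.

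\textbf{Your self-contained alternative omits the two mechanisms that carry the paper's proof.} The paper never uses compactness. It takes the same Busemann map $\varphi=(b_{\gamma_1},\dots,b_{\gamma_n})$ for a strainer at infinity. It observes that the directions $(\xi_i)_x'$ form an almost-suspender in $\Sigma_xX$ at every $x$ (Lemma \ref{lem: sphstrinf}). It then argues by ultralimits of the possibly non-compact spaces $\Sigma_xX$. In the limit the suspender is rigid, the space of directions splits as $\Sph^{n-1}\ast C^{\perp}$ (Proposition \ref{prop: m0susp}), and $D_x\varphi$ becomes a submetry. This gives that $\varphi$ is $(1+\epsilon)$-Lipschitz. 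It also gives that $\varphi$ is $(1+\epsilon)$-open, via Lytchak's open map theorem \cite{lytchak2}, which is valid in complete spaces. No invariance of domain is needed, and hence no claim that ``links close to $\Sph^{n-1}$'' make $X$ a manifold; that claim is unsupported anyway.

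The hypothesis $\dim X\le n$ enters exactly at injectivity. If $\varphi(x)=\varphi(y)$ with $x\neq y$, Lemma \ref{lem: almflattri} forces $y_x'$ to be almost orthogonal to every $(\xi_i)_x'$. Lemma \ref{lem: fullsusp} rules this out in a penetrable $\CAT(1)$ space of dimension at most $n-1$. Your sketch asserts that the derivative of $F$ along $[x,y]$ has norm near $1$ ``after a suitable orthonormalization''. It gives no source for the lower bound, and that lower bound is precisely this dimension argument. Once $\varphi$ is injective, Lipschitz and open on a complete space, it is the desired bi-Lipschitz homeomorphism onto $\R^n$, and properness is an output rather than an input.
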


In Theorem \ref{thm: agrcat0},
a desired bi-Lipschitz homeomorphism 
is given by a map on $X$ with Busemann function coordinates.
When we analyze such a regular map on $X$ 
with Busemann function coordinates,
we utilize the ideas of the theory 
of strainer maps on $\GCBA$ spaces 
with distance function coordinates
developed by 
Lytchak and the author \cite{lytchak-nagano1}.
In fact,
we need a generalization to 
the case of geodesically complete $\CBA$ spaces 
recently examined by the author \cite{nagano6}.

\begin{rem}
The author \cite{nagano5} studied asymptotic topological regularity
of $\CAT(0)$ spaces in terms of volume growth,
and proved that if a purely $n$-dimensional,
proper, geodesically complete $\CAT(0)$ space 
has small Euclidean volume growth,
then it is homeomorphic to $\R^n$.
\end{rem}

\begin{rem}
Gromov \cite{gromovq} asked the question whether 
there exists a Busemann convex geodesic metric space 
that is a topological $n$-manifold differ from $\R^n$.
We note that every $\CAT(0)$ space is Busemann convex.
For the case of $n \ge 5$,
Davis--Januszkiewicz \cite[Theorem 5b.1]{davis-januszkiewicz}
gave an affirmative answer,
in fact,
showed that for each $n \in \N$ with $n \ge 5$,
there exists a piecewise flat $\CAT(0)$ polyhedron
that is a topological $n$-manifold not homeomorphic to $\R^n$
(see also \cite{ancel-davis-guilbault}).
In the case of $n = 3$,
if a Busemann convex geodesic metric space 
is a topological $3$-manifold,
then it is homeomorphic to $\R^3$ 
(\cite{brown}, \cite{rolfsen}, \cite{thurston}).
In the case of $n = 4$,
Lytchak, Stadler, and the author \cite{lytchak-nagano-stadler}
proved that
every $\CAT(0)$ topological $4$-manifold
is homeomorphic to $\R^4$.
Fujioka--Gu \cite{fujioka-gu} have recently shown that
every Busemann convex topological $4$-manifold
is homeomorphic to $\R^4$.
\end{rem}

As an application of Theorem \ref{thm: agrcat0},
we conclude the following geometric regularity theorem
for $\CAT(1)$ spaces:

\begin{thm}
\label{thm: grcat1}
For every $\epsilon \in (0,\infty)$,
and for every $n \in \N$,
there exists $\delta \in (0,\infty)$ such that
if a separable, geodesically complete $\CAT(1)$ space $Z$
of $\dim Z \le n-1$ satisfies
$d_{\GH} \left( Z, \Sph^{n-1} \right) < \delta$,
then $Z$ is $(1+\epsilon)$-bi-Lipschitz homeomorphic to $\Sph^{n-1}$.
\end{thm}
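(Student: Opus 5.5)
The plan is to deduce Theorem \ref{thm: grcat1} from Theorem \ref{thm: agrcat0} by passing to the Euclidean cone. Given $Z$ as in the statement, let $X = C_0 Z$ be the Euclidean cone over $Z$. Berestovskii's theorem tells us $X$ is $\CAT(0)$ because $Z$ is $\CAT(1)$. In the first step I will check that the other hypotheses of Theorem \ref{thm: agrcat0} transfer to $X$:
\begin{itemize}
\item $X$ is separable since $Z$ is.
\item $\dim X \le \dim Z + 1 \le n$, from the product-type structure $X \setminus \{o\} \cong Z \times (0,\infty)$ together with the one-point compactification at the vertex.
\item $X$ is geodesically complete, since every geodesic in $Z$ extends to a local geodesic of length $\pi$.
\end{itemize}
The third point needs a little care. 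Away from the vertex, extendability follows from extendability in $Z$ through the cone formula. At the vertex, a segment ending at $o$ extends through $o$ along the ray in the direction of any point at distance $\ge \pi$ from the starting direction. Such points exist because geodesic completeness of $Z$ yields local geodesics of length $\pi$, and in a $\CAT(1)$ space these are distance-realizing up to $\pi$, i.e.\ antipodes.

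In the second step I identify $\partial_{\T}X$ with $Z$. Rays from the vertex $o$ are exactly the cone rays $t \mapsto (z,t)$. The Tits angle between two of them is $\min\{d(z,z'),\pi\}$, and the Tits distance, being the induced length metric, gives back $d_Z$. So $\partial_{\T}X$ is isometric to $Z$, a standard fact for Euclidean cones over $\CAT(1)$ spaces. It follows that $d_{\GH}(\partial_{\T}X,\Sph^{n-1}) < \delta$, and Theorem \ref{thm: agrcat0} gives a $(1+\epsilon')$-bi-Lipschitz homeomorphism $F\colon X \to \R^n$.

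In the third step I descend from $F$ to $Z$. I expect this to be the main obstacle, because $F$ need not be conical. There are two routes. The first is to use the construction in the proof of Theorem \ref{thm: agrcat0}: $F$ is given by Busemann function coordinates $b_{\xi_i}$. For a cone, the Busemann functions attached to ideal points $\xi_i \in Z$ based at $o$ are $1$-homogeneous, $b_{\xi}(z,t) = -t\cos d(z,\xi)$ with truncation where $d \ge \pi$, so $F$ commutes with dilations and fixes $o$. Then $F$ restricts to a map from the unit sphere $\{t=1\} \cong Z$ onto a star-shaped hypersurface in $\R^n$. Radial projection onto $\Sph^{n-1}$, composed with this restriction, gives a homeomorphism. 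Its bi-Lipschitz constant is controlled by $1+\epsilon'$, because $|F(x)|$ lies between $(1+\epsilon')^{-1}$ and $1+\epsilon'$ on the unit sphere.

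The second route, if the coordinates in Theorem \ref{thm: agrcat0} are not homogeneous, is to replace $F$ by its blow-down limit $\lim_{\lambda\to\infty} \lambda^{-1}F(\lambda\,\cdot)$. This limit exists along subsequences by Arzel\`a--Ascoli after choosing a compatible basepoint, is still $(1+\epsilon')$-bi-Lipschitz, and is homogeneous since $X$ is a cone. Either way, radial projection produces the desired map. The remaining work is to verify that radial projection and the metric relation between $Z$ and the unit sphere of $X$ distort distances only by a factor tending to $1$ as $\epsilon' \to 0$. For chord versus angle, $d_{X}((z,1),(z',1)) = 2\sin(d(z,z')/2)$, and the ratio of this to $d(z,z')$ is close to $1$ locally. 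So we get local $(1+\epsilon)$-bi-Lipschitz control, which upgrades to global control by integrating along geodesics of the length spaces $Z$ and $\Sph^{n-1}$. Finally, choose $\epsilon'$ small in terms of $\epsilon$.
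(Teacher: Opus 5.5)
Your architecture is the paper's: pass to $C_0(Z)$, take the Busemann map $F=(b_{\gamma_1},\dots,b_{\gamma_n})$ along rays from the vertex (which is $1$-homogeneous), and normalize radially. There is, however, a genuine gap in the decisive estimate of step 3. You justify it by the bound $(1+\epsilon')^{-1}\le \Vert F\Vert\le 1+\epsilon'$ on $Z$, and that reason does not work.

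The annulus bound does give the upper (Lipschitz) bound, via $\Vert v/\Vert v\Vert - w/\Vert w\Vert\Vert \le \Vert v-w\Vert/\sqrt{\Vert v\Vert\,\Vert w\Vert}$. It gives nothing for the lower bound. A $(1+\epsilon')$-bi-Lipschitz curve inside a thin annulus can, along arcs much shorter than $\epsilon'$, move radially with a slope of definite size, say $1/2$. Radial projection then shrinks those arcs by a definite factor, about $\sqrt{3}/2$. Comparing $2\sin(\theta/2)$ with $\theta$ does not touch this issue.

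The missing idea is that $F(Z)$ is almost perpendicular to the radial direction, and this is exactly where the paper's proof spends its effort. It applies Lemma \ref{lem: aperp} to the $(n,10\delta)$-suspender in $\Sigma_{\gamma(t)}C_0(Z)$, using the two directions $\xi_{\gamma(t)}$ (tangent to $\gamma$) and $0'_{\gamma(t)}$ (toward the vertex), which are at angle $\pi/2$. This shows the velocity $(D_{\gamma(t)}\varphi_0)(\xi_{\gamma(t)})$ is almost orthogonal to $(D_{\gamma(t)}\varphi_0)(0'_{\gamma(t)})$, which equals $-\varphi_0(\gamma(t))$ by homogeneity. It then integrates along geodesics in $Z$ and in $\Sph^{n-1}$ to get the two inequalities.

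You could also close the gap with the homogeneity you already have, bypassing Lemma \ref{lem: aperp}:
\begin{itemize}
\item Since $F(ty)=tF(y)$, the map $F$ sends the cone ray through $y\in Z$ onto $\R_{\ge 0}F(y)$.
\item For small $\theta=d_Z(x,y)$, the distance in $C_0(Z)$ from $x$ to that ray is $\sin\theta$. The distance from $F(x)$ to $\R_{\ge 0}F(y)$ is $\Vert F(x)\Vert\sin\alpha$, where $\alpha=\angle(F(x),F(y))$.
\item The bi-Lipschitz property together with the annulus bound gives $(1+\epsilon')^{-2}\sin\theta\le\sin\alpha\le(1+\epsilon')^{2}\sin\theta$. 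This is infinitesimal $(1+\epsilon')^2$-control of the radial projection $f$ in both directions.
\item Globalize along geodesics of $Z$ and of $\Sph^{n-1}$, using that $f$ is a homeomorphism for the inverse direction.
\end{itemize}
This is arguably more elementary than the paper's route, but your proposal as written does not contain it.

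Finally, drop your second route. A subsequential blow-down $G=\lim_k \lambda_k^{-1}F(\lambda_k\,\cdot)$ satisfies $G(\mu x)=\mu\lim_k(\mu\lambda_k)^{-1}F(\mu\lambda_k x)$. That is a limit along a different sequence, so homogeneity of $G$ is not automatic. The first route already gives exact homogeneity, so nothing is lost.
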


To derive Theorem \ref{thm: grcat1} 
from Theorem \ref{thm: agrcat0},
we utilize the ideas of the studies 
of geometric regularity of $\CBB$ spaces
with curvature bounded below
developed by Burago--Gromov--Perelman 
\cite[Section 9]{burago-gromov-perelman}.
When actually implementing the ideas,
we need the studies of the author \cite{nagano6}
on almost spherical suspension structure of $\CAT(1)$ spaces.

\begin{rem}
Lytchak and the author \cite[Theorem 8.3]{lytchak-nagano2}
proved that 
if a purely $(n-1)$-dimensional,
compact, geodesically complete $\CAT(1)$ space 
has small volume,
then it is homeomorphic to $\Sph^{n-1}$.
The author \cite{nagano4} studied volume pinching problems 
for $\CAT(1)$ spaces.
\end{rem}

\subsection{Organization}

In Section 2,
we prepare basic concepts in the geometry 
of metric spaces with curvature bounded above,
especially $\CAT(0)$ spaces.
In Section 3,
we quote the studies of the author \cite{nagano6}
concerning an almost multi-fold spherical suspension structure
for $\CAT(1)$ spaces,
and show basic lemmas for the proofs of the main theorems.
In Section 4,
we formulate the notion of Busemann strainer maps on 
geodesically complete $\CAT(0)$ spaces,
and investigate their various regularity.
In Section 5,
we prove Theorems \ref{thm: agrcat0} and \ref{thm: grcat1}.

\subsection*{Acknowledgments}

The author would like to express his gratitude to
Alexander Lytchak for valuable discussions 
in private communications.
The author would like to thank
Takashi Shioya and Takao Yamaguchi
for their interests in this work.

\section{Preliminaries}

We refer the readers to 
\cite{alexander-kapovitch-petrunin-0},
\cite{alexander-kapovitch-petrunin}, 
\cite{ballmann}, \cite{bridson-haefliger}, \cite{burago-burago-ivanov}
for the basic facts
on the geometry of
metric spaces with an upper curvature bound.

\subsection{Metric spaces}

Let $r \in (0,\infty)$.
For a point $p$ in a metric space,
we denote by $U_r(p)$, $B_r(p)$, and $S_r(p)$
the open metric ball of radius $r$ around $p$,
the closed one, and the metric sphere, respectively.

For a metric space $X$ with metric $d_X$,
and for a positive number $\lambda$,
let $\lambda X$ denote the rescaled metric space
defined as $(X, \lambda d_X)$.

Let $X$ be a metric space with metric $d_X$.
Let $d_X \wedge \pi$ be the $\pi$-truncated metric on $X$
defined by $d_X \wedge \pi := \min \{ d_X, \pi \}$.
The 
\emph{Euclidean cone $C_0(X)$ over $X$}
is defined as the cone 
$[0,\infty) \times X / \{ 0 \} \times X$
over $X$
equipped with the Euclidean metric
$d_{C_0(X)}$ given by
\[
d_{C_0(X)} \left( [(t_1,x_1)], [(t_2,x_2)] \right)^2 := 
t_1^2 + t_2^2 - 2t_1t_2 \, \cos \left( d_X \wedge \pi \right) (x_1,x_2).
\]
For simplicity,
we write an element $[(t,x)]$ in $C_0(X)$ as $tx$,
and denote by $0$ the vertex of $C_0(X)$.

\subsection{Maps between metric spaces}

Let $c \in (0,\infty)$.
Let $X$ be a metric space with metric $d_X$,
and $Y$ a metric space with metric $d_Y$.
A map $\varphi \colon X \to Y$ is said to be
\emph{$c$-Lipschitz} 
if 
$d_Y \left( \varphi(x_1), \varphi(x_2) \right) \le c \, d_X(x_1,x_2)$
for all $x_1, x_2 \in X$.
A map $\varphi \colon X \to Y$ is said to be
\emph{$c$-bi-Lipschitz} 
if $f$ is $c$-Lipschitz,
and if $d_X(x_1,x_2) \le c \, d_Y \left( \varphi(x_1), \varphi(x_2) \right)$
for all $x_1, x_2 \in X$
(consequently, $c \in [1,\infty)$).
A $1$-bi-Lipschitz homeomorphism
is nothing but an isometry,
and a $1$-bi-Lipschitz embedding is 
an isometric embedding.
A map $\varphi \colon X \to Y$ is 
\emph{$c$-open}
if for any $r \in (0,\infty)$ and for any $x \in X$
such that $B_{cr}(x)$ is complete,
the ball $U_r \left( \varphi(x) \right)$ in $Y$ 
is contained in the image $\varphi \left( U_{cr}(x) \right)$
of the ball $U_{cr}(x)$ in $X$.
In the case where $X$ is complete,
if a map $\varphi \colon X \to Y$ is $c$-open,
then $\varphi$ is surjective.
Moreover,
in the case where $X$ is complete,
a $c$-Lipschitz map $\varphi \colon X \to Y$
is a $c$-bi-Lipschitz homeomorphism for some $c \in [1,\infty)$
if and only if $\varphi$ is an injective $c$-open map.

A map $\varphi \colon X \to Y$ is said to be a
\emph{$c$-approximation} between $X$ and $Y$
if $\bigcup_{y \in \varphi(X)} U_r(y)$ coincides with $Y$,
and if for all $x_1, x_2 \in X$ we have
\[
\left\vert
d_Y \left( \varphi(x_1), \varphi(x_2) \right) - d_X(x_1,x_2)
\right\vert
< c.
\]
If there exists a $c$-approximation $\varphi \colon X \to Y$,
then there exists a $2c$-approximation $\psi \colon Y \to X$ such that
for all $x \in X$ and $y \in Y$ we have
$d_X \left( (\psi \circ \varphi)(x), x \right) < 2c$ and
$d_Y \left( (\varphi \circ \psi)(y), y \right) < 2c$.

\subsection{Geodesic metric spaces}

Let $X$ be a metric space.
A \emph{geodesic $\gamma \colon I \to X$} 
means an isometric embedding from an interval $I$.
For a pair of points $p, q$ in $X$,
a \emph{geodesic $pq$ in $X$ from $p$ to $q$}
means the image of an isometric embedding
$\gamma \colon [a,b] \to X$
from a bounded closed interval $[a,b]$
with $\gamma(a) = p$ and $\gamma(b) = q$.
A geodesic $\gamma \colon I \to X$ is called 
a \emph{ray} 
if $I = [0,\infty)$,
and $\gamma$ is called a \emph{line} if $I = \R$.

For $r \in (0,\infty]$,
a metric space $X$ is said to be
\emph{$r$-geodesic}
if every pair of points in $X$ with distance smaller than $r$
can be joined by a geodesic in $X$.
A metric space is 
\emph{geodesic}
if it is $\infty$-geodesic.
A metric space is 
\emph{proper}
if every closed bounded subset is compact.
A geodesic metric space is proper
if and only if
it is complete and locally compact.

For $r \in (0,\infty]$,
a subset $C$ of a metric space is said to be 
\emph{$r$-convex}
if $C$ itself is $r$-geodesic as a metric subspace, 
and if every geodesic joining two points in $C$
is contained in $C$.
A subset $C$ of a metric space is 
\emph{convex}
if $C$ is $\infty$-convex.

\subsection{Gromov--Hausdorff topology}

As mentioned in Section 1,
we denote by $d_{\GH}$ the Gromov--Hausdorff distance 
between metric spaces.
If for $c \in (0,\infty)$ two metric spaces $X$ and $Y$
satisfy $d_{\GH}(X,Y) < c$,
then there exists a $2c$-approximation between $X$ and $Y$.
We say that a sequence $(X_k)$ of metric spaces 
converges to a metric space $X$ 
\emph{in the Gromov--Hausdorff topology}
if $\lim_{k \to \infty} d_{\GH}(X_k,X) = 0$.

A sequence $(X_k,p_k)$ of pointed geodesic metric spaces 
converges to a pointed metric space $(X,p)$
\emph{in the pointed Gromov--Hausdorff topology}
if for every $r \in (0,\infty)$
there exists a sequence $(\epsilon_k)$ in $(0,\infty)$
with $\lim_{k \to \infty}\epsilon_k = 0$
such that for each $k \in \N$
there exists an
$\epsilon_k$-approximation $\varphi_k \colon B_r(p) \to B_r(p_k)$
with $\varphi_k(p) = p_k$;
in this case,
we write $(X,p) = \lim_{k \to \infty} (X_k,p_k)$.
If a sequence $(X_k,p_k)$ of pointed proper geodesic metric spaces 
converges to a pointed metric space $(X,p)$
in the pointed Gromov--Hausdorff topology,
then $X$ is proper and geodesic.

A \emph{non-principal ultrafilter $\omega$ on $\N$}
is a finitely additive probability measure on $\N$ such that
$\omega(A) \in \{0,1\}$ for every subset $A$ of $\N$,
and $\omega(E) = 0$ if $E$ is a finite subset.
For a subset $A$ of $\N$ with $\omega(A) = 1$,
each $k \in A$ is said to be \emph{$\omega$-large}.
If $(x_k)$ is a sequence in a compact metric space $B$,
then there exists a unique point $x$ in $B$ such that
for every open neighborhood $U$ of $x$
we have 
$\omega \left( \left\{ \, k \in \N \mid x_k \in U \, \right\} \right) = 1$;
the unique point $x$ is called the \emph{$\omega$-limit of $(x_k)$},
and denoted by $\ulim x_k$.

Throughout this paper,
we fix a non-principal ultrafilter $\omega$ on $\N$.
Let $(X_k,p_k)$ be a sequence of pointed metric spaces
with metrics $d_{X_k}$.
We denote by $X_{\omega}^0$
the set of all sequences $(x_k)$
with $x_k \in X_k$ 
such that
$d_{X_k}(p_k,x_k)$ are uniformly bounded.
We denote by 
$d_{\omega}$ the function on 
$X_{\omega}^0 \times X_{\omega}^0$
defined by
$d_{\omega}((x_k),(y_k)) := \ulim d_{X_k}(x_k,y_k)$.
We define 
$\ulim (X_k,p_k)$ as the quotient
metric space $(X_{\omega}^0,d_{\omega}) / d_{\omega}=0$,
called the \emph{ultralimit of $(X_k,p_k)$ 
with respect to $\omega$}.
For $\xi \in \ulim X_k$,
if $\xi = [(x_k)]$,
then we write $x_k \to \xi$.

Let $(X_k,p_k)$ be a sequence of pointed metric spaces.
Then $\ulim X_k$ is complete.
If $\diam X_k$ are uniformly bounded,
then $\ulim X_k$ does not depend on the choices of
$p_k$.
If each $X_k$ is $r$-geodesic,
then so is $\ulim X_k$.
If each $X_k$ is geodesically complete,
then so is $\ulim X_k$.
If a sequence $(X_k)$ of compact metric spaces 
converges to some compact metric space $X$ 
in the Gromov--Hausdorff topology,
then $\ulim X_k$ is isometric to $X$.
If a sequence $(X_k,p_k)$ of pointed proper geodesic metric spaces 
converges to a pointed metric space $(X,p)$
in the pointed Gromov--Hausdorff topology,
then $\ulim X_k$ is isometric to $X$ and 
$p_k \to p$.

Let $(\varphi_k)$ be a sequence of pointed $c$-Lipschitz maps
from $(X_k,p_k)$ to $(Y_k,\varphi_k(p_k))$
between pointed metric spaces.
The \emph{ultralimit $\ulim \varphi_k$ of $(\varphi_k)$}
is defined as a map from 
from $\ulim (X_k,p_k)$ to $\ulim (Y_k,\varphi_k(p_k))$
uniquely determined by
$\left( \ulim \varphi_k \right) \left( [x_k] \right)
:= \left[ \left( \varphi_k(x_k) \right) \right]$.
In this case,
the ultralimit $\ulim \varphi_k$ is $c$-Lipschitz.
If in addition each $\varphi_k$ is $c$-open,
then $\ulim \varphi_k$ is $c$-open too.

\subsection{CAT$\boldsymbol{(\kappa)}$ spaces}

For $\kappa \in \R$,
we denote by $M_{\kappa}^n$ 
the simply connected, complete Riemannian $n$-manifold 
of constant curvature $\kappa$,
and denote by $D_{\kappa}$ the diameter of $M_{\kappa}^n$.
A complete metric space $X$ is said to be 
$\CAT(\kappa)$
if $X$ is $D_{\kappa}$-geodesic,
and if every geodesic triangle in $X$ with perimeter 
smaller than $2D_{\kappa}$
is not thicker than the comparison triangle 
with the same side lengths in $M_{\kappa}^2$.
In this paper,
all $\CAT(\kappa)$ spaces are assumed to be complete.
A metric space is said to be $\CBA$
if for some $\kappa \in \R$
it is locally $\CAT(\kappa)$.

Let $X$ be a $\CAT(\kappa)$ space.
Every pair of points in $X$ with distance smaller than $D_{\kappa}$
can be uniquely joined by a geodesic.
Let $p \in X$.
For every $r \in (0,D_{\kappa}/2]$,
the balls $U_r(p)$ and $B_r(p)$ are convex.
For every $r \in (0,D_{\kappa})$,
the balls $U_r(p)$ and $B_r(p)$ are contractible inside themselves.
For $x, y \in U_{D_{\kappa}}(p) - \{p\}$,
we denote by $\angle_p(x,y)$
the angle at $p$
between $px$ and $py$.
Put $\Sigma_p'X := \{ \, px \mid x \in U_{D_{\kappa}}(p) - \{p\} \, \}$.
The angle $\angle_p$ at $p$
is a pseudo-metric on $\Sigma_p'X$.
The 
\emph{space of directions $\Sigma_pX$ at $p$ in $X$}
is defined as the $\angle_p$-completion of
the quotient metric space $\Sigma_p'X / \angle_p = 0$.
For $x \in U_{D_{\kappa}}(p) - \{p\}$,
we denote by $x_p' \in \Sigma_pX$
the starting direction of $px$ at $p$.
The 
\emph{tangent space $T_pX$ at $p$ in $X$} 
is defined as $C_0(\Sigma_pX)$.
The space $\Sigma_pX$ is $\CAT(1)$,
and the space $T_pX$ is $\CAT(0)$.
In fact,
for a metric space $Z$,
the Euclidean cone $C_0(Z)$ 
is $\CAT(0)$ if and only if $Z$ is $\CAT(1)$.

\subsection{Ideal boundaries of \boldmath$\CAT(0)$ spaces}

Let $X$ be a metric space with metric $d_X$.
Two rays $\gamma_1, \gamma_2 \colon [0,\infty) \to X$
are said to be
\emph{asymptotic}
if $\sup_{t \in [0,\infty)} d_X(\gamma_1(t),\gamma_2(t))$
is finite.
The asymptotic relation gives an equivalence relation
on the set of all rays in $X$.
The 
\emph{ideal boundary $\partial_{\infty}X$ of $X$}
is defined as the set of all asymptotic equivalence classes of rays in $X$.
For a ray $\gamma$ in $X$,
we denote by $\gamma(\infty)$
the asymptotic equivalent class of $\gamma$ in $\partial_{\infty}X$.

Let $X$ be a $\CAT(0)$ space.
For every $p \in X$, and for every $\xi \in \partial_{\infty}X$,
there exists a unique ray $\gamma \colon [0,\infty) \to X$
with $\gamma(0) = p$ and $\gamma(\infty) = \xi$.
For $p \in X$ and $\xi \in \partial_{\infty}X$,
we denote by $\gamma_{p\xi}$
the unique ray emanating from $p$ to $\xi$,
by $p\xi$ the image of $\gamma_{p\xi}$,
and by $\xi_p' \in \Sigma_pX$ the starting direction of $p\xi$ at $p$.
For $p \in X$, and $\xi, \eta \in \partial_{\infty}X$,
we denote by $\angle_p(\xi,\eta)$
the angle at $p$
between $p\xi$ and $p\eta$.
The 
\emph{angle metric $\angle$ on $\partial_{\infty}X$}
is defined by
$\angle(\xi,\eta) := \sup_{p \in X} 
\angle_p(\xi,\eta)$.
The 
\emph{Tits metric $d_{\T}$ on $\partial_{\infty}X$}
is defined as the length metric on $\partial_{\infty}X$ 
induced from $\angle$.
Notice that $\angle = \min \{ d_{\T}, \pi \}$,
and $d_{\T}$ possibly takes the value $\infty$.
We denote by $\partial_{\T}X$
the ideal boundary $\partial_{\infty}X$
equipped with the Tits metric $d_{\T}$,
and call it the
\emph{Tits boundary of $X$}.
Then $\partial_{\T}X$ is a $\CAT(1)$ space.
The Euclidean cone $C_0(\partial_{\T}X)$ 
is isometric to the Euclidean cone $C_0(\partial_{\infty}X)$ over 
the ideal boundary $\partial_{\infty}X$ with the angle metric $\angle$.

\subsection{Geodesically complete CAT$\boldsymbol{(\kappa)}$ spaces}

A $\CAT(\kappa)$ space is said to be 
\emph{locally geodesically complete}
(or has \emph{geodesic extension property})
if every geodesic defined on a compact interval can be extended to
a local geodesic beyond endpoints.
A $\CAT(\kappa)$ space is 
\emph{geodesically complete}
if every geodesic can be extended to a local geodesic defined on $\R$.
Every locally geodesically complete $\CAT(\kappa)$ space
is geodesically complete.
The geodesical completeness for 
$\CAT(\kappa)$ spaces
is preserved under the ultralimit.

We refer the readers to \cite{lytchak-nagano1} for 
the basic properties of $\GCBA$ spaces, that is,
locally compact, separable, locally geodesically complete
metric spaces with an upper curvature bound.
Let $X$ be a proper, geodesically complete $\CAT(\kappa)$ space.
For every $p \in X$,
the space $\Sigma_pX$ is compact and geodesically complete,
and $T_pX$ is proper and geodesically complete.
In fact,
for a $\CAT(1)$ space $Z$,
the Euclidean cone $C_0(Z)$ is geodesically complete
if and only if $Z$ is geodesically complete
and not a singleton.

\subsection{Dimensions of CAT$\boldsymbol{(\kappa)}$ spaces}

Kleiner \cite{kleiner} formulated
the \emph{geometric dimension} $\dim_{\mathrm{G}}$ 
for $\CBA$ spaces
as
the smallest function from the class of $\CBA$ spaces
to $\N \cup \{\infty\}$
such that
(1) $\dim_{\mathrm{G}} X = 0$ if and only if $X$ is discrete;
(2) $\dim_{\mathrm{G}} X \ge 1 + \dim_{\mathrm{G}} \Sigma_pX$ 
for all $p \in X$.
In this case,
we have
\[
\dim_{\mathrm{G}} X = 1 + \sup_{p \in X} \dim_{\mathrm{G}} \Sigma_pX.
\]
We quote the theorem of Kleiner \cite[Theorem A]{kleiner}
in the following form:

\begin{thm}
\label{thm: dimgdimt}
\emph{(\cite{kleiner})} 
Let $X$ be a complete $\CBA$ space.
Then $\dim_{\mathrm{G}} X$ coincides the supremum
of covering dimensions $\dim C$ of compact subsets $C$ of $X$;
if in addition $X$ is separable,
then $\dim_{\mathrm{G}} X = \dim X$.
\end{thm}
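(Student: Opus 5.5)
We will prove the two inequalities between $\dim_{\mathrm{G}} X$ and $\sup\{\dim C : C \subset X \text{ compact}\}$ separately, each by induction on $n$, and then treat the separable case. Throughout, the only structure we use is local: $X$ is locally $\CAT(\kappa)$, and all arguments live in small convex balls $B_r(p)$ with $r < D_{\kappa}/2$. In these balls we have unique geodesics, the logarithm map $\log_p\colon B_r(p)\to T_pX$, and geodesic contractions.

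\emph{Lower bound.} Suppose $\dim_{\mathrm{G}} X \ge n$. We show that some compact subset has covering dimension $\ge n$, and we will in fact prove the stronger homological statement: there exist open sets $V \subset U$ with $H_n(U,V)\neq 0$. By the formula $\dim_{\mathrm{G}} X = 1+\sup_p \dim_{\mathrm{G}}\Sigma_pX$, there is a point $p$ with $\dim_{\mathrm{G}}\Sigma_pX \ge n-1$. By induction, applied to the $\CAT(1)$ space $\Sigma_pX$, there is a nontrivial relative class of degree $n-1$ in $\Sigma_pX$. Coning it off gives a nontrivial degree-$n$ relative class in $T_pX = C_0(\Sigma_pX)$. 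We then transport this class to small balls around $p$. The rescaled pointed balls $(\lambda B_{r}(p),p)$ have an ultralimit containing $T_pX$ isometrically. Using the $1$-Lipschitz retraction given by geodesic contraction toward $p$, finite configurations of directions can be realized by actual geodesic segments in $X$ with angle errors $\to 0$. We then show that a nontrivial cycle in $T_pX$ yields a nontrivial cycle in $B_r(p)$ for small $r$. Finally, a compact set carrying nonzero $H_n$ relative to a compact subset has covering dimension $\ge n$ by standard dimension theory (via Čech cohomology and the support of the cycle). The base case $n=0$, where $X$ is not discrete, is trivial because an infinite compact set or a nondegenerate arc exists.

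\emph{Upper bound.} Suppose $\dim_{\mathrm{G}} X \le n$, and let $C$ be compact. We show $\dim C \le n$ by producing, for every $\epsilon>0$, an $\epsilon$-map from $C$ to a polyhedron of dimension $\le n$. Cover $C$ by finitely many small balls $B_r(p_i)$. In each ball, use the "polar coordinates" $x\mapsto (d(p_i,x),\, x'_{p_i})$ into $C_0(\Sigma_{p_i}X)$. The distance coordinate is $1$-Lipschitz, and by induction the direction coordinate lands in a space of geometric dimension $\le n-1$. The spheres $S_t(p_i)\cap C$ are then controlled via their own spaces of directions, whose dimension drops by one, and the product-with-interval estimate gives $\dim \le n$ locally. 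The sum theorem for closed sets then gives $\dim C\le n$.

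\emph{Main obstacle.} The main obstacle is that $\log_{p}$ is not injective, so directions alone do not yield $\epsilon$-maps. Two points at the same distance from $p$ in the same direction coincide, but nearby directions may correspond to far-apart points after branching. To handle this, we will first try to replace single base points by many base points: consider the map $x\mapsto (d(q_1,x),\dots,d(q_N,x))$ for a fine net $\{q_j\}$, show that on each small piece its fibers are small, and show that its image near a point lies in a set of dimension $\le n$ by the inductive hypothesis on $\Sigma_{q_j}X$.

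\emph{Separable case.} For separable $X$ we need $\dim X \le \sup_C \dim C$, since the reverse inequality is monotonicity of covering dimension. A separable complete $\CBA$ space is a countable union of closed balls $B_r(p_k)$ of small radius. The homological and $\epsilon$-map arguments above apply verbatim to each such closed convex ball, since only compactness of finite configurations was used, so each ball has $\dim \le n$. The countable sum theorem for closed subsets of separable metric spaces then yields $\dim X \le n$.
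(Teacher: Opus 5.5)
The paper gives no proof of this statement; it is quoted from Kleiner \cite{kleiner}. Your proposal therefore has to stand on its own, and it has a genuine gap in the half that carries the real content. The inequality $\dim C\le \dim_{\mathrm{G}}X$ for compact $C$ (your upper bound) is not established. Since $\log_p$ is not injective, knowing that the polar-coordinate image of $C\cap B_r(p)$ lies in $[0,r]\times\Sigma_pX$, whose compacta have dimension $\le n$ by induction, gives no bound on $\dim C$. The natural reduction is Hurewicz's theorem for the closed map $d_p|_C$, namely $\dim(C\cap B_r(p))\le 1+\sup_t\dim(C\cap S_t(p))$. So you must show that compact subsets of small metric spheres have dimension $\le n-1$. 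But spheres are not $\CBA$ spaces, so ``their own spaces of directions'' and your inductive hypothesis do not apply to them. The fibres of $x\mapsto x'_p$ on $S_t(p)$ are not controlled either: geodesics from $p$ with zero angle need not share an initial segment. For example, take the two copies of the segment from $(0,0)$ to $(t,0)$ in two planes glued along the convex set $\{y\ge x^2\}$. Your proposed repair via $x\mapsto(d(q_1,x),\dots,d(q_N,x))$ only relocates the problem. If its fibres are small it is an $\epsilon$-map, and showing that its image in $\R^N$ has dimension $\le n$ for small $\epsilon$ is at least as strong as $\dim C\le n$. Nothing you know about $\Sigma_{q_j}X$ bears on that image.

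The separable case inherits this gap and adds another. The countable sum theorem needs $\dim B_r(p_k)\le n$ for \emph{non-compact} closed balls, whereas your upper-bound sketch begins by covering $C$ with finitely many balls, so it does not apply verbatim. Passing from compacta to separable spaces is not a formality either: complete Erd\H{o}s space is a separable complete metric space of dimension $1$ all of whose compact subsets are zero-dimensional. So the $\CBA$ structure has to be used here as well.

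Your lower bound, by contrast, is workable once the detecting map is made explicit. Realize a finely subdivided relative cycle of $T_pX$ by geodesic simplices on points of $X$ at a tiny scale $s$; with finitely many vertices, rescaled distances approximate cone distances. Then map back by $s^{-1}\log_p$, which is continuous on a small ball and sends the realized cycle uniformly close to the original one. It is therefore homotopic to it as a map of pairs via geodesics in $T_pX$, and the preimage of the open pair under $s^{-1}\log_p$ is the required open pair in $X$. In your last step the class must be nonzero in $H_n(U,V)$ (with field coefficients) for an open pair of the locally contractible space $X$, where singular and \v{C}ech cohomology agree. A compact set merely ``carrying'' a nonzero singular class does not suffice, since $2$-dimensional compacta can have nonzero singular homology in infinitely many degrees (Barratt--Milnor).
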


Let $\ulim (X_k,p_k)$ be the ultraimit of a sequence of 
pointed $\CAT(\kappa)$ spaces.
Then we have the following (\cite[Lemma 11.1]{lytchak2}):
\begin{equation}
\dim_{\mathrm{G}} \left( \ulim X_k \right) 
\le \ulim \dim_{\mathrm{G}} X_k.
\label{eqn: lscdim}
\end{equation}

Let $X$ be a $\GCBA$ space.
Every relatively compact open subset of $X$
has finite covering dimension
(see \cite[Subsection 5.3]{lytchak-nagano1}).
The covering dimension $\dim X$ 
is equal to the Hausdorff dimension of $X$,
and equal to the supremum of $m$
such that $X$ has an open subset $U$ homeomorphic to $\R^m$
(\cite[Theorem 1.1]{lytchak-nagano1}).

\section{Almost spherical suspension structure}

\subsection{Multi-fold suspenders in CAT(1) spaces}

Let $Z$ be a $\CAT(1)$ space with metric $d_Z$.
We say that 
a point $p_0$ in $Z$ is a \emph{suspender} in $Z$
if there exists another point $q_0$ in $Z$
such that for every $z \in Z$ we have
\begin{equation}
d_Z(p_0,z) + d_Z(z,q_0) = \pi.
\label{eqn: sph}
\end{equation}
In this case,
we say that $p_0$ and $q_0$ are \emph{opposite} to each other.
By \eqref{eqn: sph},
if $p_0$ and $q_0$ are mutually opposite suspenders in $Z$,
then $d_Z(p_0,q_0) = \pi$.

We say that an $m$-tuple $(p_1,\dots,p_m)$ of points in $Z$ is
an \emph{$m$-suspender} in $Z$
if there exists another $m$-tuple $(q_1,\dots,q_m)$ of points in $Z$
satisfying the following properties:
\begin{enumerate}
\item
for all $i \in \{ 1, \dots, m \}$,
the points $p_i$ and $q_i$ 
are mutually opposite suspenders in $Z$;
\item
for all distinct $i, j \in \{1, \dots, m\}$,
we have
$d_Z(p_i,p_j) = \pi/2$, 
$d_Z(p_i,q_j) = \pi/2$, 
and $d_Z(q_i,q_j) = \pi/2$.
\end{enumerate}
In this case, we say that $(p_1,\dots,p_m)$ and 
$(q_1,\dots,q_m)$ are \emph{opposite} to each other.

For a point $z$ in $Z$,
we say that
a point $\bar{z}$ in $Z$ is an \emph{antipode of $z$}
if we have $d_Z(z,\bar{z}) \ge \pi$.
We denote by $\Ant(z)$
the set of all antipodes of $z$,
and call it the \emph{antipodal set of $z$}.
We say that
a subset $A$ of $Z$ is
\emph{symmetric}
if $A$ contains $\bigcup_{p \in A} \Ant(p)$.

We say that a symmetric subset $A$ of $Z$ is 
\emph{round}
if every point in $A$ is a suspender in $Z$.
Notice that if $Z$ admits a round subset,
then $\diam Z = \pi$.
A subset $A$ of $Z$ is round
if and only if
for each $p \in A$
there exists a unique point $q$ in $A$ such that
$p$ and $q$ are mutually opposite suspenders in $Z$.

For a subset $A$ of $Z$,
we denote by $A^{\perp}$
the set of all points $z$ in $Z$ with $d_Z(z,A) \ge \pi/2$,
and call it the \emph{polar set of $A$}.

From the existence of an $m$-suspenders,
we find an $m$-fold spherical suspension structure
for $\CAT(1)$ spaces
(\cite[Proposition 3.7]{nagano6}).

\begin{prop}\label{prop: m0susp}
Let $Z$ be a $\CAT(1)$ space.
Assume that there exist mutually opposite $m$-suspenders
$(p_1,\dots,p_m)$ and $(q_1,\dots,q_m)$ in $Z$.
Then there exists 
a round, closed $\pi$-convex subset $C_{m-1}$ of $Z$
containing $(p_1,\dots,p_m)$ and $(q_1,\dots,q_m)$
such that $C_{m-1}$ is isometric to $\Sph^{m-1}$.
Moreover,
$Z$ isometrically splits as a spherical join
$C_{m-1} \ast C_{m-1}^{\perp}$,
where $C_{m-1}^{\perp}$ is the polar set of $C_{m-1}$.
\end{prop}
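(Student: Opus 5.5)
The plan is induction on $m$, using the classical single-suspender splitting as the base step. In a $\CAT(1)$ space, if $p_0,q_0$ are mutually opposite suspenders, then $Z$ is isometric to the spherical join $\{p_0,q_0\}\ast\{p_0,q_0\}^{\perp}$. Here $\{p_0,q_0\}^{\perp}$ coincides with the ``equator'' $E_0:=\{z : d_Z(p_0,z)=\pi/2\}$.

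The base step goes as follows.
\begin{enumerate}
\item For $z\neq p_0,q_0$, the equality $d(p_0,z)+d(z,q_0)=\pi$ shows that $p_0z\cup zq_0$ is a geodesic of length $\pi$. So every point lies on a half-great-circle from $p_0$ to $q_0$.
\item Each such meridian meets $E_0$ in a single point. We parametrize $z$ by $(t,e)$ with $t=d(p_0,z)$ and $e\in E_0$.
\item To show the join formula for $d(z,z')$, we use the $\CAT(1)$ comparison on triangles $p_0 z z'$ and $q_0 z z'$ (perimeters $<2\pi$ after small perturbation). Together with the fact that the two comparison angles at $p_0$ and at $q_0$ must coincide, because both equal the angle between meridians, this forces equality in the comparison. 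A rigidity (flat-triangle) lemma for $\CAT(1)$ spaces then gives spherical triangles, which yields the spherical law of cosines in join coordinates.
\item The same argument shows $E_0$ is $\pi$-convex and is itself $\CAT(1)$.
\end{enumerate}

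For the inductive step, suppose the statement holds for $m-1$. Apply the base step to $(p_m,q_m)$ to get $Z=\{p_m,q_m\}\ast E_m$. The conditions $d(p_i,p_m)=d(p_i,q_m)=\pi/2$ and the same for $q_i$ put $p_i,q_i\in E_m$ for $i<m$. We then check that $p_i,q_i$ remain mutually opposite suspenders inside $E_m$: for $e\in E_m$, the sum $d(p_i,e)+d(e,q_i)=\pi$ holds in $Z$, and $E_m$ is isometrically embedded, since the join metric restricted to the equator is the intrinsic one truncated at $\pi$. The pairwise $\pi/2$ conditions are inherited.

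By induction, $E_m=C_{m-2}\ast C_{m-2}^{\perp_{E_m}}$ with $C_{m-2}\cong\Sph^{m-2}$ containing $p_i,q_i$ for $i<m$. Associativity of spherical joins gives
\[
Z=\{p_m,q_m\}\ast C_{m-2}\ast C' ,
\]
where $C'=C_{m-2}^{\perp_{E_m}}$. We set $C_{m-1}:=\{p_m,q_m\}\ast C_{m-2}\cong\Sph^0\ast\Sph^{m-2}=\Sph^{m-1}$.

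It remains to verify the required properties of $C_{m-1}$.
\begin{itemize}
\item $C'$ equals the polar set $C_{m-1}^{\perp}$ in $Z$, which follows directly from the join metric.
\item $C_{m-1}$ is closed and $\pi$-convex, being a join factor in a spherical join.
\item $C_{m-1}$ is round: every point of $C_{m-1}$ is a suspender of $Z$, with its antipode in the sphere as the opposite point, by the join structure $Z=C_{m-1}\ast C'$. Moreover, $C_{m-1}$ is symmetric, because in such a join the points at distance $\geq\pi$ from a point $x\in C_{m-1}$ reduce to its antipode in $C_{m-1}$.
\end{itemize}

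The main obstacle is step 3 of the base step: the rigidity argument establishing the join metric. The delicate points are uniqueness of geodesics at distance exactly $\pi$ and degenerate triangles of perimeter $2\pi$. If needed, I would handle these by approximating from inside and passing to limits via continuity of distance.
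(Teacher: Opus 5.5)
The paper gives no proof of this proposition; it simply quotes \cite[Proposition 3.7]{nagano6}. Your self-contained induction is therefore an alternative to a citation, not to an argument in the text.

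The inductive part of your proposal is sound. Grant the one-suspender splitting $Z=\{p_0,q_0\}\ast E_0$ with $E_0$ closed, $\pi$-convex and $\CAT(1)$. For the $\pi$-convexity, compare the triangle $p_0ee'$: $\bar p_0$ is a pole of the great circle through $\bar e,\bar e'$, so $d(p_0,\cdot)\le\pi/2$ along $ee'$. Likewise $d(q_0,\cdot)\le\pi/2$ there, and since the two distances sum to $\pi$, both equal $\pi/2$. Given this, the following steps are all correct:
\begin{itemize}
\item the passage to $E_m$ and the inheritance of the suspender and $\pi/2$ conditions;
\item associativity of joins;
\item the identification $C'=C_{m-1}^{\perp}$;
\item roundness and symmetry of $C_{m-1}$.
\end{itemize}

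The gap is in step 3, which carries the whole content of the base case. The mechanism you describe forces nothing. Write $s=d(p_0,z)$, $s'=d(p_0,z')$, $d=d(z,z')$. The comparison triangles of $p_0zz'$ and $q_0zz'$ have sides $(s,s',d)$ and $(\pi-s,\pi-s',d)$, so they fit together into a spherical lune. Their angles at $\bar p_0$ and $\bar q_0$ therefore agree automatically, by the spherical law of cosines alone. The Alexandrov angles $\angle_{p_0}(z,z')$ and $\angle_{q_0}(z,z')$ are only bounded above by this common value. Calling them ``the angle between meridians'' does not show that they attain it; their equality is a consequence, not an input.

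The missing idea is to look at the vertex $z$ instead.
\begin{itemize}
\item Since $z$ is an interior point of the geodesic $p_0zq_0$, you have $\angle_z(p_0,z')+\angle_z(z',q_0)\ge\pi$.
\item In the lune, the two comparison angles at $\bar z$ sum to exactly $\pi$.
\item Each Alexandrov angle is at most its comparison angle, so equality holds at $z$.
\item The flat triangle lemma for $\CAT(1)$ spaces (see \cite{bridson-haefliger}) then makes $p_0zz'$ spherical.
\end{itemize}
Hence $\angle_{p_0}(z,z')$, which depends only on the two meridians, equals the comparison angle. Evaluating at the equator points $e,e'$ identifies this angle with $d(e,e')$, which gives the join formula.

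Both perimeters are $<2\pi$ exactly when $d<\min\{s+s',\,2\pi-s-s'\}$. In the boundary cases no perturbation or limit is needed. There $zp_0z'$ (or $zq_0z'$) is already a geodesic, so the meridians make angle $\pi$ at the pole. Since local geodesics of length $\le\pi$ in a $\CAT(1)$ space are geodesics, $d(e,e')=\pi$, which again agrees with the join formula.
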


\subsection{Relaxed multi-fold suspenders in CAT(1) spaces}

Let $Z$ be a $\CAT(1)$ space with metric $d_Z$.
We say that $Z$ is 
\emph{penetrable}
if there exists a dense subset $Z_0$ of $Z$
such that for each $p \in Z_0$
there exists $q \in Z_0$ with $d_Z(p,q) \ge \pi$.
If $Z$ is penetrable, then $\diam Z \ge \pi$.
If $Z$ has at least two path-connected components,
then it is penetrable.
The penetrability of $\CAT(1)$ spaces is closed 
under the ultralimit.
Notice that if $Z$ is geodesically complete,
then $Z$ is penetrable,
provided $Z$ contains at least two points.

Let $\delta \in (0,1)$.
We say that 
a point $p_0$ in $Z$ is a \emph{$(1,\delta)$-suspender} 
if there exists another point $q_0$ in $Z$
satisfying
\begin{equation}
\sup_{z \in Z} \left\{ \, d_Z(p_0,z) + d_Z(z,q_0) \, \right\} < \pi + \delta.
\label{eqn: dsph}
\end{equation}
In this case,
we say that $p_0$ and $q_0$ are \emph{opposite} to each other.

The notion of the $(1,\delta)$-suspenders
is a perturbed one of the rigid $1$-suspenders
for penetrable $\CAT(1)$ spaces
(\cite[Lemma 4.1]{nagano6}).

\begin{lem}\label{lem: o1dsusp}
Let $Z$ be a penetrable $\CAT(1)$ space with metric $d_Z$.
Let $p_0$ and $q_0$ be mutually opposite 
$(1,\delta)$-suspenders in $Z$.
Then we have
\begin{equation}
\inf_{z \in Z} \left\{ \, d_Z(p_0,z) + d_Z(z,q_0) \, \right\} > \pi - \delta.
\label{eqn: o1dsuspa}
\end{equation}
\end{lem}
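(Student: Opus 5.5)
The plan is to bound $d_Z(p_0,z)+d_Z(z,q_0)$ from below by playing it off against the same quantity at an antipode of $z$. Penetrability supplies such antipodes on a dense set, and the $(1,\delta)$-suspender condition \eqref{eqn: dsph} gives an upper bound at the antipode. The $\CAT(1)$ structure itself should not be needed beyond the triangle inequality.

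Put
\[
s := \sup_{w \in Z} \left\{ \, d_Z(p_0,w) + d_Z(w,q_0) \, \right\}.
\]
By \eqref{eqn: dsph} we have $s < \pi + \delta$, and $s$ is a fixed number. Let $Z_0$ be the dense subset from the definition of penetrability.

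First take $z' \in Z_0$ and choose $w \in Z_0$ with $d_Z(z',w) \ge \pi$. Applying the triangle inequality through $p_0$ and through $q_0$ gives
\[
\pi \le d_Z(z',p_0) + d_Z(p_0,w), \qquad \pi \le d_Z(z',q_0) + d_Z(q_0,w).
\]
Adding these two inequalities yields
\[
2\pi \le \left\{ d_Z(p_0,z') + d_Z(z',q_0) \right\} + \left\{ d_Z(p_0,w) + d_Z(w,q_0) \right\}.
\]
The second bracket is at most $s$, so $d_Z(p_0,z') + d_Z(z',q_0) \ge 2\pi - s$.

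Next pass to an arbitrary $z \in Z$. The function $z \mapsto d_Z(p_0,z)+d_Z(z,q_0)$ is $2$-Lipschitz, hence continuous, and $Z_0$ is dense. Therefore the same lower bound $2\pi - s$ holds for every $z \in Z$. Taking the infimum gives
\[
\inf_{z \in Z} \left\{ \, d_Z(p_0,z) + d_Z(z,q_0) \, \right\} \ge 2\pi - s > 2\pi - (\pi + \delta) = \pi - \delta,
\]
which is \eqref{eqn: o1dsuspa}.

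The only delicate point is keeping the inequality strict. Working with the fixed supremum $s$, rather than with the strict bound at each individual point, makes this automatic, because the final step uses $s < \pi + \delta$ directly. Everything else is routine.
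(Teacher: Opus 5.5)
Your proof is correct. You take an antipode $w$ of a point of the dense set, add the two triangle inequalities through $p_0$ and $q_0$, and pass to all of $Z$ by continuity. This gives $\inf_{z \in Z}\{d_Z(p_0,z)+d_Z(z,q_0)\} \ge 2\pi - s > \pi - \delta$, and working with the fixed supremum $s$ keeps the final inequality strict.

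The paper gives no proof of this lemma; it only quotes \cite[Lemma 4.1]{nagano6}, so there is no in-paper argument to compare against. Your argument is self-contained and actually proves the slightly stronger relation $\inf + \sup \ge 2\pi$ for $z \mapsto d_Z(p_0,z)+d_Z(z,q_0)$. It uses only penetrability and the triangle inequality, with no $\CAT(1)$ geometry.
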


We say that an $m$-tuple $(p_1,\dots,p_m)$ of points in $Z$ is
an \emph{$(m,\delta)$-suspender} 
if there exists another $m$-tuple $(q_1,\dots,q_m)$ of points in $Z$
satisfying the following:
\begin{enumerate}
\item
for all $i \in \{ 1, \dots, m \}$,
the points $p_i$ and $q_i$ 
are mutually opposite $(1,\delta)$-suspenders in $Z$;
\item
for all distinct $i, j \in \{ 1, \dots, m \}$,
we have
\begin{equation}
d_Z(p_i,p_j) < \frac{\pi}{2} + \delta,
\quad
d_Z(p_i,q_j) < \frac{\pi}{2} + \delta,
\quad
d_Z(q_i,q_j) < \frac{\pi}{2} + \delta.
\label{eqn: dortho}
\end{equation}

\end{enumerate}
In this case, we say that $(p_1,\dots,p_m)$ and 
$(q_1,\dots,q_m)$ are \emph{opposite} to each other.

From Lemma \ref{lem: o1dsusp} we can derive the following
(\cite[Lemma 3.3]{nagano6}):

\begin{lem}\label{lem: omdsusp}
Let $Z$ be a penetrable $\CAT(1)$ space with metric $d_Z$.
Let $(p_1,\dots,p_m)$ and $(q_1,\dots,q_m)$ be mutually opposite 
$(m,\delta)$-suspenders in $Z$.
Then the following hold:
\begin{enumerate}
\item
for all $i \in \{ 1, \dots, m \}$
we have
\begin{equation}
\inf_{z \in Z} \left\{ \, d_Z(p_i,z) + d_Z(z,q_i) \, \right\} > \pi - \delta;
\label{eqn: omdsuspa}
\end{equation}
\item
for all distinct $i, j \in \{ 1, \dots, m \}$, 
we have
\begin{equation}
d_Z(p_i,p_j) > \frac{\pi}{2} - 2\delta,
\quad
d_Z(p_i,q_j) > \frac{\pi}{2} - 2\delta,
\quad
d_Z(q_i,q_j) > \frac{\pi}{2} - 2\delta.
\label{eqn: omdsuspb}
\end{equation}
\end{enumerate}
\end{lem}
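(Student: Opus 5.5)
Part (1) is immediate: for each $i$, the points $p_i$ and $q_i$ are mutually opposite $(1,\delta)$-suspenders in the penetrable $\CAT(1)$ space $Z$. Lemma \ref{lem: o1dsusp} then gives \eqref{eqn: omdsuspa} directly.

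For part (2), I will combine the upper bounds \eqref{eqn: dortho} with the lower bound \eqref{eqn: omdsuspa}. Fix distinct $i,j$. Apply \eqref{eqn: omdsuspa} for the index $i$ to the test point $z=p_j$:
\[
d_Z(p_i,p_j) + d_Z(p_j,q_i) > \pi - \delta .
\]
By \eqref{eqn: dortho} we have $d_Z(p_j,q_i) = d_Z(q_i,p_j) < \pi/2+\delta$. Here I use the hypothesis $d_Z(p_i,q_j)<\pi/2+\delta$ with the roles of $i$ and $j$ swapped, which is allowed since the condition holds for all ordered pairs of distinct indices. Subtracting gives
\[
d_Z(p_i,p_j) > \pi - \delta - \Bigl(\frac{\pi}{2}+\delta\Bigr) = \frac{\pi}{2} - 2\delta .
\]

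The other two inequalities follow the same way. For $d_Z(p_i,q_j)$, take $z=q_j$ in \eqref{eqn: omdsuspa} for index $i$, and bound $d_Z(q_j,q_i)<\pi/2+\delta$. For $d_Z(q_i,q_j)$, take $z=q_j$ again, now bounding $d_Z(p_i,q_j)<\pi/2+\delta$ and solving for $d_Z(q_j,q_i)$. The argument has no genuine obstacle once Lemma \ref{lem: o1dsusp} is available; the only care needed is matching the correct cross-distance from \eqref{eqn: dortho} in each case.
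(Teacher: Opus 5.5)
Your proof is correct and follows the route the paper indicates ("from Lemma \ref{lem: o1dsusp} we can derive"; the paper itself only cites \cite[Lemma 3.3]{nagano6} and gives no written proof). Part (1) is Lemma \ref{lem: o1dsusp} applied to each opposite pair $(p_i,q_i)$. Part (2) evaluates \eqref{eqn: omdsuspa} at $z=p_j$ or $z=q_j$ and subtracts the matching upper bound from \eqref{eqn: dortho}, and you paired each case with the right cross-distance.
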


\subsection{Dimensions versus relaxed multi-fold suspenders}

We already know a relation between 
the existence of relaxed multi-fold suspenders and the dimensions
(\cite[Lemma 4.9]{nagano6}).

\begin{lem}\label{lem: capmdsusp}
For every $n \in \N$
there exists $\delta \in (0,1)$
such that
if a penetrable $\CAT(1)$ space $Z$ 
of $\dim_{\mathrm{G}} Z \le n-1$
admits an $(m,\delta)$-suspender,
then $m \le n$. 
\end{lem}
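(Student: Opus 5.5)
We argue by contradiction and compactness via ultralimits. Suppose the statement fails for some $n$. Then for each $k \in \N$ there is a penetrable $\CAT(1)$ space $Z_k$ with $\dim_{\mathrm{G}} Z_k \le n-1$ admitting an $(m_k,1/k)$-suspender with $m_k \ge n+1$. Since an $(m,\delta)$-suspender restricts to an $(m',\delta)$-suspender for $m' \le m$ (keep only the first $m'$ pairs), we may assume $m_k = n+1$ for all $k$. So we have opposite tuples $(p_1^k,\dots,p_{n+1}^k)$ and $(q_1^k,\dots,q_{n+1}^k)$ in $Z_k$ with $\delta_k = 1/k \to 0$.

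We pass to the ultralimit $Z := \ulim (Z_k,p_1^k)$. Distances from $p_1^k$ to every point of $Z_k$ are bounded by $\pi+1$, because $d(p_1^k,z)+d(z,q_1^k) < \pi+\delta_k$. Hence the whole of $Z_k$ enters the limit, and $Z$ is a $\CAT(1)$ space (the ultralimit of $\CAT(1)$ spaces is $\CAT(1)$, being complete and $\pi$-geodesic with the comparison property passing to limits). By the closedness of penetrability under ultralimits, $Z$ is penetrable. By \eqref{eqn: lscdim}, $\dim_{\mathrm{G}} Z \le n-1$.

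Let $p_i := [(p_i^k)]$ and $q_i := [(q_i^k)]$. Every $z \in Z$ is represented by some $(z_k)$, so passing to the limit in \eqref{eqn: dsph} gives $d(p_i,z)+d(z,q_i) \le \pi$. Lemma \ref{lem: omdsusp}(1) gives $d(p_i,z)+d(z,q_i) \ge \pi$ in the limit, so equality \eqref{eqn: sph} holds and $p_i,q_i$ are opposite suspenders. Similarly, \eqref{eqn: dortho} together with Lemma \ref{lem: omdsusp}(2) yields the exact distance $\pi/2$ for all mixed pairs. Thus $(p_1,\dots,p_{n+1})$ and $(q_1,\dots,q_{n+1})$ are mutually opposite $(n+1)$-suspenders in $Z$.

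By Proposition \ref{prop: m0susp}, $Z$ contains a closed subset isometric to $\Sph^{n}$. This subset is compact with covering dimension $n$, so Theorem \ref{thm: dimgdimt} gives $\dim_{\mathrm{G}} Z \ge n$, which contradicts $\dim_{\mathrm{G}} Z \le n-1$. Hence some $\delta = 1/k$ works.

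The main point needing care is the limit step, namely making sure the relaxed inequalities become exact equalities. This is exactly where penetrability, through Lemma \ref{lem: omdsusp}, supplies the lower bounds. A secondary point is checking that the hypotheses (CAT(1), penetrable, dimension bound) pass to the ultralimit, which the preliminaries provide.
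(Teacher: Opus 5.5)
Your proof is correct. The paper gives no proof of this lemma; it quotes it from \cite[Lemma 4.9]{nagano6}. Your argument is the same compactness-and-rigidity scheme the paper uses to prove Lemma \ref{lem: aperp} and describes for Lemma \ref{lem: fullsusp}: pass to the ultralimit, bound the dimension by \eqref{eqn: lscdim}, use Lemma \ref{lem: omdsusp} to upgrade the relaxed suspender to a rigid $(n+1)$-suspender, then apply Proposition \ref{prop: m0susp} and Theorem \ref{thm: dimgdimt} to force $\dim_{\mathrm{G}} \ge n$. The only superfluous step is checking that the limit is penetrable, since you apply Lemma \ref{lem: omdsusp} in the $Z_k$ and Proposition \ref{prop: m0susp} does not require penetrability.
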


The following can be shown by an ultralimit argument
combined with Proposition \ref{prop: m0susp}
and Lemma \ref{lem: omdsusp}
(\cite[Lemma 4.10]{nagano6}).

\begin{lem}\label{lem: fullsusp}
For every $n \in \N$, and for every $c \in [1,\infty)$,
there exists $\delta \in (0,1)$
such that
if a penetrable $\CAT(1)$ space $Z$ 
of $\dim_{\mathrm{G}} Z \le n-1$
admits an $(n,\delta)$-suspender $(p_1,\dots,p_n)$,
then no point $z$ in $Z$
satisfies 
\[
\max_{i \in \{ 1, \dots, n \}} 
\left\vert d_Z \left( p_i, z \right) - \frac{\pi}{2} \right\vert < c\delta.
\]
\end{lem}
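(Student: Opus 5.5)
We argue by contradiction with an ultralimit, in the way the statement itself suggests. Fix $n$ and $c$, and suppose no $\delta$ works. Then for each $k \in \N$, taking $\delta_k := 1/k$ (and $k \ge 2$ so that $\delta_k \in (0,1)$), there are a penetrable $\CAT(1)$ space $Z_k$ with $\dim_{\mathrm{G}} Z_k \le n-1$, an $(n,\delta_k)$-suspender $(p_1^k,\dots,p_n^k)$ with opposite tuple $(q_1^k,\dots,q_n^k)$, and a point $z_k \in Z_k$ with $\vert d_{Z_k}(p_i^k,z_k) - \pi/2 \vert < c\delta_k$ for all $i$.

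We pass to $Z := \ulim (Z_k, z_k)$. It is a $\CAT(1)$ space, because the $\CAT(1)$ condition passes to ultralimits. It is penetrable, because penetrability is closed under ultralimits. By \eqref{eqn: lscdim}, $\dim_{\mathrm{G}} Z \le n-1$. All the points involved lie within distance about $\pi$ of $z_k$, so $p_i^k \to p_i$, $q_i^k \to q_i$, $z_k \to z$ in $Z$.

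Next we check that $(p_1,\dots,p_n)$ and $(q_1,\dots,q_n)$ are mutually opposite rigid $n$-suspenders.
\begin{itemize}
\item Every point $w$ of $Z$ with $d(z,w)$ finite is a limit $[(w_k)]$. By \eqref{eqn: dsph} and Lemma \ref{lem: omdsusp}(1), $\vert d(p_i^k,w_k) + d(w_k,q_i^k) - \pi \vert < \delta_k$, so in the limit $d(p_i,w) + d(w,q_i) = \pi$.
\item Points at infinite distance from $z$ cannot occur. The same inequality bounds $d(p_i^k,w_k) \le \pi+1$, so each $Z_k$ has diameter at most about $2\pi + 2$, and the ultralimit does not depend on basepoints.
\item By \eqref{eqn: dortho} and \eqref{eqn: omdsuspb}, the mutual distances converge to exactly $\pi/2$.
\end{itemize}
Thus condition (1) holds for each $i$ and condition (2) holds for distinct $i,j$, and moreover $d(p_i,z) = \pi/2$ for all $i$.

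Now apply Proposition \ref{prop: m0susp} with $m=n$. It gives a round closed $\pi$-convex subset $C_{n-1} \cong \Sph^{n-1}$ containing the $p_i$ and $q_i$, and an isometric splitting $Z = C_{n-1} \ast C_{n-1}^{\perp}$. Write $z$ in join coordinates: $z$ lies on a join geodesic from some $x \in C_{n-1}$ to some $y \in C_{n-1}^{\perp}$, at angle parameter $t \in [0,\pi/2]$.

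The distance from $z$ to a point $u \in C_{n-1}$ satisfies $\cos d(z,u) = \cos t \, \cos d(x,u)$. The conditions $d(z,p_i) = \pi/2$ then force $\cos t \cos d(x,p_i) = 0$ for all $i$. Since $p_1,\dots,p_n$ are an orthonormal frame in $\Sph^{n-1}$, no $x \in C_{n-1}$ is orthogonal to all of them, so $\cos t = 0$. Hence $z \in C_{n-1}^{\perp}$ and $C_{n-1}^{\perp} \neq \emptyset$.

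The main obstacle is to contradict $\dim_{\mathrm{G}} Z \le n-1$. Since $C_{n-1}^{\perp}$ is nonempty, $Z$ contains the spherical join $\Sph^{n-1} \ast \{z\}$, which is a closed hemisphere of dimension $n$. Its geometric, equivalently compact covering, dimension is $n$ (Theorem \ref{thm: dimgdimt}), so $\dim_{\mathrm{G}} Z \ge n$, a contradiction. We also have to be careful that the splitting is genuinely isometric, so that this hemisphere embeds isometrically in $Z$. This is exactly the content of Proposition \ref{prop: m0susp}, and it is where Lemma \ref{lem: capmdsusp}-type dimension counting is replaced by the rigid structure.
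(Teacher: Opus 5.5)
Your proposal is correct and follows the route the paper indicates by citing \cite{nagano6}: an ultralimit, Lemma \ref{lem: omdsusp} upgrading the $(n,\delta_k)$-suspenders to mutually opposite rigid $n$-suspenders, Proposition \ref{prop: m0susp} splitting off a round $\Sph^{n-1}$, and the bound $\dim_{\mathrm{G}} \le n-1$ then excluding a point at distance $\pi/2$ from every $p_i$. Your version first places $z$ in $C_{n-1}^{\perp}$ and then exhibits an isometric closed $n$-hemisphere; the paper's style (compare the proof of Lemma \ref{lem: aperp}) instead uses the dimension bound to force the limit to be $\Sph^{n-1}$ itself, but this is only a reordering of the same argument.
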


In the process of the proof of Theorem \ref{thm: grcat1},
we will use the following:

\begin{lem}\label{lem: aperp}
For every $\epsilon \in (0,1)$,
and for every $n \in \N$,
there exists a sufficiently small $\delta \in (0,1)$ 
satisfying the following property:
Let $Z$ be a penetrable $\CAT(1)$ space with metric $d_Z$
of $\dim_{\mathrm{G}}Z \le n-1$
admitting an $(n,\delta)$-suspender
$(p_1,\dots,p_n)$.
If $z_1, z_2 \in Z$ satisfy 
\[
\left\vert d_Z(z_1,z_2) - \frac{\pi}{2} \right\vert < \delta,
\]
then we have
\[
\left\vert
\sum_{i=1}^n \cos d_Z \left( p_i, z_1 \right)
\cos d_Z \left( p_i, z_2 \right)
\right\vert
< \epsilon.
\]
\end{lem}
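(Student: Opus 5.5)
We argue by contradiction via an ultralimit, following the pattern of Lemma \ref{lem: fullsusp}. Suppose the statement fails for some $\epsilon \in (0,1)$ and $n \in \N$. Then for each $k \in \N$ there are a penetrable $\CAT(1)$ space $Z_k$ with $\dim_{\mathrm{G}} Z_k \le n-1$, an $(n,1/k)$-suspender $(p_{1,k},\dots,p_{n,k})$ with opposite $(q_{1,k},\dots,q_{n,k})$, and points $z_{1,k}, z_{2,k}$ with $|d_{Z_k}(z_{1,k},z_{2,k}) - \pi/2| < 1/k$ but
\[
\left\vert \sum_{i=1}^n \cos d_{Z_k}(p_{i,k},z_{1,k}) \cos d_{Z_k}(p_{i,k},z_{2,k}) \right\vert \ge \epsilon .
\]
Since the $p_{i,k}$ are $(1,1/k)$-suspenders, $\diam Z_k \le \pi + 1/k$. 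Hence the ultralimit $Z := \ulim Z_k$ does not depend on base points, and all the chosen points define points $p_i, q_i, z_1, z_2 \in Z$. The space $Z$ is $\CAT(1)$ and penetrable, since penetrability is closed under ultralimits. By \eqref{eqn: lscdim}, $\dim_{\mathrm{G}} Z \le n-1$.

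Passing \eqref{eqn: dsph}, \eqref{eqn: dortho}, and Lemma \ref{lem: omdsusp} to the limit gives $d_Z(p_i,z)+d_Z(z,q_i) = \pi$ for all $z \in Z$. It also gives that all the distances $d_Z(p_i,p_j)$, $d_Z(p_i,q_j)$, $d_Z(q_i,q_j)$ for $i \ne j$ equal $\pi/2$. For the suspender identity, every $z \in Z$ is a limit of points $z_k \in Z_k$, and both the upper bound $\pi + 1/k$ and the lower bound $\pi - 1/k$ from \eqref{eqn: omdsuspa} pass to the limit. So $(p_1,\dots,p_n)$ and $(q_1,\dots,q_n)$ are mutually opposite rigid $n$-suspenders in $Z$. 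The limit inequalities are $d_Z(z_1,z_2) = \pi/2$ and
\[
\left\vert \sum_{i} \cos d_Z(p_i,z_1)\cos d_Z(p_i,z_2)\right\vert \ge \epsilon ,
\]
the latter by continuity of $\cos$.

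Now apply Proposition \ref{prop: m0susp}: $Z = C_{n-1} \ast C_{n-1}^{\perp}$ with $C_{n-1} \cong \Sph^{n-1}$ containing the $p_i$ and $q_i$. Here $p_1,\dots,p_n$ are pairwise orthogonal in $\Sph^{n-1}$, so they form an orthonormal basis of $\R^n$. The key step is to show $C_{n-1}^{\perp} = \emptyset$, that is, $Z = C_{n-1} \cong \Sph^{n-1}$. A point $w$ of the polar set has $d_Z(p_i,w) = \pi/2$ for all $i$. One way to exclude it is Lemma \ref{lem: fullsusp} applied to $Z$, which admits an $(n,\delta)$-suspender for every $\delta$. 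Alternatively, the dimension count gives it directly: a nonempty $C_{n-1}^{\perp}$ would make the join $\Sph^{n-1} \ast C_{n-1}^{\perp}$ contain a copy of $\Sph^{n-1} \ast \{\mathrm{pt}\}$, a closed $n$-dimensional hemisphere. That gives $\dim_{\mathrm{G}} Z \ge n$, a contradiction.

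With $Z = \Sph^{n-1} \subset \R^n$, we have $\cos d_Z(p_i,z) = \langle p_i, z\rangle$. Hence
\[
\sum_i \cos d_Z(p_i,z_1)\cos d_Z(p_i,z_2) = \langle z_1, z_2\rangle = \cos d_Z(z_1,z_2) = 0,
\]
contradicting the lower bound $\epsilon$. The main care needed is in justifying the passage of the relaxed conditions to the rigid ones in the limit, especially the lower bound coming from Lemma \ref{lem: omdsusp}, which is what gives exact equality in the suspender condition.
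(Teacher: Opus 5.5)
Your proposal is correct and follows the paper's own proof. In both, an ultralimit contradiction argument gives a limit space carrying a rigid $n$-suspender; Proposition \ref{prop: m0susp} together with $\dim_{\mathrm{G}} \le n-1$ forces the limit to be $\Sph^{n-1}$, where the sum equals $\langle z_1, z_2 \rangle = 0$. You also make explicit two points the paper leaves implicit: the diameter bound that makes the ultralimit independent of base points, and the exclusion of the polar factor $C_{n-1}^{\perp}$.
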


\begin{proof}
Let $\epsilon \in (0,1)$ and $n \in \N$.
Suppose that
for some sequence $(\delta_k)$ in $(0,\infty)$ with $\delta_k \to 0$
there exists a sequence $(Z_k)$
of penetrable $\CAT(1)$ spaces with metrics $d_{Z_k}$
of $\dim_{\mathrm{G}}Z_k \le n-1$
such that
each $Z_k$ admits an $(n,\delta_k)$-suspender.
Suppose in addition that
for some $z_{1,k}, z_{2,k} \in Z_k$
with 
\[
\left\vert d_{Z_k} \left( z_{1,k} ,z_{2,k} \right) - \frac{\pi}{2} \right\vert 
< \delta_k
\]
we have
\[
\left\vert
\sum_{i=1}^n \cos d_{Z_k} \left( p_{i,k}, z_{1,k} \right)
\cos d_{Z_k} \left( p_{i,k}, z_{2,k} \right)
\right\vert
\ge \epsilon.
\]
Choose the ultralimit $(Z_{\omega},z_{\omega})$ 
of a sequence of pointed metric spaces $(Z_k,z_k)$.
For each $i \in \{ 1, \dots, n \}$,
take $p_{i,\omega} \in Z_{\omega}$ with $p_{i,k} \to p_{i,\omega}$.
For each $j \in \{ 1, 2 \}$,
take $z_{j,\omega} \in Z_{\omega}$ with $z_{j,k} \to z_{j,\omega}$.
From \eqref{eqn: lscdim}
we derive $\dim_{\mathrm{G}}Z_{\omega} \le n-1$.
By Lemma \ref{lem: omdsusp}, 
the $m$-tuple 
$\left( p_{1,\omega}, \dots, p_{n,\omega} \right)$
is an $n$-suspender in $Z_{\omega}$.
From Proposition \ref{prop: m0susp}
it follows that $Z_{\omega}$ is isometric to $\Sph^{n-1}$.
Hence we have
\[
\sum_{i=1}^n \cos d_{Z_{\omega}} \left( p_{i,\omega}, z_{1,\omega} \right)
\cos d_{Z_{\omega}} \left( p_{i,\omega}, z_{2,\omega} \right) = 0.
\]
This implies that for $\omega$-large $k$ we have
\[
\left\vert
\sum_{i=1}^n \cos d_{Z_k} \left( p_{i,k}, z_{1,k} \right)
\cos d_{Z_k} \left( p_{i,k}, z_{2,k} \right)
\right\vert
< \epsilon.
\]
This a contradiction,
which proves the lemma.
\end{proof}

\section{Asymptotic geometric regularity}

\subsection{Busemann functions on CAT(0) spaces}

Let $X$ be a $\CAT(0)$ space with metric $d_X$.
Let $\gamma \colon [0,\infty) \to X$ be a ray in $X$.
The \emph{Busemann function $b_{\gamma} \colon X \to \R$} 
along $\gamma$
is defined by
\[
b_{\gamma}(x) := 
\lim_{t \to \infty} \left( d_X \left( x, \gamma(t) \right) - t \right).
\]
The Busemann function $b_{\gamma}$ is $1$-Lipschitz and convex.
For $r \in \R$, 
we denote by $B_r(\gamma)$
the closed $(-r)$-horoball $b_{\gamma}^{-1} ((-\infty,-r])$.
Note that
for every $x \in X$,
and for every $r \in (0,\infty)$ with $b_{\gamma}(x) > -r$,
we have
\begin{equation}
d_X \left( x, B_r(\gamma) \right) = b_{\gamma}(x) + r,
\label{eqn: horoball}
\end{equation}
where $d_X \left( x, B_r(\gamma) \right)$ is the distance 
from $x$ to $B_r(\gamma)$.
Let $\xi := \gamma(\infty)$.
Take $x \in X$.
For every $y \in X - \{ x \}$,
we have the first variation formula 
\begin{equation}
(b_{\gamma} \circ \gamma_{xy})_+'(a) = - \cos \angle_x (\xi,y)
\label{eqn: 1vfb}
\end{equation}
for Busemann functions,
where
$\gamma_{xy} \colon [a,b] \to X$ is the geodesic 
from $x$ to $y$,
and $(b_{\gamma} \circ \gamma_{xy})_+'(a)$
is the right derivative of $b_{\gamma} \circ \gamma_{xy}$ at $a$
(see e.g., \cite[Lemma 3.3]{fujiwara-nagano-shioya}).
Let $\gamma_{x\xi} \colon [0,\infty) \to X$ be the ray 
from $x$ to $\xi$.
For all $s, t \in [0,\infty)$ with $s < t$,
we have
\begin{equation}
b_{\gamma} \left( \gamma_{x\xi}(s) \right) - 
b_{\gamma} \left( \gamma_{x\xi}(t) \right)
= s-t.
\label{eqn: lebelb}
\end{equation}

We know the following basic property
(see e.g., \cite[Lemma 5.1]{nagano5}):

\begin{lem}\label{lem: busemann-level}
Let $X$ be a $\CAT(0)$ space.
Let $\gamma \colon [0,\infty) \to X$ be a ray in $X$.
Let $\xi := \gamma(\infty)$.
If for distinct points $x, y$ in $X$ we have
$b_{\gamma}(x) = b_{\gamma}(y)$,
then
$\angle_x (\xi,y) \le \pi/2$ and
$\angle_y (\xi,x) \le \pi/2$.
\end{lem}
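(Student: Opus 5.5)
We use convexity of $b_{\gamma}$ along the geodesic $xy$ together with the first variation formula \eqref{eqn: 1vfb}, and it suffices to prove $\angle_x(\xi,y) \le \pi/2$; the other inequality follows by exchanging the roles of $x$ and $y$.

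Let $\gamma_{xy} \colon [0,L] \to X$ be the geodesic from $x$ to $y$, where $L = d_X(x,y) > 0$, and put $f := b_{\gamma} \circ \gamma_{xy}$. Since $b_{\gamma}$ is convex and $\gamma_{xy}$ is a geodesic, $f$ is a convex function on $[0,L]$. By hypothesis $f(0) = f(L)$.

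For a convex function on an interval, the right derivative at the left endpoint is at most the slope of any chord starting there. Hence
\[
f_+'(0) \le \frac{f(L) - f(0)}{L} = 0 .
\]
The first variation formula \eqref{eqn: 1vfb} gives $f_+'(0) = -\cos \angle_x(\xi,y)$. Therefore $\cos \angle_x(\xi,y) \ge 0$, that is, $\angle_x(\xi,y) \le \pi/2$.

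The only point needing care is that the right derivative exists and equals $-\cos\angle_x(\xi,y)$. This is exactly what the cited first variation formula provides. Existence of the one-sided derivative is also automatic from convexity of $f$.
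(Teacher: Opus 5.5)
Your argument is correct: convexity of $b_\gamma$ along $xy$ together with $f(0)=f(L)$ forces $(b_\gamma\circ\gamma_{xy})_+'(0)\le 0$, and the first variation formula \eqref{eqn: 1vfb} turns this into $\cos\angle_x(\xi,y)\ge 0$; swapping $x$ and $y$ gives the other inequality. The paper gives no proof of its own and only cites \cite[Lemma 5.1]{nagano5}, but it records exactly your two ingredients (convexity of $b_\gamma$ and \eqref{eqn: 1vfb}) immediately before the lemma, so your route is the natural one its setup points to.
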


\subsection{Strainers at infinity}

Similarly to \cite[Subsection 5.2]{nagano5},
we now introduce the following:

\begin{defn}\label{defn: strinf}
Let $m \in \N$ and $\delta \in (0,1)$.
Let $X$ be a $\CAT(0)$ space.
We say that
an $m$-tuple $(\xi_1, \dots, \xi_m)$ of points in $\partial_{\infty}X$ 
is an
\emph{$(m,\delta)$-strainer at infinity}
if it is an $(m,\delta)$-suspender in $\partial_{\mathrm{T}}X$.
If an $m$-tuple $(\xi_1, \dots, \xi_m)$ of points in $\partial_{\infty}X$ 
is an $(m,\delta)$-strainer at infinity,
then there exists another 
$m$-tuple $(\eta_1, \dots, \eta_m)$ of points in $\partial_{\infty}X$
such that
$(\xi_1, \dots, \xi_m)$ and 
$(\eta_1, \dots, \eta_m)$ are mutually opposite to each other.
\end{defn}

By the definition of the angle metric $\angle$ on $\partial_{\infty}X$,
we have:

\begin{lem}\label{lem: sphstrinf}
Let $X$ be a $\CAT(0)$ space,
and let $(\xi_1, \dots, \xi_m)$ be an $(m,\delta)$-strainer at infinity
in $\partial_{\mathrm{T}}X$.
Then for every $x \in X$ the $m$-tuple
$((\xi_1)_x', \dots, (\xi_m)_x')$ of the directions at $x$
is an $(m,\delta)$-suspender in $\Sigma_xX$.
Moreover,
if $(\xi_1, \dots, \xi_m)$ and $(\eta_1, \dots, \eta_m)$ 
are mutually opposite $(m,\delta)$-strainers at infinity
in $\partial_{\mathrm{T}}X$,
then 
$\left( (\xi_1)_x', \dots, (\xi_m)_x' \right)$ and 
$\left( (\eta_1)_x', \dots, (\eta_m)_x' \right)$
are mutually opposite $(m,\delta)$-suspenders in $\Sigma_xX$.
\end{lem}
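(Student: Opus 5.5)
The plan is to compare angles at $x$ with the angle metric $\angle$ on $\partial_{\infty}X$, using the inequality $\angle_x(\xi,\eta) \le \angle(\xi,\eta) = \min\{d_{\T}(\xi,\eta),\pi\}$, which holds by the definition $\angle = \sup_{p}\angle_p$. The key fact is the following. For every $\zeta \in \partial_{\infty}X$ and every direction $v \in \Sigma_xX$ we have
\[
\angle_x(\xi,v) + \angle_x(v,\eta) \le \sup_{z}\{ d_{\T}(\xi,z)+d_{\T}(z,\eta)\}.
\]
This cannot be read off directly, since a direction $v$ need not point toward a point at infinity. So I will exploit geodesic-free arguments as follows.

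First, the pairwise conditions \eqref{eqn: dortho} transfer immediately. For distinct $i,j$,
\[
\angle_x(\xi_i,\xi_j) \le d_{\T}(\xi_i,\xi_j) < \pi/2+\delta,
\]
and the same bound holds for the pairs $(\xi_i,\eta_j)$ and $(\eta_i,\eta_j)$. Thus condition (2) of an $(m,\delta)$-suspender holds for the directions $(\xi_i)_x'$ and $(\eta_i)_x'$ in $\Sigma_xX$.

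Second, the main step is condition (1): I must show
\[
\angle_x(\xi_i,v) + \angle_x(v,\eta_i) < \pi+\delta \quad \text{for all } v \in \Sigma_xX.
\]
By density and continuity, it suffices to take $v = y_x'$ for $y \in X \setminus \{x\}$.

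- Extend the geodesic $xy$ beyond $y$ to a ray. This uses geodesic completeness; if that is unavailable, I instead take the sequence of points $y_t$ on $xy$ and use the fact that the Tits metric bounds angles from rays. Let $\zeta$ be the endpoint of this ray.
- Then $y_x' = \zeta_x'$, so $\angle_x(\xi_i,\zeta) \le \angle(\xi_i,\zeta) \le d_{\T}(\xi_i,\zeta)$, and likewise for $\eta_i$.
- The sum is then at most $d_{\T}(\xi_i,\zeta)+d_{\T}(\zeta,\eta_i) < \pi+\delta$ by \eqref{eqn: dsph}.

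Without geodesic completeness, the point $v$ may not come from infinity. In that case I use the triangle inequality in $\Sigma_xX$ together with $\angle_x(\xi_i,\eta_i)$, or I reduce the general case to the geodesically complete setting the paper works in. This step is the main obstacle.

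Finally, the same argument applies verbatim with the roles of $\xi_i$ and $\eta_i$ exchanged. The opposite tuples therefore correspond, giving the "moreover" part.
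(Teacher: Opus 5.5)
Under geodesic completeness your argument is right, and it is what the paper's one-line justification (``by the definition of the angle metric'') tacitly relies on. The pairwise bounds come from $\angle_x \le \angle \le d_{\T}$. Condition (1) comes from two facts: each direction $y_x'$ equals $\zeta_x'$ for the endpoint $\zeta$ of a ray extending $xy$ (a local geodesic in a $\CAT(0)$ space is a geodesic), and such directions are dense in $\Sigma_xX$.

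Two small repairs are needed. First, the definition requires the \emph{supremum} to be $<\pi+\delta$. On the dense set, bound the sum by $S_i := \sup_{z}\{d_{\T}(\xi_i,z)+d_{\T}(z,\eta_i)\}$, which is $<\pi+\delta$ by \eqref{eqn: dsph} and passes to the closure as $\le S_i$. A pointwise strict bound would only give $\le \pi+\delta$ after taking the closure. Second, no exchange of roles is needed at the end, since \eqref{eqn: dsph} is symmetric in $p_0,q_0$. In fact you prove the first assertion by proving the second, using the opposite tuple supplied by Definition \ref{defn: strinf}.

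The case you leave open cannot be closed, because without geodesic completeness the lemma is false, so neither fallback can work. Let $X$ be $\R^n$ with a segment glued by one endpoint to a point $x$. This space is $\CAT(0)$, and every ray eventually runs in $\R^n$. At points of the segment other than $x$, all rays to infinity leave in the same direction, so $\partial_{\T}X$ is isometric to $\Sph^{n-1}$. Hence an orthonormal frame $(\xi_1,\dots,\xi_n)$ and its antipodal frame $(\eta_1,\dots,\eta_n)$ are mutually opposite $(n,\delta)$-strainers at infinity for every $\delta$.

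However, $\Sigma_xX$ is $\Sph^{n-1}$ together with the direction $v$ of the segment, which is at angle $\pi$ from every other direction. For any candidate opposite point $q \in \Sigma_xX$, take $z = v$ if $q \neq v$, and $z = (\eta_1)_x'$ if $q = v$. Either way $\angle_x((\xi_1)_x',z) + \angle_x(z,q) = 2\pi$, so $(\xi_1)_x'$ is not even a $(1,\delta)$-suspender in $\Sigma_xX$.

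The triangle inequality in $\Sigma_xX$ only gives the lower bound $\angle_x(\xi_i,v)+\angle_x(v,\eta_i) \ge \angle_x(\xi_i,\eta_i)$, which is the wrong direction. Points on $xy$ cannot produce ideal directions near such a $v$. The correct fix is to add geodesic completeness to the hypotheses. This costs nothing: Busemann strainer maps are only defined on geodesically complete spaces, and the cone $C_0(Z)$ used in Theorem \ref{thm: grcat1g} is geodesically complete, so every place the paper ultimately uses the lemma satisfies it.
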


Similarly to \cite[Lemma 5.3]{nagano6},
we verify that
the existence of strainers at infinity guarantees the 
existence of almost flat ideal triangles:

\begin{lem}\label{lem: almflattri}
Let $X$ be a $\CAT(0)$ space.
Assume that
a point $\xi$ in $\partial_{\mathrm{T}}X$ be a 
$(1,\delta)$-strainer at infinity. 
Then for every pair of distinct points $x, y$ in $X$
the following hold:
\begin{enumerate}
\item
$\pi - 2\delta < \angle_x (\xi,y) + \angle_y (\xi,x) \le \pi$;
\item
if $b_{\gamma}(x) = b_{\gamma}(y)$ for a ray 
$\gamma$ in $X$ with $\xi = \gamma(\infty)$,
then 
\[
\frac{\pi}{2} - 2\delta < \angle_x (\xi,y) \le \frac{\pi}{2}.
\]
\end{enumerate}
\end{lem}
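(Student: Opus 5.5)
Let $\eta \in \partial_{\infty}X$ be opposite to $\xi$, so that $d_{\T}(\xi,\zeta) + d_{\T}(\zeta,\eta) < \pi + \delta$ for all $\zeta \in \partial_{\T}X$. Since $\partial_{\T}X$ is penetrable (or we simply use that $\eta$ is itself a point of $\partial_{\T}X$ with $d_{\T}(\xi,\eta)>\pi-\delta$ by Lemma \ref{lem: o1dsusp}), we get $\angle(\xi,\eta) > \pi - \delta$. The plan for (1) is to compare angles at $x$ and $y$ with the ideal triangle $x\,y\,\xi$. The upper bound $\angle_x(\xi,y) + \angle_y(\xi,x) \le \pi$ is the standard fact that an ideal triangle with one vertex at infinity in a $\CAT(0)$ space has angle sum at most $\pi$. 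I will obtain it by taking $z_t := \gamma_{x\xi}(t)$, forming geodesic triangles $x y z_t$, and using $\angle_x + \angle_y + \angle_{z_t} \le \pi$. Upper semicontinuity of angles in $\CAT(0)$ spaces gives $\limsup_t \angle_y(z_t,x) \le \ldots$. More cleanly, the comparison angles at $y$ converge to at least $\angle_y(\xi,x)$, and $\angle_{z_t} \to 0$.

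For the lower bound in (1), I use Lemma \ref{lem: sphstrinf} at $x$: the directions $\xi'_x,\eta'_x$ are opposite $(1,\delta)$-suspenders in $\Sigma_xX$. Applying \eqref{eqn: dsph} there with $z = y'_x$ gives $\angle_x(\xi,y) + \angle_x(y,\eta) < \pi + \delta$, and similarly $\angle_y(\xi,x) + \angle_y(x,\eta) < \pi+\delta$. Adding these and using the upper bound of (1) applied to $\eta$ (that is, $\angle_x(\eta,y)+\angle_y(\eta,x)\le\pi$) gives only an upper estimate, so I need to reverse the roles. Instead I apply Lemma \ref{lem: o1dsusp} in $\Sigma_xX$, which is legitimate provided $\Sigma_xX$ is penetrable; if it is not, I will work with the definition directly. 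This yields $\angle_x(\xi,y) + \angle_x(y,\eta) > \pi - \delta$, and likewise at $y$. Adding, and subtracting $\angle_x(\eta,y) + \angle_y(\eta,x) \le \pi$, gives $\angle_x(\xi,y)+\angle_y(\xi,x) > \pi - 2\delta$.

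The main obstacle is the penetrability issue in $\Sigma_xX$. If needed, I will bypass it by arguing in $\partial_{\T}X$ through the Euclidean cone. Alternatively, I will use that $\angle_x(\xi,\eta) \ge$ something: in a $\CAT(0)$ space $\angle_x(\xi,\eta) \le \angle(\xi,\eta)$, which goes the wrong way. So I will first try to establish penetrability of $\Sigma_xX$, or use the $\CAT(1)$ triangle inequality $\angle_x(\xi,y)+\angle_x(y,\eta)\ge\angle_x(\xi,\eta)$. In that case I need $\angle_x(\xi,\eta) > \pi-\delta$, which requires knowing that the rays $x\xi$ and $x\eta$ nearly form a line, and I do not have this in general.

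For (2), suppose $b_\gamma(x)=b_\gamma(y)$. Lemma \ref{lem: busemann-level} gives $\angle_x(\xi,y)\le\pi/2$ and $\angle_y(\xi,x)\le\pi/2$. Combining this with (1), we get $\angle_x(\xi,y) > \pi - 2\delta - \angle_y(\xi,x) \ge \pi/2 - 2\delta$.
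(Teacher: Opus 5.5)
Your argument is essentially the paper's proof: the upper bound is the standard ideal-triangle fact (the paper cites Bridson--Haefliger, Proposition II.9.3), and the lower bound comes from Lemma \ref{lem: sphstrinf} and Lemma \ref{lem: o1dsusp} in $\Sigma_xX$ and $\Sigma_yX$, after which you add the two inequalities and subtract $\angle_x(\eta,y)+\angle_y(\eta,x)\le\pi$; part (2) then follows from Lemma \ref{lem: busemann-level} exactly as you do it, and the only cosmetic difference is that the paper takes $z=\xi_x'$ in Lemma \ref{lem: o1dsusp} to get $\angle_x(\xi,\eta)>\pi-\delta$ and then uses the triangle inequality, so the doubt in your last paragraph about obtaining $\angle_x(\xi,\eta)>\pi-\delta$ is unfounded. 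The penetrability of $\Sigma_xX$ that you worry about is also used tacitly by the paper, and it holds whenever $X$ is geodesically complete, which is the setting in which the lemma is applied (every geodesic direction at $x$ has an opposite direction obtained by extending the geodesic backwards through $x$, and these directions are dense), so none of your fallback routes is needed.
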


\begin{proof}
Since $X$ is $\CAT(0)$,
we know $\angle_x (\xi,y) + \angle_y (\xi,x) \le \pi$
(see e.g., \cite[Proposition II.9.3]{bridson-haefliger}).
By Lemma \ref{lem: busemann-level},
if $b_{\gamma}(x) = b_{\gamma}(y)$ for a ray 
$\gamma$ in $X$ with $\xi = \gamma(\infty)$,
then $\angle_x (\xi,y) \le \pi/2$ and
$\angle_y (\xi,x) \le \pi/2$.

Take a point $\eta$ in $\partial_{\mathrm{T}}X$ 
for which $\xi$ and $\eta$
are opposite $(1,\delta)$-strainers at infinity.
Similarly to the case of $\xi$,
we know $\angle_x (\eta,y) + \angle_y (\eta,x) \le \pi$.
By Lemmas \ref{lem: o1dsusp} and \ref{lem: sphstrinf},
we have
\begin{align*}
\angle_x (\xi,y) &+ \angle_x (\eta,y) \ge \angle_x (\xi,\eta) > \pi - \delta, \\
\angle_y (\xi,x) &+ \angle_y (\eta,x) \ge \angle_y (\xi,\eta) > \pi - \delta.
\end{align*}
Therefore $\angle_x (\xi,y) + \angle_y (\xi,x) > \pi - 2\delta$.
Moreover,
if $b_{\gamma}(x) = b_{\gamma}(y)$,
then we obtain $\angle_x (\xi,y) > \pi/2 - 2\delta$.
This proves the lemma.
\end{proof}

\subsection{Strainer maps at infinity}

Let $X$ be a $\CAT(0)$ space with metric $d_X$.
For an $m$-tuple $(\gamma_1, \dots, \gamma_m)$
of rays in $X$,
define a map
$\varphi \colon X \to \R^m$
on $X$
by
$\varphi = \left( b_{\gamma_1}, \dots, b_{\gamma_m} \right)$,
where 
$\left( b_{\gamma_1}, \dots, b_{\gamma_m} \right)$
is the $m$-tuple of the Busemann functions.
For $i \in \{ 1, \dots, m \}$,
let $\xi_i := \gamma_i(\infty)$.
By the first variation formula \eqref{eqn: 1vfb},
the map $\varphi$ is differentiable at all $x \in X$
with differential $D_x\varphi \colon T_xX \to \R^m$ determined by
\[
\left( D_x\varphi \right) \left( rv \right)
= -r \left( \cos \angle_x \left( \left( \xi_1 \right)_x', v \right), \dots, 
\cos \angle_x \left( \left( \xi_m \right)_x', v \right) \right).
\]

\begin{defn}\label{defn: strm}
Let $m \in \N$ and $\delta \in (0,1)$.
Let $X$ be a geodesically complete $\CAT(0)$ space.
We say that a map
$\varphi \colon X \to \R^m$ 
is a \emph{Busemann $(m,\delta)$-strainer map} on $X$
if there exists an $m$-tuple 
$\left( \gamma_1, \dots, \gamma_m \right)$
of rays in $X$ 
with $\varphi = \left( b_{\gamma_1}, \dots, b_{\gamma_m} \right)$
such that
the $m$-tuple
$\left( \gamma_1(\infty), \dots, \gamma_m(\infty) \right)$
is an $(m,\delta)$-strainer at infinity
in $\partial_{\mathrm{T}}X$.
\end{defn}

For $u \in \R^m$
we denote by $\Vert u \Vert$
the Euclidean norm of $u$ on $\R^m$.

From Lemma \ref{lem: almflattri}
we derive the following first variation inequalities 
for Busemann strainer maps
(cf.~\cite[Proposition 8.5]{lytchak-nagano1}).

\begin{lem}\label{lem: 1stvfbstr0}
Let 
$\varphi \colon X \to \R^m$
be a Busemann $(m,\delta)$-strainer map on 
a geodesically complete $\CAT(0)$ space $X$
with $\varphi = \left( b_{\gamma_1}, \dots, b_{\gamma_m} \right)$.
Take a geodesic $\gamma \colon [a,b] \to X$ 
joining distinct two points in $X$.
Then for each $i \in \{ 1, \dots, m \}$ we have
\[
\left\vert
\frac{\left( b_{\gamma_i} \circ \gamma \right)(b) - 
\left( b_{\gamma_i} \circ \gamma \right)(a)}
{d_X \left( \gamma(b), \gamma(a) \right)}
- \left( b_{\gamma_i} \circ \gamma \right)^+(a)
\right\vert
< 2\delta.
\] 
In particular, we have
\[
\left\vert
\frac{\left\Vert \left( \varphi \circ \gamma \right)(b) - 
\left( \varphi \circ \gamma \right)(a) \right\Vert}
{d_X \left( \gamma(b), \gamma(a) \right)}
- \left\Vert \left( \varphi \circ \gamma \right)^+(a) \right\Vert
\right\vert
< 2 \sqrt{m} \delta,
\]
where 
$\left( \varphi \circ \gamma \right)^+(a) \in \R^m$
is defined by
\[
\left( \varphi \circ \gamma \right)^+(a) := 
\left( \left( b_{\gamma_1} \circ \gamma \right)^+(a), \dots, 
\left( b_{\gamma_m} \circ \gamma \right)^+(a) \right).
\]
If in addition for $j \in \{ 1, \dots, m \}$ we have
$\left( b_{\gamma_j} \circ \gamma \right)(a) = 
\left( b_{\gamma_j} \circ \gamma \right)(b)$,
then we have
\[
\left\Vert \left( b_{\gamma_j} \circ \gamma \right)^+(a) \right\Vert 
< 2\delta.
\]
Moreover,
if 
$\left( \varphi \circ \gamma \right)(a) = 
\left( \varphi \circ \gamma \right)(b)$,
then 
\[
\left\Vert \left( \varphi \circ \gamma \right)^+(a) \right\Vert 
< 2 \sqrt{m} \delta.
\]
\end{lem}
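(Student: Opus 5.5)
Write $x := \gamma(a)$, $y := \gamma(b)$ and $\xi_i := \gamma_i(\infty)$. The plan is to get upper and lower bounds for the difference quotient of $b_{\gamma_i}$ along $xy$ in terms of the angles at the endpoints, and then to compare these with the right derivative at $a$ using part (1) of Lemma \ref{lem: almflattri}. By the first variation formula \eqref{eqn: 1vfb}, the right derivative is $(b_{\gamma_i}\circ\gamma)^+(a) = -\cos\angle_x(\xi_i,y)$.

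The Busemann function $b_{\gamma_i}$ is convex along $\gamma$. Its right derivative at $a$ is therefore at most the difference quotient, and the difference quotient is at most the left derivative at $b$. The left derivative at $b$ comes from applying \eqref{eqn: 1vfb} to the reversed geodesic from $y$ to $x$, which gives $\cos\angle_y(\xi_i,x)$. Putting these together,
\[
-\cos\angle_x(\xi_i,y) \le \frac{b_{\gamma_i}(y)-b_{\gamma_i}(x)}{d_X(x,y)} \le \cos\angle_y(\xi_i,x).
\]
Here the point $\xi_i$ is a $(1,\delta)$-strainer at infinity, since it is one component of an $(m,\delta)$-strainer at infinity. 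Lemma \ref{lem: almflattri}(1) then gives $\angle_y(\xi_i,x) > \pi - 2\delta - \angle_x(\xi_i,y)$. Because cosine is $1$-Lipschitz, this yields
\[
\cos\angle_y(\xi_i,x) < \cos\bigl(\pi - \angle_x(\xi_i,y)\bigr) + 2\delta = -\cos\angle_x(\xi_i,y) + 2\delta,
\]
with strict inequality because the angle inequality is strict. Consequently the difference quotient lies in the interval $[(b_{\gamma_i}\circ\gamma)^+(a),\, (b_{\gamma_i}\circ\gamma)^+(a) + 2\delta)$. This proves the first estimate.

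For the vector estimate, let $u\in\R^m$ be the vector of difference quotients and $v := (\varphi\circ\gamma)^+(a)$. Each coordinate satisfies $|u_i - v_i| < 2\delta$, so $\|u - v\| < 2\sqrt m\,\delta$. The reverse triangle inequality gives $\bigl|\|u\|-\|v\|\bigr| \le \|u-v\|$, and the claim follows.

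The remaining assertions are special cases. If $b_{\gamma_j}(x) = b_{\gamma_j}(y)$, then the $j$-th difference quotient is $0$, and the first estimate gives $|(b_{\gamma_j}\circ\gamma)^+(a)| < 2\delta$. If all coordinates agree, then $u = 0$ and the vector estimate gives $\|v\| < 2\sqrt m\,\delta$.

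The only delicate step is the comparison between the left derivative at $b$ and the right derivative at $a$. It needs the sharp strict form of Lemma \ref{lem: almflattri}(1), together with the fact that convexity bounds the difference quotient between the one-sided derivatives at the two endpoints. Everything else is routine.
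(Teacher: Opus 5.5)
Your argument is correct and is the derivation the paper has in mind when it says the lemma follows from Lemma \ref{lem: almflattri}; the paper writes out no proof beyond that pointer. Convexity of $b_{\gamma_i}$ traps the difference quotient between the one-sided endpoint derivatives $-\cos\angle_x(\xi_i,y)$ and $\cos\angle_y(\xi_i,x)$ given by \eqref{eqn: 1vfb}, and part (1) of that lemma makes these two endpoint derivatives differ by less than $2\delta$.

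One small imprecision: going from $\angle_y(\xi_i,x) > \pi-2\delta-\angle_x(\xi_i,y)$ to the cosine bound needs more than the $1$-Lipschitz property of $\cos$. You also need the upper bound $\angle_x(\xi_i,y)+\angle_y(\xi_i,x)\le\pi$, or monotonicity of $\cos$ on $[0,\pi]$. Both are available, the first from the same lemma.
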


Lemma \ref{lem: 1stvfbstr0} 
leads to the following geometric property:

\begin{prop}\label{prop: 1stvfbstr}
Let 
$\varphi \colon X \to \R^m$
be a Busemann $(m,\delta)$-strainer map on 
a geodesically complete $\CAT(0)$ space $X$.
Let $\gamma \colon [a,b] \to X$ be a geodesic
joining distinct two points in $X$.
Then for all $s, t \in [a,b)$ we have
\[
\left\Vert \left( \varphi \circ \gamma \right)^+(s) - 
\left( \varphi \circ \gamma \right)^+(t) \right\Vert
\le 4 \sqrt{m} \delta.
\]
If for $s_1, s_2 \in [a,b)$ with $s_1 \neq s_2$
we have
$\left( \varphi \circ \gamma \right)(s_1) = 
\left( \varphi \circ \gamma \right)(s_2)$,
then for all $t \in [a,b)$ we have
\[
\left\Vert \left( \varphi \circ \gamma \right)^+(t) \right\Vert
\le 6 \sqrt{m} \delta.
\]
\end{prop}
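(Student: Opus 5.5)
Both inequalities will follow from Lemma \ref{lem: 1stvfbstr0} applied to subsegments of $\gamma$, combined with the triangle inequality. The key observation is that any restriction of $\gamma$ to a subinterval $[s,t] \subset [a,b]$ with $s<t$ is again a geodesic joining two distinct points. So Lemma \ref{lem: 1stvfbstr0} applies at its left endpoint $s$. To handle the derivative at $t$ we use the reversed geodesic $\bar\gamma(u) := \gamma(s+t-u)$ on $[s,t]$.

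For the first inequality, fix $s,t\in[a,b)$ with $s<t$ and put $Q := \bigl(\varphi(\gamma(t))-\varphi(\gamma(s))\bigr)/(t-s) \in \R^m$. Applying the componentwise estimate of Lemma \ref{lem: 1stvfbstr0} to $\gamma|_{[s,t]}$ gives $\vert Q_i - (b_{\gamma_i}\circ\gamma)^+(s)\vert < 2\delta$ for each $i$. For the point $t$ we cannot directly use a right derivative along $[s,t]$, since $t$ is its right endpoint. Instead, since $t<b$, choose $t'\in(t,b]$ and apply the lemma to $\gamma|_{[t,t']}$; this controls $(\varphi\circ\gamma)^+(t)$ only against the difference quotient on $[t,t']$, which is not $Q$.

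The cleaner route is to compare both $(\varphi\circ\gamma)^+(s)$ and $(\varphi\circ\gamma)^+(t)$ with the single quotient $P := \bigl(\varphi(\gamma(b))-\varphi(\gamma(t))\bigr)/(b-t)$, using $\gamma|_{[s,b]}$ and $\gamma|_{[t,b]}$. That only works if the quotients on $[s,b]$ and $[t,b]$ agree up to $O(\delta)$. Here convexity helps: each $b_{\gamma_i}\circ\gamma$ is convex, so its right derivative is nondecreasing. Together with the lemma this gives
\[
(b_{\gamma_i}\circ\gamma)^+(s)\le (b_{\gamma_i}\circ\gamma)^+(t)\le Q^{[t,b]}_i < (b_{\gamma_i}\circ\gamma)^+(t)+2\delta,
\]
and similarly, with $Q^{[s,b]}_i$ lying between $(b_{\gamma_i}\circ\gamma)^+(s)$ and $Q^{[t,b]}_i$ by convexity (the slope over $[s,b]$ is at most that over $[t,b]$), we get $Q^{[s,b]}_i<(b_{\gamma_i}\circ\gamma)^+(s)+2\delta$. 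These inequalities do not yet bound the gap $(b_{\gamma_i}\circ\gamma)^+(t)-(b_{\gamma_i}\circ\gamma)^+(s)$ from above, since the slope over $[s,b]$ can be small because of the early part. So we use $\gamma|_{[s,t]}$ together with its reversal. The reversed geodesic $\bar\gamma$ starting at $\gamma(t)$ satisfies $(b_{\gamma_i}\circ\bar\gamma)^+ = -(b_{\gamma_i}\circ\gamma)^-(t)$, the left derivative, and the lemma gives $\vert Q_i-(b_{\gamma_i}\circ\gamma)^-(t)\vert<2\delta$. Hence $\vert(b_{\gamma_i}\circ\gamma)^-(t)-(b_{\gamma_i}\circ\gamma)^+(s)\vert<4\delta$.

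It remains to see that left and right derivatives at $t$ agree. The first variation formula \eqref{eqn: 1vfb} gives right derivative $-\cos\angle_{\gamma(t)}(\xi_i,\gamma(b))$ and left derivative $\cos\angle_{\gamma(t)}(\xi_i,\gamma(s))$. The directions to $\gamma(s)$ and $\gamma(b)$ at $\gamma(t)$ are at angle $\pi$, so by the triangle inequality in $\Sigma_{\gamma(t)}X$ these two angles sum to at least $\pi$. By convexity the left derivative is at most the right one. This bounds the difference by $0$ in one direction, but the other direction needs that the angles sum to exactly $\pi$. To avoid that, I would instead invoke the lemma once more on $\gamma|_{[t,t']}$ and let $t'\downarrow t$, combined with convexity. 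This is the expected obstacle: getting a clean $4\delta$ per coordinate, and I expect the author handles it via Lemma \ref{lem: almflattri}(1) applied to the pairs $(\gamma(s),\gamma(t))$ with both $\xi_i$ and $\eta_i$. Once the per-coordinate bound $4\delta$ holds, summing squares gives $4\sqrt m\,\delta$.

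For the second statement, if $\varphi(\gamma(s_1))=\varphi(\gamma(s_2))$ with $s_1<s_2$, then by the last assertion of Lemma \ref{lem: 1stvfbstr0} applied to $\gamma|_{[s_1,s_2]}$ we get $\Vert(\varphi\circ\gamma)^+(s_1)\Vert<2\sqrt m\,\delta$. Combining this with the first part via the triangle inequality gives $\Vert(\varphi\circ\gamma)^+(t)\Vert<2\sqrt m\,\delta+4\sqrt m\,\delta=6\sqrt m\,\delta$ for every $t\in[a,b)$.
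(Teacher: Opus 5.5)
Your second part is fine: it is exactly the $2\sqrt m\,\delta+4\sqrt m\,\delta$ argument via the last assertion of Lemma \ref{lem: 1stvfbstr0}. The first inequality, however, is never actually established.

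Write $f:=b_{\gamma_i}\circ\gamma$, let $f'_{\pm}$ be its one-sided derivatives, and let $s<t$. From $\gamma|_{[s,t]}$ and its reversal you correctly get $\vert f'_-(t)-f'_+(s)\vert<4\delta$. You then reduce to the claim that $f'_-(t)=f'_+(t)$, and that claim is false in general. Take $X=\R\times T$ with $T$ a tripod, and tilt the $\R$-ray by a small angle $\epsilon$ toward one end of $T$. This gives a $(1,\delta)$-strainer at infinity for every $\delta>\epsilon$. Its Busemann function, restricted to a geodesic in $\{0\}\times T$ through the branch point, equals $\sin\epsilon\,\vert u\vert$, which has a corner of size $2\sin\epsilon$.

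Your remedies do not close this gap. Lemma \ref{lem: 1stvfbstr0} on $\gamma|_{[t,t']}$ only compares $f'_+(t)$ with difference quotients to the right of $t$, and as $t'\downarrow t$ it becomes vacuous. It says nothing about $f'_-(t)$ or $f'_+(s)$. Likewise, Lemma \ref{lem: almflattri}(1) for the pair $\gamma(s),\gamma(t)$ controls $f'_-(t)-f'_+(s)$, not the corner at $t$. Bounding the corner needs a separate argument with the opposite strainer $\eta_i$ in $\Sigma_{\gamma(t)}X$, which gives $0\le f'_+(t)-f'_-(t)<2\delta$. You do not supply this, and combined with your $4\delta$ estimate it would only yield $6\sqrt m\,\delta$ instead of $4\sqrt m\,\delta$.

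The missing idea is to apply Lemma \ref{lem: 1stvfbstr0} together with its reversal on an interval that contains $t$ in its interior. You already considered $\gamma|_{[s,b]}$, but only in the forward direction. The reversal $\bar\gamma(u):=\gamma(s+b-u)$ of $\gamma|_{[s,b]}$ satisfies $(b_{\gamma_i}\circ\bar\gamma)^+(s)=-f'_-(b)$. So the lemma gives $\vert f'_-(b)-Q^{[s,b]}_i\vert<2\delta$, where $Q^{[s,b]}_i$ is the difference quotient of $f$ over $[s,b]$. You also have $Q^{[s,b]}_i-f'_+(s)<2\delta$, and convexity gives $f'_+(s)\le f'_+(t)\le f'_-(b)$ because $s<t<b$. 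Together these yield
\[
0\le f'_+(t)-f'_+(s)\le f'_-(b)-f'_+(s)<4\delta
\]
for each $i$, hence the bound $4\sqrt m\,\delta$. Applying Lemma \ref{lem: almflattri}(1) directly to $\gamma(s),\gamma(b)$ even gives $2\delta$ per coordinate.

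The paper leaves this proof to the reader. Its constants $4=2+2$ and $6=2+4$ suggest this convexity-plus-reversal argument, followed by your second step.
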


The proof of Proposition \ref{prop: 1stvfbstr}
is left to the readers.

\subsection{Pseudo-strainer maps at infinity}

From \eqref{eqn: lebelb} it follows that
every Busemann function on a geodesically complete $\CAT(0)$ space
is $1$-open.

We are going to show that 
every Busemann $(m,\delta)$-strainer map is $c$-open
for some constant $c$ depending only on $m$ and $\delta$.
To do this,
we consider more general maps with Busemann function coordinates.

\begin{defn}\label{defn: bsstrm}
Let $m \in \N$ and $\delta \in (0,1)$.
Let $X$ be a geodesically complete $\CAT(0)$ space.
We say that a map $\varphi \colon X \to \R^m$ is a
\emph{Busemann $(m,\delta)$-pseudo-strainer map} on $X$
if there exists an $m$-tuple 
$\left( \gamma_1, \dots, \gamma_m \right)$
of rays in $X$ 
with $\varphi = \left( b_{\gamma_1}, \dots, b_{\gamma_m} \right)$
such that
for all $i, j \in \{ 1, \dots, m \}$ with $i \neq j$,
we have
\begin{equation}
\left\vert \angle_x \left( \gamma_i(\infty), \gamma_j(\infty) \right) -
\frac{\pi}{2} \right\vert
< 2\delta.
\label{eqn: bsstrma}
\end{equation}
\end{defn}

From a similar argument on an openness in 
\cite[Lemma 6.3]{nagano6},
we derive the following
(cf.~\cite[Lemma 8.1]{lytchak-nagano1} in the $\GCBA$ setting):

\begin{lem}\label{lem: wwopen}
For $m \in \N$, 
let $\delta \in (0,1)$ satisfy $2(m-1)\delta < 1$.
Let $\varphi \colon X \to \R^m$
be a Busemann $(m,\delta)$-pseudo-strainer map on 
a geodesically complete $\CAT(0)$ space $X$.
Then $\varphi$
is $1/(1-2(m-1)\delta)\sqrt{m}$-open.
In particular,
it is $2/\sqrt{m}$-open,
whenever $4(m-1)\delta < 1$.
\end{lem}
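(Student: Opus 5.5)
The plan is a Newton-type iteration, the usual scheme for proving openness of strainer maps (as in \cite[Lemma 8.1]{lytchak-nagano1} and \cite[Lemma 6.3]{nagano6}). We first prove a one-step estimate: from any point we can move a controlled distance and shrink the error in $\R^m$ by a fixed factor. Summing the resulting geometric series then gives the openness constant $c = 1/\bigl((1-2(m-1)\delta)\sqrt{m}\bigr)$. The \emph{in particular} clause is immediate from this: if $4(m-1)\delta<1$, then $1-2(m-1)\delta>1/2$, so $c<2/\sqrt{m}$.

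\textbf{Step 1 (coordinate moves).} Fix $z\in X$, write $\xi_i:=\gamma_i(\infty)$, and let $v:=y-\varphi(z)$.
\begin{itemize}
\item If $v_i<0$, move along the ray $\gamma_{z\xi_i}$ for length $|v_i|$. By \eqref{eqn: lebelb}, $b_{\gamma_i}$ decreases by exactly $|v_i|$.
\item If $v_i>0$, use geodesic completeness to extend $\gamma_{z\xi_i}$ backwards through $z$, and move a length $v_i$ along this extension. Its starting direction has angle $\pi$ with $(\xi_i)_z'$, so by \eqref{eqn: 1vfb} and the $1$-Lipschitz property, $b_{\gamma_i}$ increases by exactly $v_i$.
\end{itemize}

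\textbf{Step 2 (cross terms).} For $j\neq i$ we must show $b_{\gamma_j}$ changes by at most $2\delta|v_i|$ along such a move. By the first variation formula \eqref{eqn: 1vfb}, this reduces to $|\cos\angle_w(\xi_j,\cdot)|\le 2\delta$ at every point $w$ of the segment. The condition \eqref{eqn: bsstrma} holds at every point of $X$, which gives this bound along the forward ray. For the backward extension, the triangle inequality in $\Sigma_wX$ gives only the lower bound $\angle \ge \pi/2-2\delta$. For the upper bound we use that the extended geodesic passes through $z$, where the forward direction makes an angle within $2\delta$ of $\pi/2$ with $(\xi_j)_z'$. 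Convexity of $b_{\gamma_j}$ makes its one-sided derivatives monotone along the geodesic, so the derivative is trapped in $[-2\delta,2\delta]$ on the whole segment. Performing the $m$ coordinate moves in succession, the total distance travelled is $\sum_i|v_i|$, and the new error $v'$ satisfies $|v'_j|\le 2\delta\sum_{i\neq j}|v_i|$.

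\textbf{Step 3 (iteration and the constant).} Iterating Step 1 produces a sequence $z_k$. By Step 2, the $\ell^1$-norms of the errors satisfy $\|v^{(k+1)}\|_{1}\le 2(m-1)\delta\,\|v^{(k)}\|_{1}$, while the step lengths are $\|v^{(k)}\|_1$. Hence $(z_k)$ is Cauchy, and by completeness of $X$ its limit $z_\infty$ satisfies $\varphi(z_\infty)=y$ and
\[
d_X(x,z_\infty)\le \frac{\|y-\varphi(x)\|_1}{1-2(m-1)\delta}.
\]
The remaining and hardest step is converting this $\ell^1$ bound into the stated constant relative to the Euclidean norm $\|y-\varphi(x)\|<r$. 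Comparing norms gives $\|\cdot\|_1\le\sqrt{m}\,\|\cdot\|$, which yields the factor $\sqrt{m}/(1-2(m-1)\delta)$ rather than $1/\bigl((1-2(m-1)\delta)\sqrt{m}\bigr)$.

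To get the stated constant, the first thing I would try is replacing the coordinate-by-coordinate moves with a single move in the ``diagonal'' direction whose Busemann derivatives are approximately $v/\|v\|$. Such a direction would be built by iterated geodesic combination in the $\CAT(1)$ space $\Sigma_zX$, using the $(m,\delta)$-suspender structure of Lemma~\ref{lem: sphstrinf}, so that each step has length about $\|v\|$.

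I must flag a serious difficulty with this step. Even in the model case $X=\R^m$, with $b_{\gamma_i}=-x_i$, reaching a target at Euclidean distance $r$ requires moving distance exactly $r$. So any openness constant is at least $1$, and a constant below $1$, such as $2/\sqrt{m}$ for $m\ge 5$, cannot hold under the stated definition of $c$-openness. I expect the intended reading involves a different normalisation of the norm on $\R^m$, and I would resolve that before carrying out the constant bookkeeping.
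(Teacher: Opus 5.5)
Your Steps 1--3 are the paper's argument: coordinate moves along the rays $\gamma_{z\xi_i}$, and cross-terms of size at most $2\delta\vert v_i\vert$ from \eqref{eqn: 1vfb} and \eqref{eqn: bsstrma}. These give contraction of the $\ell^1$-error by the factor $2(m-1)\delta$, followed by a geometric series. The paper's proof stops exactly at $(1-2(m-1)\delta)\,d_X(x_0,y_\ast)\le\Vert u_0\Vert_{\ell^1}$. With $\Vert u_0\Vert_{\ell^1}\le\sqrt{m}\,\Vert u_0\Vert$ this is openness with constant $\sqrt{m}/(1-2(m-1)\delta)$. That is the intended reading of the printed $1/(1-2(m-1)\delta)\sqrt{m}$, namely $\frac{1}{1-2(m-1)\delta}\cdot\sqrt{m}$. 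The ``in particular'' clause should then read $2\sqrt{m}$; the $2/\sqrt{m}$ is a misprint. Your $\R^m$ check is correct and shows that the clause as printed fails for $m\ge 5$. The later use of the lemma, in the proof of Lemma~\ref{lem: str1eopen}, only needs some constant $c\in(1,\infty)$ depending on $m$ and $\delta$. So nothing remains after your norm comparison. Drop the ``diagonal direction'' plan: it aims at a constant below $1$, which no argument can reach.

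Two comments on Steps 1--2. Your treatment of $v_i>0$ by extending $\gamma_{z\xi_i}$ backwards is necessary, and it is exactly where geodesic completeness enters. The paper's write-up leaves this implicit: as written it moves only along $x_0\xi_1$, which has the wrong sign when $u_0^1>0$.

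However, your justification of the cross-term bound on the backward piece is not right as stated. Let $\sigma\colon\R\to X$ be the extended line with $\sigma(0)=z$, and put $g:=b_{\gamma_j}\circ\sigma$. Moving backwards means decreasing $t$, and you need $\vert g'\vert\le 2\delta$ on $[-v_i,0]$. The triangle inequality in $\Sigma_{\sigma(t)}X$ and the bound at $z$ combined with monotonicity both give only the upper bound on $g'$ there, e.g.\ $g'_-(t)\le g'_+(0)<2\delta$ for $t\le 0$. Neither gives the lower bound: a convex function with $g'_+(0)=0$ may have slope $-1$ on $(-\infty,-1)$.

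The fix is simple. For every $t\in\R$ the restriction $\sigma\vert_{[t,\infty)}$ is the ray from $\sigma(t)$ to $\xi_i$. So \eqref{eqn: bsstrma} at the point $\sigma(t)$ gives $\vert g'_+(t)\vert=\vert\cos\angle_{\sigma(t)}(\xi_j,\xi_i)\vert<2\delta$ for all $t$. Hence $g$ is $2\delta$-Lipschitz on the whole line, which handles forward and backward moves at once.
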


\begin{proof}
For $m \in \N$,
let $\delta \in (0,1)$ be sufficiently small.
Let $\varphi \colon X \to \R^m$
be a Busemann $(m,\delta)$-pseudo-strainer map
with $\varphi = \left( b_{\gamma_1}, \dots, b_{\gamma_m} \right)$.
For $i \in \{ 1, \dots, m \}$,
let $\xi_i := \gamma_i(\infty)$.
For a fixed point $x_0$ in $X$,
choose $r \in (0,\infty)$.
Take an element $u_0 = (u_0^1, \dots, u_0^m)$
in $\R^m$ with $\Vert u_0 \Vert < r/2$.
The goal is to find a point $y_{\ast}$ in $X$ 
with $\varphi(y_{\ast}) = \varphi(x_0) + u_0$
such that
$(1-2(m-1)\delta) d_X(x_0,y_{\ast}) \le \Vert u_0 \Vert_{\ell^1}$,
where
$d_X$ is the metric of $X$,
and
$\Vert u_0 \Vert_{\ell^1}$
is the $\ell^1$-norm of $u_0$. 
To do this,
we are going to construct $y_{\ast}$ as a limit point of a sequence
$y_1, y_2, \dots$.

First we find $y_1 \in X$ with 
$d_X(x_0,y_1) \le \Vert u_0 \Vert_{\ell^1}$
such that
\begin{equation}
\Vert \varphi(y_1) - (\varphi(x_0) + u_0) \Vert_{\ell^1}
\le 2(m-1) \delta \Vert u_0 \Vert_{\ell^1}.
\label{eqn: wwopen1}
\end{equation}
Let $(e_1,\dots,e_m)$ 
denote the standard orthonormal basis of $\R^m$.
For $u_0^1 \in \R$,
by the property \eqref{eqn: lebelb}
we can choose $x_1 \in X$ 
with $x_1 \in x_0\xi_1$ 
$\vert b_{\gamma_1}(x_1) - b_{\gamma_1}(x_0) \vert = \vert u_0^1 \vert$.
Notice that if $u_0^1 = 0$, then $x_1 = x_0$.
From our assumption \eqref{eqn: bsstrma}
and the first variation formula \eqref{eqn: 1vfb}
for Busemann functions,
we derive
$\vert b_{\gamma_i}(x_1) - b_{\gamma_i}(x_0) \vert < 2\delta \vert u_0^1 \vert$
whenever $i \neq 1$.
This implies
\[
\Vert \varphi(x_1) - (\varphi(x_0) + u_0^1 e_1) \Vert_{\ell^1}
\le 2(m-1) \delta \vert u_0^1 \vert.
\]
Successively,
for $u_0^m \in \R$,
we can choose $x_m \in X$ 
with $x_m \in x_{m-1}\xi_m$ 
and 
$\vert d_{p_m}(x_m) - d_{p_m}(x_{m-1}) \vert = \vert u_0^m \vert$;
in particular,
\[
\Vert
\varphi(x_m) - (\varphi(x_{m-1}) + u_0^m e_m ) \Vert_{\ell^1}
\le 2(m-1) \delta \vert u_0^m \vert.
\]
Put $y_1 := x_m$.
Then
we have $d_X(x_0,y_1) \le \Vert u_0 \Vert_{\ell^1}$
and \eqref{eqn: wwopen1}.

Inductively,
for $y_k \in X$ and $u_k := \varphi(x_0) + u_0 - \varphi(y_k)$,
we can construct 
$y_{k+1} \in X$ with $d_X(y_k,y_{k+1}) \le \Vert u_k \Vert_{\ell^1}$
such that
\[
\Vert \varphi(y_{k+1}) - (\varphi(y_k) + u_k) \Vert_{\ell^1}
\le 2(m-1) \delta \Vert u_k \Vert_{\ell^1}.
\]
For each $k \in \N$, we have
\begin{equation}
\Vert u_{k+1} \Vert_{\ell^1}
\le 2(m-1) \delta \Vert u_k \Vert_{\ell^1}
\le (2(m-1) \delta)^{k+1} \Vert u_0 \Vert_{\ell^1}
\label{eqn: wwopen2}
\end{equation}
\begin{equation}
d_X(x_0,y_{k+1}) \le \sum_{i=0}^kd_X(y_i,y_{i+1})
\le \frac{1-(2(m-1)\delta)^{k+1}}{1-2(m-1)\delta} 
\Vert u_0 \Vert_{\ell^1},
\label{eqn: wwopen3}
\end{equation}
where we set $y_0 := x_0$.
Hence $(y_k)$ is a Cauchy sequence in $B_r(x_0)$.
Let $y_{\ast}$ be the limit of $(y_k)$ in $B_r(x_0)$.
Then \eqref{eqn: wwopen2} implies 
$\varphi(y_{\ast}) = \varphi(x_0) + u_0$,
and \eqref{eqn: wwopen3}
does
$(1-2(m-1)\delta) d_X(x_0,y_{\ast}) \le \Vert u_0 \Vert_{\ell^1}$.
\end{proof}

As a corollary of Lemma \ref{lem: wwopen},
we see:

\begin{lem}\label{lem: wopen}
For $m \in \N$, 
let $\delta \in (0,1)$ satisfy $2(m-1)\delta < 1$.
Then every Busemann $(m,\delta)$-strainer map 
on a geodesically complete $\CAT(0)$ space
is $1/(1-2(m-1)\delta)\sqrt{m}$-open.
In particular,
it is $2/\sqrt{m}$-open,
whenever $4(m-1)\delta < 1$.
\end{lem}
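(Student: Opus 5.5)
The plan is to reduce Lemma \ref{lem: wopen} to Lemma \ref{lem: wwopen}. For this it suffices to show that every Busemann $(m,\delta)$-strainer map is a Busemann $(m,\delta)$-pseudo-strainer map, with the same rays. Let $\varphi = (b_{\gamma_1},\dots,b_{\gamma_m})$ be a Busemann $(m,\delta)$-strainer map on a geodesically complete $\CAT(0)$ space $X$, and put $\xi_i := \gamma_i(\infty)$. Then $(\xi_1,\dots,\xi_m)$ is an $(m,\delta)$-suspender in $\partial_{\T}X$. We must verify \eqref{eqn: bsstrma}, namely $\vert \angle_x(\xi_i,\xi_j) - \pi/2 \vert < 2\delta$ for all $x \in X$ and all distinct $i,j$.

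Fix $x \in X$. By Lemma \ref{lem: sphstrinf}, the tuple $((\xi_1)_x',\dots,(\xi_m)_x')$ is an $(m,\delta)$-suspender in $\Sigma_xX$, with opposite tuple $((\eta_1)_x',\dots,(\eta_m)_x')$. The angle $\angle_x(\xi_i,\xi_j)$ is exactly the distance in $\Sigma_xX$ between $(\xi_i)_x'$ and $(\xi_j)_x'$. The upper bound $\angle_x(\xi_i,\xi_j) < \pi/2 + \delta < \pi/2 + 2\delta$ is then immediate from \eqref{eqn: dortho}.

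For the lower bound I would apply Lemma \ref{lem: omdsusp}(2) in $\Sigma_xX$, which gives $\angle_x(\xi_i,\xi_j) > \pi/2 - 2\delta$. This needs $\Sigma_xX$ to be penetrable. That holds because $X$ is geodesically complete: every geodesic $xy$ extends beyond $x$, so each direction of the form $y_x'$ has an antipode at angle $\pi$. Such directions are dense in $\Sigma_xX$, so $\Sigma_xX$ is penetrable. Note that $\Sigma_xX$ need not itself be geodesically complete when $X$ is not proper, so I use this direct density argument rather than appealing to geodesic completeness of $\Sigma_xX$. Alternatively, one could avoid the issue altogether by applying Lemma \ref{lem: omdsusp} in $\partial_{\T}X$, which is penetrable by geodesic completeness of $X$, and then passing to angles at $x$ using $\angle_x \le \angle$. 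However, that only controls the angle from above, so the argument inside $\Sigma_xX$ is the one to use.

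Combining the two bounds gives \eqref{eqn: bsstrma}, so $\varphi$ is a Busemann $(m,\delta)$-pseudo-strainer map. Lemma \ref{lem: wwopen} then yields the openness constant $1/(1-2(m-1)\delta)\sqrt{m}$, together with the special case $2/\sqrt{m}$ when $4(m-1)\delta<1$. The only step requiring care is the penetrability of $\Sigma_xX$ needed to invoke Lemma \ref{lem: omdsusp}; everything else is bookkeeping.
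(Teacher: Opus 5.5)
Your proposal is correct and follows the paper's proof. The paper likewise deduces the lemma by noting, via Lemma~\ref{lem: omdsusp}, that every Busemann $(m,\delta)$-strainer map is a Busemann $(m,\delta)$-pseudo-strainer map, and then applying Lemma~\ref{lem: wwopen}. Your added details fill in what that one-line proof leaves implicit: you apply Lemma~\ref{lem: omdsusp} in $\Sigma_xX$ via Lemma~\ref{lem: sphstrinf}, and you check that $\Sigma_xX$ is penetrable because geodesic directions have antipodes.
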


\begin{proof}
By Lemma \ref{lem: omdsusp},
every Busemann $(m,\delta)$-strainer map is
a Busemann $(m,\delta)$-pseudo-strainer map.
From Lemma \ref{lem: wwopen},
we conclude the lemma.
\end{proof}

\subsection{Regularity of strainer maps at infinity}

We prove that every Busemann strainer map
is almost a submetry
(cf.~\cite[Subsection 8.3]{lytchak-nagano1}).

\begin{prop}
\label{prop: almsubm}
For every $\epsilon \in (0,1)$,
and for every $m \in \N$,
there exists $\delta \in (0,1)$ satisfying the following:
Let $X$ be a geodesically complete $\CAT(0)$ space.
Let $\varphi \colon X \to \R^m$ be 
a Busemann $(m,\delta)$-strainer map on $X$.
Then $\varphi$ is $(1+\epsilon)$-Lipschitz and $(1+\epsilon)$-open.
In particular,
for every $x \in X$
the directional derivative $D_x\varphi \colon T_xX \to \R^m$
of $\varphi$ at $x$
is $(1+\epsilon)$-Lipschitz and $(1+\epsilon)$-open.
\end{prop}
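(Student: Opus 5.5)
The plan is to reduce both statements to properties of the differential $D_x\varphi$, and to control $D_x\varphi$ through an ultralimit argument in the spirit of Lemma \ref{lem: aperp}, using Lemma \ref{lem: sphstrinf} to transfer strainers at infinity to suspenders in $\Sigma_xX$. One caveat matters from the start: Lemma \ref{lem: fullsusp} and Lemma \ref{lem: aperp} need a dimension bound. I would therefore first run the argument for $m$ equal to the dimension bound. For general $m$ I would use the splitting in Proposition \ref{prop: m0susp} in the limit, $\Sigma_\omega = \Sph^{m-1}\ast C^\perp$, which does not need $\dim \le m-1$.

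\textbf{Lipschitz bound.} It suffices to show $\Vert D_x\varphi(v)\Vert \le 1+\epsilon$ for every unit direction $v\in\Sigma_xX$, up to an error of order $\sqrt m\,\delta$. Then Lemma \ref{lem: 1stvfbstr0} along the geodesic $xy$ gives
\[
\Vert\varphi(y)-\varphi(x)\Vert \le \bigl(\Vert(\varphi\circ\gamma)^+(0)\Vert+2\sqrt m\,\delta\bigr)\,d_X(x,y).
\]
The bound $\sum_i\cos^2\angle_x((\xi_i)'_x,v)\le 1+\epsilon$ is a pointwise statement about the $\CAT(1)$ space $\Sigma_xX$, which is penetrable since $X$ is geodesically complete. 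I would prove it by contradiction. Take $\Sigma_k$ with $(m,\delta_k)$-suspenders $(p_{i,k})$, opposite $(q_{i,k})$, and $\delta_k\to0$, and pass to the ultralimit. By Lemma \ref{lem: omdsusp}, the limit $(p_{i,\omega})$ is a rigid $m$-suspender, so Proposition \ref{prop: m0susp} gives $\Sigma_\omega=\Sph^{m-1}\ast C^\perp$. For $v=\cos t\,s+\sin t\,w$ with $s\in\Sph^{m-1}$ and $w\in C^\perp$, the join metric gives $\cos d(p_i,v)=\cos t\,\langle p_i,s\rangle$. Hence $\sum_i\cos^2 d(p_i,v)=\cos^2 t\le 1$, contradicting the assumption for $\omega$-large $k$.

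\textbf{Openness.} I would repeat the iteration of Lemma \ref{lem: wwopen}, but with a better one-step move. Given a target increment $u$, I would find a direction $v$ at the current point $y$ with $D_y\varphi(\Vert u\Vert v)$ within $\epsilon'\Vert u\Vert$ of $u$. In the limit model this is possible: take $s=-u/\Vert u\Vert\in\Sph^{m-1}$. The difficulty is that $s$ lives only in the ultralimit, so I need an actual direction in $\Sigma_yX$ realizing approximately the same distances to the $(\xi_i)'_y$. I would argue by contradiction with ultralimits again. Points of $\Sigma_\omega$ are limits of points of $\Sigma_k$, and geodesic completeness of $X$ lets such a direction be realized by a geodesic segment from $y$ of length $\Vert u\Vert$.

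Moving along that segment, Lemma \ref{lem: 1stvfbstr0} gives
\[
\Vert\varphi(y')-\varphi(y)-u\Vert\le(\epsilon'+2\sqrt m\,\delta)\Vert u\Vert \quad\text{with } d(y,y')=\Vert u\Vert.
\]
Iterating with ratio $\theta=\epsilon'+2\sqrt m\,\delta<1$ produces a Cauchy sequence whose limit $y_\ast$ satisfies $\varphi(y_\ast)=\varphi(x)+u_0$ and $d(x,y_\ast)\le\Vert u_0\Vert/(1-\theta)$. This is $(1+\epsilon)$-openness once $\theta$ is small. Completeness of $X$ is needed for the limit, as in Lemma \ref{lem: wwopen}.

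\textbf{Differential and main obstacle.} The claim for $D_x\varphi$ follows by applying the same two arguments to $T_xX$. By Lemma \ref{lem: sphstrinf}, $D_x\varphi$ is a map of the same type on the cone $C_0(\Sigma_xX)$ with suspender directions. Alternatively it is a blow-up limit of $\varphi$, using the ultralimit stability of Lipschitz and open constants. The main obstacle is the uniformity in the approximation step for openness, namely producing the direction $v$ in $\Sigma_yX$ with $\delta$ depending only on $m$ and $\epsilon$. The contradiction argument handles this, because the limit space always contains the round $\Sph^{m-1}$ factor.
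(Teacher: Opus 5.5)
Your proposal is correct. The Lipschitz half is essentially the paper's argument, but the openness half takes a genuinely different route.

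\textbf{Lipschitz half.} Lemma \ref{lem: almuspd} bounds $D_x\varphi$ on unit directions by the same ultralimit contradiction you use. It is phrased via Lemma \ref{lem: busemann}: in the limit, $-D_x\varphi$ is the projection of $C_0(\Sph^{m-1}\ast C^{\perp})$ onto its $\R^m$ factor, which is your join computation in cone form. Lemma \ref{lem: str1elip} then integrates along $xy$, where you instead apply Lemma \ref{lem: 1stvfbstr0} once.

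\textbf{Openness half.} The paper proves infinitesimal openness in Lemma \ref{lem: str1eopen}. The ultralimit gives a unit vector $\zeta_k$ with $(D_{x_k}\varphi_k)(\zeta_k)\approx u_k$. This is corrected to an exact preimage using the crude $c$-openness from Lemma \ref{lem: wopen}. The passage from $T_xX$ to $X$ is then handed to Lytchak's open map theorem.

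You keep only the approximate preimage and integrate by hand. The error in Lemma \ref{lem: 1stvfbstr0} is $2\sqrt{m}\,\delta$ times the length of the geodesic, however long it is. So one full-length step along an extended geodesic contracts the residual by $\theta$, and the iteration of Lemma \ref{lem: wwopen} yields $1/(1-\theta)$-openness directly.

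Your version is self-contained: it needs neither Lemma \ref{lem: wopen} nor an external open map theorem. The price is that it depends on the length-independent almost-affinity of Busemann coordinates, which is special to strainers at infinity. The paper's scheme is modular (an infinitesimal statement plus a general integration theorem). That pattern still applies where first-variation control is only local, as for distance-coordinate strainer maps on $\GCBA$ spaces.

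\textbf{Two minor points.}
\begin{enumerate}
\item A direction in the completion $\Sigma_yX$ need not be tangent to a geodesic. So first replace $v$ by a nearby geodesic direction before extending; this is harmless, since $v\mapsto(D_y\varphi)(v)$ is $\sqrt{m}$-Lipschitz.
\item For the claim about $D_x\varphi$, your blow-up alternative gives only the Lipschitz half. The reason is that $T_xX$ can be a proper subset of the ultratangent space, and openness does not pass to restrictions. Rely instead on your first alternative, treating $D_x\varphi$ as a Busemann map on $C_0(\Sigma_xX)$, as the paper implicitly does in Lemma \ref{lem: str1eopen}.
\end{enumerate}
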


For the proof of Proposition \ref{prop: almsubm},
we prepare the following lemmas.

First we examine a rigid case.

\begin{lem}\label{lem: busemann}
Let $Z$ be a $\CAT(1)$ space.
Let $(\xi_1,\dots,\xi_m)$ be an $m$-suspender in $Z$.
For $i \in \{ 1, \dots, m \}$,
let $\gamma_i \colon [0,\infty) \to C_0(Z)$ be the ray in 
the Euclidean cone $C_0(Z)$ over $Z$
from the vertex $0$ toward $\xi_i$ 
defined by $\gamma_i(t) := t\xi_i$.
Then the map $\varphi \colon C_0(Z) \to \R^m$
defined by $\varphi := \left( b_{\gamma_1}, \dots, b_{\gamma_m} \right)$
is a homogenous submetry.
\end{lem}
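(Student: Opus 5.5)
By Proposition \ref{prop: m0susp}, $Z$ is isometric to the spherical join $C_{m-1} \ast C_{m-1}^{\perp}$ with $C_{m-1}$ isometric to $\Sph^{m-1}$ and containing $\xi_1,\dots,\xi_m$ together with the opposite points. Since the Euclidean cone over a spherical join is the Euclidean product of the cones, $C_0(Z)$ is isometric to $C_0(C_{m-1}) \times C_0(C_{m-1}^{\perp}) = \R^m \times Y$, with $Y := C_0(C_{m-1}^{\perp})$ (read as a point if the polar set is empty). Under this identification the rays $\gamma_i(t) = t\xi_i$ become $t \mapsto (t e_i', 0)$, where $e_i'$ is the unit vector of $\R^m$ corresponding to $\xi_i \in C_{m-1} \cong \Sph^{m-1}$.

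The vectors $e_1',\dots,e_m'$ form an orthonormal basis of $\R^m$. Indeed, $d_Z(\xi_i,\xi_j)=\pi/2$ for $i\neq j$ by the definition of an $m$-suspender, so $\langle e_i', e_j' \rangle = \cos(\pi/2) = 0$.

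Next I compute the Busemann functions. On a Euclidean product $\R^m \times Y$, the Busemann function of the ray $t\mapsto (te,0)$ with $\Vert e\Vert=1$ is $(u,y) \mapsto -\langle u, e\rangle$. This follows directly from
\[
\sqrt{\Vert u - te\Vert^2 + d_Y(y,0)^2} - t \to -\langle u,e\rangle
\]
as $t\to\infty$. Hence $\varphi(u,y) = -(\langle u,e_1'\rangle,\dots,\langle u,e_m'\rangle)$, which is the composition of the projection $\R^m\times Y\to\R^m$ with a linear isometry of $\R^m$.

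Projection from a Euclidean product onto a factor is a submetry: it maps each ball $B_r((u,y))$ onto $B_r(u)$, since it is $1$-Lipschitz and the point $(u',y)$ lies in the ball over any $u'$ with $\Vert u'-u\Vert\le r$. Composing with an isometry preserves this property. The map is also homogeneous: $\varphi(tv)=t\varphi(v)$ for the cone dilations, which in product coordinates act by $(u,y)\mapsto(tu,ty)$, and $\varphi(0)=0$.

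The step that needs the most care is the identification $C_0(A\ast B)\cong C_0(A)\times C_0(B)$, together with the location of the rays $\gamma_i$ inside the $\R^m$ factor. This is standard: write $tz$ with $z=\cos\theta\, a+\sin\theta\, b$ in join coordinates and map it to $(t\cos\theta\, a, t\sin\theta\, b)$, then check the cone metric formula using the $\pi$-truncation. The only subtlety is the degenerate case $C_{m-1}^{\perp}=\emptyset$, where $Z=C_{m-1}$ and $C_0(Z)=\R^m$ directly.
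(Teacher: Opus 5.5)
Your proposal is correct and follows the paper's route: use Proposition \ref{prop: m0susp} to split $C_0(Z)$ as $\R^m \times C_0(C_{m-1}^{\perp})$, then recognize $\varphi$ as the projection onto the flat factor, up to sign and the orthonormal change of coordinates given by $\xi_1,\dots,\xi_m$. The only difference is cosmetic: the paper does the Busemann computation in cone coordinates, $\varphi(s\xi) = -s\left(\cos d_Z(\xi_1,\xi),\dots,\cos d_Z(\xi_m,\xi)\right)$, whereas you work in product coordinates and make the orthonormality of the $e_i'$ explicit.
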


\begin{proof}
By Proposition \ref{prop: m0susp},
there exists a round, closed $\pi$-convex subset $C_{m-1}$
of $Z$
containing $(\xi_1,\dots,\xi_m)$
such that $Z$ isometrically splits as 
$C_{m-1} \ast C_{m-1}^{\perp}$,
where
$C_{m-1}$ is isometric to $\Sph^{m-1}$,
and 
$C_{m-1}^{\perp}$ is the polar set of $C_{m-1}$.
The Euclidean cone $C_0(Z)$ isometrically splits as 
$\R^m \times C_0 \left( Y_{m-1}^{\perp} \right)$.
Observe that for each $s\xi \in C_0(Z)$, we have
\[
\varphi(s\xi) = 
-s 
\left( \cos d_Z(\xi_1, \xi), \dots, \cos d_Z(\xi_m, \xi) \right),
\]
where $d_Z$ is the metric on $Z$.
Hence the map
$-\varphi \colon C_0(Z) \to \R^m$
defined by $(-\varphi)(s\xi) := -\varphi(s\xi)$
coincides with 
the projection onto the $m$-dimensional Euclidean flat factor 
$\R^m$ in $C_0(Z)$.
Therefore $-\varphi$ is a submetry,
and hence so is $\varphi$.
The homogeneity follows from the definition of $\varphi$. 
\end{proof}

We next show the following:

\begin{lem}\label{lem: almuspd} 
For every $\epsilon \in (0,1)$, and for every $m \in \N$,
there exists $\delta \in (0,1)$ satisfying the following:
Let $X$ be a geodesically complete $\CAT(0)$ space.
Let $\varphi \colon X \to \R^m$
be a Busemann $(m,\delta)$-strainer map on $X$.
Then for every $x \in X$, and for every $\zeta_x \in \Sigma_xX$,
we have
$\Vert \left( D_x\varphi \right) \left( \zeta_x \right) \Vert < 1+\epsilon$.
\end{lem}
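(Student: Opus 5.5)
The estimate reduces to a statement about $\CAT(1)$ spaces alone, which I will prove by an ultralimit contradiction argument. By the formula for the differential,
\[
\left\Vert \left( D_x\varphi \right)(\zeta_x) \right\Vert^2 = \sum_{i=1}^m \cos^2 \angle_x \left( (\xi_i)_x', \zeta_x \right),
\]
where $\xi_i := \gamma_i(\infty)$. By Lemma \ref{lem: sphstrinf}, the tuple $((\xi_1)_x',\dots,(\xi_m)_x')$ is an $(m,\delta)$-suspender in $\Sigma_xX$. Moreover, $\Sigma_xX$ is a $\CAT(1)$ space. Since $X$ is geodesically complete, every direction has an antipode at distance $\pi$ along the extended geodesics, and these directions are dense, so $\Sigma_xX$ is penetrable. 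It therefore suffices to prove the following claim.

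\emph{Claim.} For every $\epsilon$ and $m$ there is a $\delta$ such that whenever a penetrable $\CAT(1)$ space $Z$ admits an $(m,\delta)$-suspender $(p_1,\dots,p_m)$, every $z \in Z$ satisfies $\sum_i \cos^2 d_Z(p_i,z) < (1+\epsilon)^2$.

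Note that no dimension bound is needed here. That matters, because no dimension assumption on $X$ appears in the statement.

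To prove the claim, suppose it fails. Then there are $\delta_k \to 0$, penetrable $\CAT(1)$ spaces $Z_k$ with $(m,\delta_k)$-suspenders $(p_{i,k})$ and opposite tuples $(q_{i,k})$, and points $z_k$ with $\sum_i \cos^2 d_{Z_k}(p_{i,k},z_k) \ge (1+\epsilon)^2$. I take the ultralimit $Z_\omega$ of $(Z_k,z_k)$. This is a $\CAT(1)$ space, and it is penetrable because penetrability is closed under ultralimits. The limit points $p_{i,\omega}$, $q_{i,\omega}$ and $z_\omega$ exist since all diameters are at most about $\pi$. By the defining inequalities \eqref{eqn: dsph} and \eqref{eqn: dortho} together with Lemma \ref{lem: omdsusp}, the limit tuples satisfy the rigid identities:
\begin{itemize}
\item $d(p_{i,\omega},z)+d(z,q_{i,\omega})=\pi$ for every $z$ in $Z_\omega$;
\item all the mixed distances equal $\pi/2$.
\end{itemize}
Here the first identity must hold for \emph{all} $z \in Z_\omega$, not just for the limits of the $z_k$. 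This is fine because every point of $Z_\omega$ is a limit of points of the $Z_k$, and the $\sup$ and $\inf$ bounds are uniform. Hence $(p_{1,\omega},\dots,p_{m,\omega})$ is an $m$-suspender in $Z_\omega$.

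By Proposition \ref{prop: m0susp}, $Z_\omega$ splits as $\Sph^{m-1} \ast C^\perp$ with the $p_{i,\omega}$ forming an orthonormal frame of the sphere factor. Equivalently, by Lemma \ref{lem: busemann}, $\varphi$ on $C_0(Z_\omega)$ is, up to sign, the projection to the $\R^m$ factor, hence $1$-Lipschitz. Writing $z_\omega = \cos\theta\, s + \sin\theta\, c$ in join coordinates with $s \in \Sph^{m-1}$, we get $\cos d(p_{i,\omega},z_\omega) = \cos\theta \,\langle p_{i,\omega}, s \rangle$, so
\[
\sum_i \cos^2 d(p_{i,\omega},z_\omega) = \cos^2\theta \le 1.
\]
Since the cosine of the distance passes to the ultralimit, this contradicts the lower bound $(1+\epsilon)^2$ for $\omega$-large $k$.

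The main obstacle is justifying that the limit tuple is a genuine $m$-suspender in $Z_\omega$, specifically the suspender identity at every point of $Z_\omega$. This requires both the uniform upper bound \eqref{eqn: dsph} and the lower bound from Lemma \ref{lem: omdsusp}, and the latter in turn relies on penetrability of $Z_k = \Sigma_xX$, which is exactly where geodesic completeness of $X$ enters. Once that identity is in place, the remaining steps are routine.
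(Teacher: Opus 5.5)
Your proof is correct and is essentially the paper's argument. Both run an ultralimit contradiction in which the limit tuple becomes a rigid $m$-suspender (via \eqref{eqn: dsph}, \eqref{eqn: dortho} and Lemma \ref{lem: omdsusp}), after which Proposition \ref{prop: m0susp} bounds the norm by $1$.

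The differences are cosmetic. You pass to the limit of $\Sigma_xX$ and compute $\sum_i \cos^2 d(p_{i,\omega},z_\omega)=\cos^2\theta$ in join coordinates. The paper instead passes to the limit of $T_xX=C_0(\Sigma_xX)$ and invokes the submetry property of Lemma \ref{lem: busemann}. Your explicit check that $\Sigma_xX$ is penetrable fills in a point the paper leaves implicit.
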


\begin{proof}
Suppose that for some sequence $(\delta_k)$ in $(0,\infty)$ 
with $\delta_k \to 0$,
there exists a sequence of $(m,\delta_k)$-strainer maps
$\varphi_k \colon X_k \to \R^m$
on geodesically complete $\CAT(0)$ spaces $X_k$
such that for some $\zeta_{x_k} \in \Sigma_{x_k}X_k$ 
we have
\begin{equation}
\Vert \left( D_{x_k}\varphi_k \right) \left( \zeta_{x_k} \right) \Vert 
\ge 1+\epsilon.
\label{eqn: almuspd1}
\end{equation}
For each $k \in \N$,
take the $m$-tuple 
$\left( \gamma_{1,k}, \dots, \gamma_{m,k} \right)$
of the rays in $X_k$
with
$\varphi_k = \left( b_{\gamma_{1,k}}, \dots, b_{\gamma_{m,k}} \right)$,
and for each $i \in \{ 1, \dots, m\}$
put $\xi_{i,k} := \gamma_{i,k}(\infty)$.
From Lemma \ref{lem: sphstrinf} it follows that
the $m$-tuple $\left( (\xi_{1,k})_{x_k}', \dots, (\xi_{m,k})_{x_k}' \right)$
is an $(m,\delta_k)$-suspender in $\Sigma_{x_k}X_k$.

Let 
$\left( \ulim T_{x_k}X_k, 0_{\omega} \right)$
be the ultralimit of the sequence $\left( T_{x_k}X_k, 0_{x_k} \right)$
of the pointed metric spaces,
and 
$\left( \ulim \Sigma_{x_k}X_k, \zeta_{\omega} \right)$
the ultralimit of the sequence $\left( \Sigma_{x_k}X_k, \zeta_{x_k} \right)$.
Since each $T_{x_k}X_k$ is defined as
$C_0 \left( \Sigma_{x_k}X_k \right)$,
the ultralimit $\left( \ulim T_{x_k}X_k, 0_{\omega} \right)$
is isometric to $\left( C_0 \left( \ulim \Sigma_{x_k}X_k \right), 0 \right)$.
From our present assumption and Lemma \ref{lem: omdsusp},
we see that
$\ulim \Sigma_{x_k}X_k$ admits an $m$-fold suspender 
$\left( \xi_{1,\omega}, \dots, \xi_{m,\omega} \right)$.

Take the ultralimit $\ulim D_{x_k}\varphi_k$
of the sequence of the maps $D_{x_k}\varphi_k$
defined on $\ulim T_{x_k}X_k$.
Then $\ulim D_{x_k}\varphi_k$ coincides with 
$\left( b_{\gamma_1}, \dots, b_{\gamma_m} \right)$, 
where $b_{\gamma_i}$ is the Busemann function on 
$C_0 \left( \ulim \Sigma_{x_k}X_k \right)$ 
along the ray $\gamma_i$ emanating from $0$ toward $\xi_{i,\omega}$.
By Lemma \ref{lem: busemann},
the map $\ulim D_{x_k}\varphi_k$ is a submetry.
For the point
$\zeta_{\omega}$ in 
$\ulim \Sigma_{x_k}X_k$ contained in $\ulim T_{x_k}X_k$
with $\zeta_{x_k} \to \zeta_{\omega}$.
Since $\ulim D_{x_k}\varphi_k$ is a submetry,
we have
\[
\left\Vert \left( \ulim D_{x_k}\varphi_k \right) \left( \zeta_{\omega} \right) \right\Vert \le 1.
\]
Hence $\Vert \left( D_{x_k}\varphi_k \right) \left( \zeta_{x_k} \right) \Vert 
< 1+\epsilon$
for $\omega$-large $k$.
This contradicts \eqref{eqn: almuspd1}.
\end{proof}

Lemma \ref{lem: almuspd} leads to the following:

\begin{lem}\label{lem: str1elip}
For every $\epsilon \in (0,1)$, and for every $m \in \N$,
there exists $\delta \in (0,1)$ satisfying the following:
Let $X$ be a geodesically complete $\CAT(0)$ space.
Let $\varphi \colon X \to \R^m$
be a Busemann $(m,\delta)$-strainer map on $X$.
Then $\varphi$ is $(1+\epsilon)$-Lipschitz.
\end{lem}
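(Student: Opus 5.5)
To prove Lemma \ref{lem: str1elip}, I would integrate the infinitesimal bound of Lemma \ref{lem: almuspd} along geodesics. Given $\epsilon \in (0,1)$ and $m \in \N$, I choose $\delta$ as provided by Lemma \ref{lem: almuspd} for this $\epsilon$ and $m$. Let $\varphi = (b_{\gamma_1},\dots,b_{\gamma_m})$ be a Busemann $(m,\delta)$-strainer map on $X$. Take distinct points $x,y \in X$ and let $\gamma \colon [0,\ell] \to X$ be the unit speed geodesic from $x$ to $y$, where $\ell = d_X(x,y)$.

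Each coordinate $b_{\gamma_i}\circ\gamma$ is $1$-Lipschitz on $[0,\ell]$, hence absolutely continuous. Therefore $\varphi\circ\gamma$ is Lipschitz, and
\[
\varphi(y)-\varphi(x) = \int_0^\ell (\varphi\circ\gamma)'(t)\,dt,
\]
where the derivative exists for almost every $t$. Wherever it exists, it coincides with the right derivative $(\varphi\circ\gamma)^+(t)$.

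By the first variation formula \eqref{eqn: 1vfb}, applied to the subgeodesic of $\gamma$ starting at $\gamma(t)$, we have
\[
(\varphi\circ\gamma)^+(t) = (D_{\gamma(t)}\varphi)(\zeta_t),
\]
where $\zeta_t \in \Sigma_{\gamma(t)}X$ is the starting direction of $\gamma|_{[t,\ell]}$. Lemma \ref{lem: almuspd} then gives $\Vert(\varphi\circ\gamma)^+(t)\Vert < 1+\epsilon$ for all $t \in [0,\ell)$. Since the norm of an integral is at most the integral of the norm,
\[
\Vert\varphi(y)-\varphi(x)\Vert \le \int_0^\ell \Vert(\varphi\circ\gamma)'(t)\Vert\,dt \le (1+\epsilon)\,\ell = (1+\epsilon)\,d_X(x,y).
\]
This proves that $\varphi$ is $(1+\epsilon)$-Lipschitz.

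The step needing the most care is identifying the almost-everywhere derivative of $\varphi\circ\gamma$ with $D_{\gamma(t)}\varphi(\zeta_t)$. This follows from the existence of right derivatives given by \eqref{eqn: 1vfb}, together with Lebesgue differentiation for Lipschitz functions, so it is not a real obstacle.

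An alternative that avoids measure theory is to subdivide $[0,\ell]$ finely. One would then bound each increment using the difference quotient estimate of Lemma \ref{lem: 1stvfbstr0}, but that lemma only yields a bound with an extra error of order $2\sqrt{m}\delta$. The integration argument is therefore the one I would use.
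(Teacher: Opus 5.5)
Your proposal is correct and is essentially the paper's proof. The paper likewise takes the geodesic from $x$ to $y$ and identifies $(\varphi\circ\gamma)^+(t)$ with $(D_{\gamma(t)}\varphi)(y'_{\gamma(t)})$. It then bounds $\Vert\varphi(x)-\varphi(y)\Vert$ by the integral of this norm and applies Lemma \ref{lem: almuspd} pointwise.
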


\begin{proof}
Let $\delta \in (0,1)$ be small enough.
For each pair of distinct points $x, y$ in $X$,
let $\gamma \colon [a,b] \to X$ be 
a (unique) minimizing geodesic from $x$ to $y$.
For the Lipschitz curve $\varphi \circ \gamma$ in $\R^m$, 
and for each $t \in [a,b)$,
we have 
\[
\left\Vert \left( \varphi \circ \gamma \right)^+(t) \Vert 
= \Vert \left( D_{\gamma(t)}\varphi \right) \left( y_{\gamma(t)}' \right) \right\Vert.
\]
From Lemma \ref{lem: almuspd},
we derive
\[
\left\Vert \varphi(x) - \varphi(y) \right\Vert \le 
\int_a^b
\left\Vert \left( \varphi \circ \gamma \right)^+(t) \right\Vert dt
< \left( 1+\epsilon \right) d_X(x,y),
\]
where $d_X$ is the metric on $X$.
Hence $\varphi$ is $(1+\epsilon)$-Lipschitz.
\end{proof}

Next we prove the following:

\begin{lem}\label{lem: str1eopen}
For every $\epsilon \in (0,1)$, and for every $m \in \N$,
there exists $\delta \in (0,1)$ satisfying the following:
Let $X$ be a geodesically complete $\CAT(0)$ space.
Let $\varphi \colon X \to \R^m$
be a Busemann $(m,\delta)$-strainer map on $X$.
Then for every $u \in \Sph^{m-1}$,
and for every $x \in X$,
we find $\eta_x \in T_xX$ with 
$\left( D_x\varphi \right) \left( \eta_x \right) = u$
such that
$\left\vert \eta_x \right\vert < 1+\epsilon$.
\end{lem}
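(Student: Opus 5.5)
We argue by contradiction with an ultralimit, as in the proof of Lemma \ref{lem: almuspd}. Suppose that for some sequence $(\delta_k)$ with $\delta_k \to 0$ there are Busemann $(m,\delta_k)$-strainer maps $\varphi_k = \left( b_{\gamma_{1,k}}, \dots, b_{\gamma_{m,k}} \right)$ on geodesically complete $\CAT(0)$ spaces $X_k$, points $x_k \in X_k$, and unit vectors $u_k \in \Sph^{m-1}$. Suppose further that no $\eta \in T_{x_k}X_k$ with $\vert \eta \vert < 1+\epsilon$ satisfies $\left( D_{x_k}\varphi_k \right)(\eta) = u_k$. Passing to the $\omega$-limit, we get $u_k \to u_\omega \in \Sph^{m-1}$.

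By Lemma \ref{lem: sphstrinf}, the directions $\left( (\xi_{1,k})_{x_k}', \dots, (\xi_{m,k})_{x_k}' \right)$ form an $(m,\delta_k)$-suspender in $\Sigma_{x_k}X_k$. Each $\Sigma_{x_k}X_k$ is penetrable, since $X_k$ is geodesically complete. Hence, by Lemma \ref{lem: omdsusp}, their limits form an $m$-suspender $\left( \xi_{1,\omega}, \dots, \xi_{m,\omega} \right)$ in $\ulim \Sigma_{x_k}X_k$. Moreover, $\ulim T_{x_k}X_k$ is $C_0\left( \ulim \Sigma_{x_k}X_k \right)$, and $\ulim D_{x_k}\varphi_k$ is the Busemann map $\Phi := \left( b_{\gamma_1}, \dots, b_{\gamma_m} \right)$ along the rays $t\xi_{i,\omega}$. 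By Lemma \ref{lem: busemann}, $\Phi$ is a homogeneous submetry; concretely, $-\Phi$ is the projection onto the $\R^m$ factor. So there is $\eta_\omega$ in this $\R^m$ factor with $\Phi(\eta_\omega) = u_\omega$ and $\vert \eta_\omega \vert = 1$. We may take $\eta_\omega = s\,\zeta_\omega$ with $s = 1$ and $\zeta_\omega$ in the round sphere $C_{m-1}$ spanned by the suspender.

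Next we realize $\eta_\omega$ approximately in each $T_{x_k}X_k$. Choose $\zeta_k \in \Sigma_{x_k}X_k$ with $\zeta_k \to \zeta_\omega$. Then $\left( D_{x_k}\varphi_k \right)(\zeta_k) \to u_\omega$, so $w_k := u_k - \left( D_{x_k}\varphi_k \right)(\zeta_k)$ satisfies $\Vert w_k \Vert \to 0$ along $\omega$.

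The main obstacle is correcting this small error exactly while controlling the norm, since $T_{x_k}X_k$ is not a vector space. For this we use openness of the differential. $D_{x_k}\varphi_k$ is itself a Busemann $(m,\delta_k)$-strainer map on the geodesically complete $\CAT(0)$ cone $T_{x_k}X_k$: its rays are $t(\xi_{i,k})_{x_k}'$, the cone's Tits boundary contains $\Sigma_{x_k}X_k$ with angle metric, and the suspender estimates persist. Here we should check that $\delta_k < \pi/2$ keeps angles and Tits distances equal. By Lemma \ref{lem: wopen}, $D_{x_k}\varphi_k$ is $2/\sqrt{m}$-open for large $k$. Applying openness at the point $\zeta_k$ with target $u_k = D_{x_k}\varphi_k(\zeta_k) + w_k$ gives $\eta_k$ with $D_{x_k}\varphi_k(\eta_k) = u_k$ and $d(\zeta_k,\eta_k) \le (2/\sqrt{m})\Vert w_k \Vert$ (up to an arbitrarily small slack). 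Therefore
\[
\vert \eta_k \vert \le 1 + \frac{2}{\sqrt{m}} \Vert w_k \Vert < 1+\epsilon
\]
for $\omega$-large $k$, contradicting the assumption. Note that this openness step also yields the required $\eta_x$ for the lemma as stated, with the constant $\delta$ furnished by the contradiction argument.

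If verifying that $D_{x_k}\varphi_k$ is a Busemann strainer map on the cone proves awkward, we will instead apply Lemma \ref{lem: wwopen} directly. It only needs the pairwise angle condition \eqref{eqn: bsstrma} at every point of the cone. Those angles are computed in the cone between the boundary points $(\xi_{i,k})_{x_k}'$, and they are bounded below by the Tits-type distances in $\Sigma_{x_k}X_k$ and above by $\pi/2+\delta_k$ via Lemma \ref{lem: omdsusp}.
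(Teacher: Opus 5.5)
Your proposal is essentially the paper's own proof: argue by contradiction with ultralimits of the tangent cones $C_0(\ulim \Sigma_{x_k}X_k)$, use Lemma \ref{lem: busemann} to get a unit preimage $\zeta_\omega$ of $u_\omega$, lift it to $\zeta_k \in \Sigma_{x_k}X_k$, and absorb the small error $u_k - (D_{x_k}\varphi_k)(\zeta_k)$ with the uniform $c$-openness of $D_{x_k}\varphi_k$, which the paper, like you, takes from Lemma \ref{lem: wopen}. Your worry about justifying that openness on $T_{x_k}X_k$ is legitimate, since the paper does not address it. In your fallback, however, the lower bound $\angle_y > \pi/2 - 2\delta_k$ at points $y \neq 0$ of the cone cannot come from the Tits distances, because these only bound the angles $\angle_y$ from above. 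It has to come from the opposite points $(\eta_{j,k})_{x_k}'$, via an estimate $\angle_y(\xi_j,\eta_j) \approx \pi$ at $y$, as in the proof of Lemma \ref{lem: almflattri}.
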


\begin{proof}
Let $\delta \in (0,1)$ be sufficiently small.
As seen in Lemma \ref{lem: wopen},
for each $x \in X$ the directional derivative 
$D_x\varphi \colon T_xX \to \R^m$ of $\varphi$ at $x$
is $c$-open for some constant $c \in (1,\infty)$
depending only on $m$ and $\delta$;
in particular, 
for every $u \in \Sph^{m-1}$,
we find $\eta_x \in T_xX$ with 
$\left( D_x\varphi \right) \left( \eta_x \right) = u$
such that $\left\vert \eta_x \right\vert < c$.

To prove the lemma,
we suppose that for some sequence $(\delta_k)$ in $(0,\infty)$
with $\delta_k \to 0$,
there exists a sequence
of Busemann $(m,\delta_k)$-strainer maps
$\varphi_k \colon X_k \to \R^m$ on geodesically complete
$\CAT(0)$ spaces,
and a sequence $(x_k)$ with $x_k \in X_k$
such that
for some $u_k \in \Sph^{m-1}$, and
for all $\eta_{x_k} \in T_{x_k}X_k$ with 
$\left( D_{x_k}\varphi_k \right) \left( \eta_{x_k} \right) = u_k$
we have
\begin{equation}
\left\vert \eta_{x_k} \right\vert \ge 1+\epsilon.
\label{eqn: str1eopen1}
\end{equation}

Similarly to the proof of Lemma \ref{lem: almuspd},
let 
$\left( \ulim T_{x_k}X_k, 0_{\omega} \right)$
be the ultralimit of the sequence $\left( T_{x_k}X_k, 0_{x_k} \right)$
of the pointed metric spaces,
and 
$\left( \ulim \Sigma_{x_k}X_k, \zeta_{\omega} \right)$
the ultralimit of a sequence $\left( \Sigma_{x_k}X_k, \zeta_{x_k} \right)$.
Since each $T_{x_k}X_k$ is defined as
$C_0 \left( \Sigma_{x_k}X_k \right)$,
the ultralimit $\left( \ulim T_{x_k}X_k, 0_{\omega} \right)$
is isometric to $\left( C_0 \left( \ulim \Sigma_{x_k}X_k \right), 0 \right)$.
From our present assumption and Lemma \ref{lem: omdsusp},
we see that
$\ulim \Sigma_{x_k}X_k$ admits an $m$-fold suspender 
$\left( \xi_{1,\omega}, \dots, \xi_{m,\omega} \right)$.

Take the ultralimit $\ulim D_{x_k}\varphi_k$
of the sequence of the maps $D_{x_k}\varphi_k$
defined on $\ulim T_{x_k}X_k$.
Then $\ulim D_{x_k}\varphi_k$ coincides with 
$\left( b_{\gamma_1}, \dots, b_{\gamma_m} \right)$, 
where $b_{\gamma_i}$ is the Busemann function on 
$C_0 \left( \ulim \Sigma_{x_k}X_k \right)$ 
along the ray $\gamma_i$ emanating from $0$ toward $\xi_{i,\omega}$.
By Lemma \ref{lem: busemann},
the map $\ulim D_{x_k}\varphi_k$ is a homogenous submetry.
Take $u \in \Sph^{m-1}$ in 
$\ulim \R^m$ with $u_k \to u$,
where
$\ulim (\R^m,0)$ is the ultralimit of $(\R^m,0)$.
Since the map $\ulim D_{x_k}\varphi_k$ is $1$-open and homogeneous,
we find $\zeta \in C_0 \left( \ulim \Sigma_{x_k}X_k \right)$
with $\left\vert \zeta \right\vert = 1$
such that
$\left( \ulim D_{x_k}\varphi_k \right)(\zeta) = u$.
Choose a sequence $(\zeta_k)$ with $\zeta_k \in \Sigma_{x_k}X_k$ 
contained in $T_{x_k}X_k$
satisfying $\zeta_k \to \zeta$.
Then
$\left( D_{x_k}\varphi_k \right) \left( \zeta_k \right) \to u$.
Hence we have 
$\ulim \Vert \left( D_{x_k}\varphi_k \right) \left( \zeta_k \right) - u_k \Vert = 0$,
and hence for all $\omega$-large $k$
we have 
$\Vert \left( D_{x_k}\varphi_k \right)(\zeta_k) - u_k \Vert < \epsilon/2c$.
Since each $D_{x_k}\varphi_k$ is $c$-open,
for the $u_k \in \Sph^{m-1}$ we find $\eta_k \in T_{x_k}X_k$
with $d_{T_{x_k}X_k} \left( \eta_k, \zeta_k \right) < \epsilon/2$
such that
$\left( D_{x_k}\varphi_k \right) \left( \eta_k \right) = u_k$,
where $d_{T_{x_k}X_k}$ is the metric on $T_{x_k}X_k$.
Therefore we have
$\left\vert \eta_k \right\vert < 1 + \epsilon/2$.
This contradicts \eqref{eqn: str1eopen1}.
\end{proof}

Now we prove Proposition \ref{prop: almsubm}

\begin{proof}[Proof of Proposition \ref{prop: almsubm}]
Take $\epsilon \in (0,1)$ and $m \in \N$.
Let $\delta \in (0,1)$ be sufficiently small.
By Lemma \ref{lem: str1elip},
$\varphi$ is $(1+\epsilon)$-Lipschitz.
As shown in Lemma \ref{lem: str1eopen},
for every $u \in \Sph^{m-1}$,
and for every $x \in X$,
we find $\eta_x \in T_xX$ with 
$\left( D_x\varphi \right) \left( \eta_x \right) = u$
and $\left\vert \eta_x \right\vert < 1+\epsilon$.
Applying the Lytchak open map theorem 
\cite[Theorem 1.2]{lytchak2},
we conclude that $\varphi$ is $(1+\epsilon)$-open.
Thus we prove the proposition.
\end{proof}

\section{Proofs of the main theorems}

\subsection{Proof of Theorem \ref{thm: agrcat0}}

First we conclude the following:

\begin{prop}\label{prop: fullbstr}
For every $\epsilon \in (0,1)$,
and for every $n \in \N$,
there exists 
$\delta \in (0,1)$ satisfying the following:
Let $X$ be a geodesically complete,
complete $\CAT(0)$ space of $\dim_{\mathrm{G}}X \le n$.
If $\varphi \colon X \to \R^n$
is a Busemann $(n,\delta)$-strainer map on $X$,
then it is a $(1+\epsilon)$-bi-Lipschitz homeomorphism.
\end{prop}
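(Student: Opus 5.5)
By Proposition \ref{prop: almsubm}, once $\delta$ is small, $\varphi$ is $(1+\epsilon')$-Lipschitz and $(1+\epsilon')$-open for a prescribed $\epsilon'$. Since $X$ is complete, the facts recalled in Subsection 2.2 reduce the claim to the lower bound
\[
d_X(x,y) \le (1+\epsilon)\,\Vert \varphi(x) - \varphi(y) \Vert
\qquad \text{for all } x, y \in X .
\]
This bound gives injectivity, and together with $(1+\epsilon)$-openness it makes $\varphi$ a $(1+\epsilon)$-bi-Lipschitz homeomorphism. So the plan is to prove this ``co-Lipschitz on distances'' estimate. I would do it infinitesimally first and then integrate along geodesics.

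\textbf{Infinitesimal step.} I claim that, for $\delta$ small depending on $\epsilon$ and $n$, every $x \in X$ and every $v \in \Sigma_xX$ satisfy $\Vert (D_x\varphi)(v) \Vert > 1/(1+\epsilon')$. Recall that
\[
(D_x\varphi)(v) = -\left( \cos \angle_x\bigl((\xi_i)_x', v\bigr) \right)_{i=1}^{n}.
\]
If the claim failed, then $\sum_{i=1}^n \cos^2 \angle_x((\xi_i)_x',v)$ would be small, i.e.\ all angles $\angle_x((\xi_i)_x',v)$ would lie within some $c\delta$-window of $\pi/2$. This is excluded as follows.
\begin{enumerate}
\item By Lemma \ref{lem: sphstrinf}, $((\xi_i)_x')_{i}$ is an $(n,\delta)$-suspender in $\Sigma_xX$.
\item $\Sigma_xX$ is penetrable, since $X$ is geodesically complete.
\item $\dim_{\mathrm{G}} \Sigma_xX \le n-1$, by the definition of geometric dimension.
\end{enumerate}
Lemma \ref{lem: fullsusp} then says that no direction $v$ has all these angles within $c\delta$ of $\pi/2$.

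Making this quantitative is the delicate part, because Lemma \ref{lem: fullsusp} gives a window $c\delta$ tied to the same $\delta$. I would therefore argue by contradiction with ultralimits, exactly as in Lemmas \ref{lem: almuspd} and \ref{lem: aperp}. Take $\delta_k \to 0$ and directions $v_k$ with $\Vert (D_{x_k}\varphi_k)(v_k) \Vert \le 1/(1+\epsilon')$. Pass to the limit $Z_\omega = \ulim \Sigma_{x_k}X_k$, which carries an $n$-suspender and has $\dim_{\mathrm{G}} Z_\omega \le n-1$ by \eqref{eqn: lscdim}. Proposition \ref{prop: m0susp} then forces $Z_\omega \cong \Sph^{n-1}$ with the $\xi_{i,\omega}$ forming an orthonormal frame. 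Consequently $\sum_i \cos^2 d(\xi_{i,\omega},v_\omega) = 1$, which contradicts the assumed bound for $\omega$-large $k$.

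\textbf{Integration step.} Let $\gamma \colon [0,\ell] \to X$ be the geodesic from $x$ to $y$. For every $t \in [0,\ell)$,
\[
\Vert (\varphi\circ\gamma)^+(t) \Vert = \Vert (D_{\gamma(t)}\varphi)(y'_{\gamma(t)}) \Vert > 1/(1+\epsilon').
\]
Proposition \ref{prop: 1stvfbstr} shows that these right derivatives vary by at most $4\sqrt{n}\,\delta$ along $\gamma$. Hence they all lie close to the single vector $w = (\varphi\circ\gamma)^+(0)$, and
\[
\Vert \varphi(y) - \varphi(x) \Vert = \Bigl\Vert \int_0^\ell (\varphi\circ\gamma)^+(t)\,dt \Bigr\Vert \ge \ell\,\bigl(\Vert w \Vert - 4\sqrt{n}\,\delta\bigr) \ge \ell/(1+\epsilon)
\]
once $\epsilon'$ and $\delta$ are chosen suitably. (Alternatively, Lemma \ref{lem: 1stvfbstr0} gives the estimate directly, up to a $2\sqrt{n}\,\delta$ error.) This is the desired lower bound.

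The main obstacle I expect is the infinitesimal lower bound, specifically justifying that $Z_\omega$ is penetrable and has $\dim_{\mathrm{G}} \le n-1$. Penetrability passes to ultralimits, and the dimension bound uses $\dim_{\mathrm{G}} \Sigma_xX \le \dim_{\mathrm{G}} X - 1 \le n-1$.
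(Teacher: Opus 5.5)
Your proof is correct, but it takes a different route from the paper's. The paper proves only qualitative injectivity. If $x\neq y$ and $\varphi(x)=\varphi(y)$, then Lemma \ref{lem: almflattri}(2) puts every $\angle_x(\xi_i,y)$ within $2\delta$ of $\pi/2$, and Lemma \ref{lem: fullsusp} with $c=2$ forbids this. The bound $d_X(x,y)\le(1+\epsilon)\Vert\varphi(x)-\varphi(y)\Vert$ then follows from the $(1+\epsilon)$-openness in Proposition \ref{prop: almsubm}, because an injective $c$-open map on a complete space satisfies $d_X(x,y)\le c\Vert\varphi(x)-\varphi(y)\Vert$. You instead prove the quantitative pointwise bound $\Vert (D_x\varphi)(v)\Vert>1/(1+\epsilon')$ by the ultralimit rigidity argument of Lemma \ref{lem: aperp}, and then integrate it with Lemma \ref{lem: 1stvfbstr0} or Proposition \ref{prop: 1stvfbstr}.

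The two routes buy different things. The paper's is shorter and reuses Lemma \ref{lem: fullsusp} verbatim, but its sharp co-Lipschitz constant is inherited from Lemma \ref{lem: str1eopen} and Lytchak's open map theorem. Yours produces the sharp lower constant directly. You therefore only need Lemma \ref{lem: str1elip} for the upper bound and the crude openness of Lemma \ref{lem: wopen} for surjectivity. The cost is an extra compactness argument and reliance on the almost-affineness statements whose proofs the paper leaves to the reader.

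One correction to your write-up. Your preliminary heuristic is wrong as stated: failure of the claim gives only $\sum_i\cos^2\angle_x((\xi_i)_x',v)\le(1+\epsilon')^{-2}$. That sum is bounded away from $1$ but not small, so the angles need not be near $\pi/2$ and Lemma \ref{lem: fullsusp} does not apply, even qualitatively. The infinitesimal step is carried entirely by your ultralimit argument. That argument is sound, since $\dim_{\mathrm{G}}Z_\omega\le n-1$ rules out a nonempty polar factor $C_{n-1}^{\perp}$ in Proposition \ref{prop: m0susp}.
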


\begin{proof}
Let $\delta \in (0,1)$ be sufficiently small.
Let $X$ be a complete $\CAT(0)$ space of $\dim_{\mathrm{G}}X \le n$.
Let $\varphi \colon X \to \R^n$
be a Busemann $(n,\delta)$-strainer map on $X$
with $\varphi = \left( b_{\gamma_1}, \dots, b_{\gamma_n} \right)$.
For each $i \in \{ 1, \dots, m \}$,
we put $\xi_i := \gamma_i(\infty)$.

We prove the injectivity of $\varphi$.
Suppose that
we find distinct points $x, y$ in $X$
with $\varphi(x) = \varphi(y)$.
By Lemma \ref{lem: sphstrinf},
the penetrable $\CAT(1)$ space $\Sigma_xX$
admits an $(n,\delta)$-suspender 
$\left( (\xi_1)_x', \dots, (\xi_n)_x' \right)$.
Since we have $\varphi(x) = \varphi(y)$,
by Lemma \ref{lem: almflattri} we have
$\left\vert \angle_{x}(\xi_i,y) - \pi/2 \right\vert < 2\delta$
for all $i \in \{ 1, \dots, m \}$.
This contradicts Lemma \ref{lem: fullsusp}.
Hence $\varphi$ is injective.

From Proposition \ref{prop: almsubm}
it follows that $\varphi$ is $(1+\epsilon)$-Lipschitz
and $(1+\epsilon)$-open.
This together with the injectivity of $\varphi$
implies that $\varphi$ is 
a $(1+\epsilon)$-bi-Lipschitz homeomorphism.
\end{proof}

From Proposition \ref{prop: fullbstr}
we derive the following:

\begin{thm}
\label{thm: agrcat0g}
For every $\epsilon \in (0,1)$,
and for every $n \in \N$,
there exists $\delta \in (0,1)$ such that
if a geodesically complete $\CAT(0)$ space $X$
of $\dim_{\mathrm{G}} X \le n$
satisfies
$d_{\GH} \left( \partial_{\mathrm{T}}X, \Sph^{n-1} \right) < \delta$,
then $X$ is $(1+\epsilon)$-bi-Lipschitz homeomorphic to $\R^n$.
\end{thm}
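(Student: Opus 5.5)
The plan is to reduce Theorem \ref{thm: agrcat0g} to Proposition \ref{prop: fullbstr}. Given $\epsilon$ and $n$, let $\delta_1$ be the constant of Proposition \ref{prop: fullbstr} for this $\epsilon$ and $n$. It then suffices to find $\delta>0$ such that $d_{\GH}(\partial_{\T}X,\Sph^{n-1})<\delta$ forces $\partial_{\T}X$ to contain an $(n,\delta_1)$-suspender $(\xi_1,\dots,\xi_n)$, that is, an $(n,\delta_1)$-strainer at infinity. Once we have it, we take the rays $\gamma_i:=\gamma_{p\xi_i}$ from any base point $p$. Then $\varphi=(b_{\gamma_1},\dots,b_{\gamma_n})$ is a Busemann $(n,\delta_1)$-strainer map in the sense of Definition \ref{defn: strm}, and Proposition \ref{prop: fullbstr} gives that $\varphi$ is a $(1+\epsilon)$-bi-Lipschitz homeomorphism onto $\R^n$. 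Note that $\partial_\infty X$ is nonempty, since it is GH-close to the sphere, so rays exist.

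To produce the suspender, I fix the standard points $e_1,\dots,e_n,-e_1,\dots,-e_n\in\Sph^{n-1}$. Since $d_{\GH}<\delta$, there is a $2\delta$-approximation $\psi\colon\Sph^{n-1}\to\partial_{\T}X$, whose image is $2\delta$-dense. Put $\xi_i:=\psi(e_i)$ and $\eta_i:=\psi(-e_i)$. Then $d_{\T}(\xi_i,\xi_j)$, $d_{\T}(\xi_i,\eta_j)$ and $d_{\T}(\eta_i,\eta_j)$ lie within $2\delta$ of $\pi/2$ for $i\ne j$, which gives \eqref{eqn: dortho} once $2\delta<\delta_1$.

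For the $(1,\delta_1)$-suspender condition \eqref{eqn: dsph}, take any $z\in\partial_{\T}X$ and choose $w\in\Sph^{n-1}$ with $d_{\T}(z,\psi(w))<2\delta$. Then
\[
d_{\T}(\xi_i,z)+d_{\T}(z,\eta_i)<d(e_i,w)+d(w,-e_i)+8\delta=\pi+8\delta .
\]
So the supremum is at most $\pi+8\delta<\pi+\delta_1$ once $\delta<\delta_1/8$.

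The one point needing care is that Tits distances can be infinite or large, and a GH-approximation controls only the metric $d_{\T}$ itself. This is harmless here: the GH closeness is stated for $d_{\T}$, and it already forces $\diam\partial_{\T}X<\pi+4\delta$, so every estimate above is finite. One should also check that the hypotheses of Proposition \ref{prop: fullbstr} (geodesic completeness and $\dim_{\mathrm G}X\le n$) are exactly those assumed in Theorem \ref{thm: agrcat0g}, which they are. Taking $\delta:=\min\{\delta_1/8,1\}$ then finishes the proof.

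I expect no real obstacle beyond this bookkeeping. The substantive work lies in Proposition \ref{prop: fullbstr}, which is already established.
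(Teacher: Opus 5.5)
Your proposal is correct and follows the paper's proof: both extract an $(n,c\delta)$-strainer at infinity from the Gromov--Hausdorff closeness of $\partial_{\T}X$ to $\Sph^{n-1}$, form the Busemann map $\varphi=(b_{\gamma_1},\dots,b_{\gamma_n})$, and apply Proposition \ref{prop: fullbstr}. The only difference is that the paper simply asserts the existence of an $(n,10\delta)$-strainer at infinity, whereas you construct one explicitly from the images of $\pm e_i$ under a $2\delta$-approximation and obtain the constant $8\delta$.
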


\begin{proof}
For $\epsilon \in (0,\infty)$,
and for $n \in \N$,
let $\delta \in (0,1)$ be sufficiently small.
Let $X$ a geodesically complete $\CAT(0)$ space $X$
of $\dim_{\mathrm{G}}X = n$
satisfying
$d_{\GH} \left( \partial_{\mathrm{T}}X, \Sph^{n-1} \right) < \delta$.

In this case,
since we have 
$d_{\GH} \left( \partial_{\mathrm{T}}X, \Sph^{n-1} \right) < \delta$,
there exists an $(n,10\delta)$-strainer at infinity
$\left( \xi_1, \dots, \xi_n \right)$.
For each $i \in \{ 1, \dots, n \}$,
take a ray $\gamma_i \colon [0,\infty) \to X$
with $\xi_i = \gamma_i(\infty)$.
Then the map $\varphi \colon X \to \R^n$
defined by $\varphi = \left( b_{\gamma_1}, \dots, b_{\gamma_n} \right)$
is a Busemann $(n,10\delta)$-strainer map.
From Proposition \ref{prop: fullbstr}
we conclude that 
$\varphi$ is a $(1+\epsilon)$-bi-Lipschitz homeomorphism.
This completes the proof of Theorem \ref{thm: agrcat0}.
\end{proof}

\begin{proof}[Proof of Theorem \ref{thm: agrcat0}]
Theorem \ref{thm: agrcat0g}
and Theorem \ref{thm: dimgdimt} of Kleiner \cite{kleiner}
on geometric dimensions
imply Theorem \ref{thm: agrcat0}.
\end{proof}

\subsection{Proof of Theorem \ref{thm: grcat1}}

We denote by $\vartheta_n \colon (0,\infty) \to (0,\infty)$ 
a positive function
depending only on $n$
satisfying $\lim_{\delta \to 0} \vartheta_n(\delta) = 0$.

Based on the same idea in \cite[Theorem 9.5]{burago-gromov-perelman}
of Burago--Gromov--Perelman
for $\CBB$ spaces
with curvature bounded below,
we prove the following
for geoesically complete $\CAT(1)$ spaces:

\begin{thm}
\label{thm: grcat1g}
For every $\epsilon \in (0,1)$,
and for every $n \in \N$,
there exists $\delta \in (0,1)$ such that
if a geodesically complete $\CAT(1)$ space $Z$
of $\dim_{\mathrm{G}} Z \le n-1$ satisfies
$d_{\GH} \left( Z, \Sph^{n-1} \right) < \delta$,
then $Z$ is $(1+\epsilon)$-bi-Lipschitz homeomorphic to $\Sph^{n-1}$.
\end{thm}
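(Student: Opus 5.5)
We will derive the statement from Theorem \ref{thm: agrcat0g} applied to the Euclidean cone $X := C_0(Z)$, following the Burago--Gromov--Perelman idea of passing through the cone and restricting to the unit sphere. Since $Z$ is $\CAT(1)$, $C_0(Z)$ is $\CAT(0)$. Since $Z$ is geodesically complete and not a singleton (it is $\delta$-close to $\Sph^{n-1}$), $C_0(Z)$ is geodesically complete. By Kleiner's formula, $\dim_{\mathrm{G}} C_0(Z) = 1 + \dim_{\mathrm{G}} Z \le n$, because spaces of directions at the vertex are $Z$ and elsewhere are spherical suspensions of spaces of directions of $Z$. Moreover $\partial_{\T}C_0(Z)$ is isometric to $Z$ truncated appropriately; more precisely, the angle metric on $\partial_\infty C_0(Z)$ is $d_Z\wedge\pi$, so $\partial_{\T}C_0(Z)$ and $Z$ coincide at scales below $\pi$. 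Hence $d_{\GH}(\partial_{\T}X,\Sph^{n-1})$ is small.

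Concretely, we pick an $(n,10\delta)$-suspender $(\xi_1,\dots,\xi_n)$ in $Z$, obtained from an approximation to the coordinate points $\pm e_i$ of $\Sph^{n-1}$. We then form the Busemann map $\varphi=(b_{\gamma_1},\dots,b_{\gamma_n})$ with $\gamma_i(t)=t\xi_i$. As in Lemma \ref{lem: busemann}, this map is $\varphi(s\zeta) = -s(\cos d_Z(\xi_1,\zeta),\dots,\cos d_Z(\xi_n,\zeta))$. By Proposition \ref{prop: fullbstr}, $\varphi$ is a $(1+\epsilon')$-bi-Lipschitz homeomorphism $C_0(Z)\to\R^n$, and it is positively homogeneous, with $\varphi(0)=0$.

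Next we define $f\colon Z\to\Sph^{n-1}$ by $f(\zeta):=\varphi(\zeta)/\Vert\varphi(\zeta)\Vert$, with $\zeta$ viewed at radius $1$ in the cone. Since $\varphi$ is $(1+\epsilon')$-bi-Lipschitz and fixes $0$, we get $\Vert\varphi(\zeta)\Vert\in[(1+\epsilon')^{-1},1+\epsilon']$. By homogeneity and bijectivity, $f$ is a bijection, since rays from $0$ map to rays from $0$. It remains to compare metrics. For nearby $\zeta_1,\zeta_2$, the cone distance between the unit points is $2\sin(d_Z/2)$, which is comparable to the chordal distance on $\Sph^{n-1}$. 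Radial normalization of points with norms near $1$ distorts distances by a factor $1+\vartheta_n(\delta)$. So $f$ is locally $(1+\vartheta_n(\delta))$-bi-Lipschitz with respect to the chordal and cone metrics, hence with respect to the intrinsic metrics $d_Z$ and the angle on $\Sph^{n-1}$. Both are length metrics, so local bi-Lipschitz bounds for the homeomorphism $f$ integrate to global ones, and we choose $\delta$ so that $\vartheta_n(\delta)<\epsilon$.

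The main obstacle is this normalization step. The bound on $\Vert\varphi(\zeta)\Vert$ alone controls the radial distortion only up to an additive error, not multiplicatively at small scales. To get a multiplicative bound, I would use the derivative control instead. Differentiating $\varphi$ along a geodesic in $Z$ at unit radius, the tangential component is $(1+\epsilon')$-close to an isometry. The radial derivative of $\Vert\varphi\Vert$ along directions tangent to $Z$ is small by the first variation formula together with Lemma \ref{lem: aperp}. That lemma shows that $\sum_i\cos d(\xi_i,\zeta)\,\cos d(\xi_i,w)$ is nearly zero for $w$ at distance about $\pi/2$ from $\zeta$, which makes $\varphi(\zeta)$ nearly orthogonal to the image of directions perpendicular to the radial one. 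This is the step where I expect the real work to lie.
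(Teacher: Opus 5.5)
Your proposal is correct. The step you single out as the main obstacle is not one: the route in your third paragraph already closes, with no derivative analysis. In any Euclidean cone $C_0(Y)$, write $u=s\hat u$ and $v=t\hat v$ with $\hat u,\hat v$ on the unit sphere. The cone formula gives
\[
d(u,v)^2=(s-t)^2+st\,d(\hat u,\hat v)^2,
\qquad
d(\hat u,\hat v)=2\sin\bigl((d_Y\wedge\pi)(\hat u,\hat v)/2\bigr).
\]
So radial projection from $\{s\ge r\}$ onto the unit sphere is $1/r$-Lipschitz for the chordal metric, multiplicatively at every scale.

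Now put $L:=1+\epsilon'$. Since $\varphi$ and $\varphi^{-1}$ are $L$-Lipschitz and fix the origin, both send unit-radius points to points of norm at least $1/L$. Moreover, $f$ is the radial projection of $\varphi|_Z$, and by homogeneity $f^{-1}(u)$ is the radial projection of $\varphi^{-1}(u)$. Hence $f$ is $L^2$-bi-Lipschitz for the chordal metrics on $Z$ and $\Sph^{n-1}$.

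Chordal and intrinsic lengths of curves coincide. Apply this to a geodesic of $Z$ when $d_Z(\zeta_1,\zeta_2)<\pi$; if $d_Z(\zeta_1,\zeta_2)\ge\pi$, the upper bound is trivial since $d_{\Sph^{n-1}}\le\pi$. Apply it also to the $f^{-1}$-image of a great-circle arc. Together these give $L^{-2}d_Z(\zeta_1,\zeta_2)\le d_{\Sph^{n-1}}(f(\zeta_1),f(\zeta_2))\le L^2 d_Z(\zeta_1,\zeta_2)$. Only the $\pi$-geodesic property of $Z$ is used here, not that $d_Z$ is a length metric.

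The paper follows exactly your fallback. Along a $Z$-geodesic $\gamma$, it applies Lemma \ref{lem: aperp} in $\Sigma_{\gamma(t)}C_0(Z)$ to see that the differential maps the tangent direction almost orthogonally to the radial direction. Hence normalizing changes the speed of the image curve only by a factor $1+\vartheta_n(\delta)$; it integrates this and treats $d_Z\ge\pi/100$ separately.

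Your chordal route needs only that $\varphi$ is a positively homogeneous bi-Lipschitz homeomorphism with $\varphi(0)=0$. It therefore dispenses with Lemma \ref{lem: aperp} and with the case distinction. The paper's route yields pointwise almost-orthogonality of the differential, which the statement does not require.

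Your choice of the rays $t\mapsto t\xi_i$ from the vertex is exactly what makes $f$ well defined and bijective; the paper leaves this implicit. Two small points remain. The coordinates should read $-s\cos\bigl((d_Z\wedge\pi)(\xi_i,\zeta)\bigr)$. And you should check the suspender inequalities for the Tits metric also for pairs at $d_Z$-distance $\ge\pi$; approximate midpoints in $Z$, obtained from the Gromov--Hausdorff approximation, give $d_{\T}\le d_Z+O(\delta)$.
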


\begin{proof}
For $n \in \N$,
let $\delta \in (0,1)$ be small enough.
Let $Z$ be a geodesically complete $\CAT(1)$ space with metric $d_Z$
of $\dim_{\mathrm{G}}Z \le n-1$
satisfying $d_{\GH} \left( Z, \Sph^{n-1} \right) < \delta$.
Then the Euclidean cone $C_0(Z)$ over $Z$
is a geodesically complete $\CAT(0)$ space
satisfying $\dim_{\mathrm{G}}C_0(Z) \le n$.
Since $Z$ is isometric to $\partial_{\mathrm{T}}C_0(Z)$,
we have 
$d_{\GH} \left( \partial_{\mathrm{T}}C_0(Z), \Sph^{n-1} \right) < \delta$.
In this case,
there exists an $(n,10\delta)$-strainer at infinity
$\left( \xi_1, \dots, \xi_n \right)$.
For each $i \in \{ 1, \dots, n \}$,
take a ray $\gamma_i \colon [0,\infty) \to C_0(Z)$
with $\xi_i = \gamma_i(\infty)$.
Then the map $\varphi_0 \colon C_0(Z) \to \R^n$
defined by 
$\varphi_0 = \left( b_{\gamma_1}, \dots, b_{\gamma_n} \right)$
is a Busemann $(n,10\delta)$-strainer map.
Due to Proposition \ref{prop: fullbstr},
the map
$\varphi_0$ is a 
$\left( 1+\vartheta_n(\delta) \right)$-bi-Lipschitz homeomorphism.

Identify the unit metric sphere centered at the vertex in $C_0(Z)$ with 
$Z$,
and do the unit metric sphere centered at the origin in $\R^n$ with 
$\Sph^{n-1}$.
Define a bijective map $\varphi \colon Z \to \Sph^{n-1}$
by
\[
\varphi(z) := \frac{1}{\Vert \varphi_0(z) \Vert}.
\]
Take distinct points $z_1, z_2$ in $Z$.
If $z_1$ and $z_2$ satisfy $d_Z(z_1,z_2) \ge \pi/100$,
then we see 
\[
\left\vert
d_{\Sph^{n-1}} \left( \varphi(z_1), \varphi(z_2) \right) - 
d_Z \left( z_1, z_2 \right) 
\right\vert
< \left( 1+ \vartheta_n(\delta) \right) d_Z \left( z_1, z_2 \right),
\]
where $d_{\Sph^{n-1}}$ is the metric on $\Sph^{n-1}$.

Assume that 
$z_1$ and $z_2$ satisfy $d_Z(z_1,z_2) < \pi/100$.
Set $l := d_Z(z_1,z_2)$.
Let $\gamma \colon [0,l] \to Z$ be the geodesic
in $Z$ from $z_1$ to $z_2$.
Take $t \in [0,l)$.
By Lemma \ref{lem: sphstrinf},
the $n$-tuple 
$\left( (\xi_1)_{\gamma(t)}', \dots, (\xi_n)_{\gamma(t)}' \right)$
is an $(n,10\delta)$-strainer in $\Sigma_{\gamma(t)}C_0(Z)$.
Let $\xi_{\gamma(t)}$ be the direction $\left (D_t\gamma \right)(1)$
in $\Sigma_{\gamma(t)}C_0(Z)$.
Since $\varphi_0$ has Busemann function coordinates,
we have
\begin{align*}
\left( D_{\gamma(t)} \varphi_0 \right) \left( \xi_{\gamma(t)} \right)
&= - \left( \cos \angle_{\gamma(t)}
\left( \xi_1, \xi_{\gamma(t)} \right), \dots,
\cos \angle_{\gamma(t)} \left( \xi_n, \xi_{\gamma(t)} \right) \right), \\
\left( D_{\gamma(t)} \varphi_0 \right) \left( 0_{\gamma(t)}' \right)
&= - \left( \cos \angle_{\gamma(t)} \left( \xi_1, 0_{\gamma(t)}' \right), 
\dots,
\cos \angle_{\gamma(t)} \left( \xi_n, 0_{\gamma(t)}' \right) \right).
\end{align*}
Since $\varphi_0$ is a 
$\left( 1+\vartheta_n(\delta) \right)$-bi-Lipschitz homeomorphism,
we have
\[
\left\vert 
\left\Vert
\left( D_{\gamma(t)} \varphi_0 \right) \left( \xi_{\gamma(t)} \right) 
\right\Vert - 1
\right\vert < \vartheta_n(\delta),
\quad
\left\vert 
\left\Vert
\left( D_{\gamma(t)} \varphi_0 \right) \left( 0_{\gamma(t)}' \right) 
\right\Vert - 1
\right\vert < \vartheta_n(\delta).
\]
Applying Lemma \ref{lem: aperp} to $\Sigma_{\gamma(t)}C_0(Z)$,
we see
\[
\left\vert 
\angle_{\gamma(t)} \left(
\left( D_{\gamma(t)} \varphi_0 \right) \left( \xi_{\gamma(t)} \right),
\left( D_{\gamma(t)} \varphi_0 \right) \left( 0_{\gamma(t)}' \right) \right)
- \frac{\pi}{2} 
\right\vert < \vartheta_n(\delta);
\]
in other words,
the tangent vector 
$\left( D_{\gamma(t)} \varphi_0 \right) \left( \xi_{\gamma(t)} \right)$
of $\varphi_0 \circ \gamma$ at $t$
and the vector from $(\varphi_0 \circ \gamma)(t)$ to the origin $0$ 
are almost perpendicular.
Hence we see
\[
\left\vert
\frac{
\left\Vert \left( D_{\gamma(t)} \varphi \right) 
\left( \xi_{\gamma(t)} \right) \right\Vert}
{\left\Vert \left( D_{\gamma(t)} \varphi_0 \right) 
\left( \xi_{\gamma(t)} \right) \right\Vert}
-1
\right\vert < \vartheta_n(\delta).
\]
Therefore we obtain
\begin{multline*}
d_{\Sph^{n-1}} \left( \varphi(z_1), \varphi(z_2) \right)
\le \int_0^l \left\Vert \left( D_{\gamma(t)} \varphi \right) 
\left( \xi_{\gamma(t)} \right) \right\Vert dt \\
< \left( 1+\vartheta_n(\delta) \right) 
\int_0^l \left\Vert \left( D_{\gamma(t)} \varphi_0 \right) 
\left( \xi_{\gamma(t)} \right) \right\Vert dt
< \left( 1+\vartheta_n(\delta) \right) d_Z(z_1,z_2).
\end{multline*}

Set $\bar{l} := d_{\Sph^{n-1}} \left( \varphi(z_1), \varphi(z_2) \right)$.
Let $\bar{\gamma} \colon [0,\bar{l}] \to \Sph^{n-1}$ be 
the geodesic in $\Sph^{n-1}$
from $\varphi(z_1)$ to $\varphi(z_2)$, and
$\tilde{\gamma} \colon [0,l] \to \Sph^{n-1}$
the constant-speed reparametrization 
of $\bar{\gamma}$ defined by 
$\tilde{\gamma}(t) := \bar{\gamma} \left( (\bar{l}/l)t \right)$.
Since the tangent vector 
$\left( D_{\gamma(t)} \varphi_0 \right) \left( \xi_{\gamma(t)} \right)$
and the vector from $(\varphi_0 \circ \gamma)(t)$ to the origin $0$
are almost perpendicular,
we see
\[
\left\vert
\frac
{\left\Vert \left( D_{\gamma(t)} \varphi_0 \right) 
\left( \xi_{\gamma(t)} \right) \right\Vert}
{\left\Vert \left( D_t\tilde{\gamma} \right) (1) \right\Vert}
-1
\right\vert < \vartheta_n(\delta).
\]
Therefore we obtain
\begin{multline*}
d_{\Sph^{n-1}} \left( \varphi(z_1), \varphi(z_2) \right)
= \int_0^l \left\Vert \left( D_t\tilde{\gamma} \right) (1) \right\Vert dt \\
> \left( 1-\vartheta_n(\delta) \right) 
\int_0^l \left\Vert \left( D_{\gamma(t)} \varphi_0 \right) 
\left( \xi_{\gamma(t)} \right) \right\Vert dt
= \left( 1-\vartheta_n(\delta) \right) d_Z(z_1,z_2).
\end{multline*}
Thus $\varphi$ is a 
$\left( 1+\vartheta_n(\delta) \right)$-bi-Lipschitz homeomorphism.

In this way, 
we have completed the proof of Theorem \ref{thm: grcat1g}.
\end{proof}

\begin{proof}[Proof of Theorem \ref{thm: grcat1}]
Combining Theorem \ref{thm: grcat1g}
and Theorem \ref{thm: dimgdimt} of Kleiner \cite{kleiner}
on geometric dimensions,
we conclude Theorem \ref{thm: grcat1}.
\end{proof}




\begin{thebibliography}{99}

\bibitem{alexander-kapovitch-petrunin-0}
S. Alexander, V. Kapovitch, and A. Petrunin,
\emph{An Invitation to Alexandrov Geometry: $\CAT(0)$ Spaces},
Springer Briefs in Mathematics,
Springer, 2019.

\bibitem{alexander-kapovitch-petrunin}
S. Alexander, V. Kapovitch, and A. Petrunin,
\emph{Alexandrov Geometry: Foundations}, 
Graduate Studies in Mathematics, Volume 236, Amer. Math. Soc.,
2024.

\bibitem{ancel-davis-guilbault}
F. D. Ancel, M. W. Davis, and C. R. Guilbault,
\emph{$\CAT(0)$ reflection manifolds},
Geometric Topology, Part 1 (W. H. Kazez, ed.),
AMS/IP Studies in Advanced Mathematics, Volume 2,
Amer. Math. Soc., 1997,
pp.441--445.

\bibitem{ballmann}
W. Ballmann,
\emph{Lectures on Spaces of Nonpositive Curvature},
DMV Seminar, Band 25, Birkh\"{a}user, 1995.

\bibitem{ballmann-gromov-schroeder}
W. Ballmann, M. Gromov, and V. Schroeder,
\emph{Manifolds of Nonpositive Curvature},
Progress in Mathematics, Volume 61, Birkh\"{a}user, 1985.

\bibitem{bridson-haefliger}
M. R. Bridson and A. Haefliger,
\emph{Metric Spaces of Non-Positive Curvature},
Grundlehren der Mathematischen Wissenshaften,
Volume 319,
Springer-Verlag,
1999.

\bibitem{brown}
M. Brown,
\emph{The monotone union of open cells is an open ball},
Proc. Amer. Math. Soc. {\bf 12} (1961), 812--814.

\bibitem{burago-burago-ivanov}
D. Burago, Yu. D. Burago, and S. Ivanov,
\emph{A Course in Metric Geometry},
Graduate Studies in Mathematics, Volume 33, Amer. Math. Soc.,
2001.

\bibitem{burago-gromov-perelman}
Yu. D. Burago, M. Gromov, and G. Perelman,
\emph{A. D. Alexandrov spaces with curvature bounded below}
[Russian],
Uspekhi Mat. Nauk {\bf 47} (1992), no.~2 (284), 3--51, 222;
translation in 
Russian Math. Surveys {\bf 47} (1992), no.~2, 1--58.

\bibitem{davis-januszkiewicz}
M. W. Davis and T. Januszkiewicz,
\emph{Hyperbolization of polyhedra},
J. Differential Geom. {\bf 34} (1991), 347--388.

\bibitem{fujioka-gu}
T. Fujioka and S. Gu,
\emph{Topological regularity of Busemann spaces of nonpositive curvature},
preprint, 2025:
arXiv:2504.14455.

\bibitem{fujiwara-nagano-shioya}
K. Fujiwara, K. Nagano, and T. Shioya,
\emph{Fixed point sets of parabolic isometries of $\CAT(0)$-spaces},
Comment. Math. Helv. {\bf 81} (2006), 305--335.

\bibitem{gromovq}
M. Gromov,
\emph{Hyperbolic manifolds, groups and actions},
Riemann Surfaces and Related Topics:
Proceedings of the 1978 Stony Brook Conference 
(I. Kra and B. Maskit, eds.),
Annals of Mathematics Studies, Volume 97,
Princeton University Press, 1981,
pp.~183--213.

\bibitem{gromovh}
M. Gromov,
\emph{Hyperbolic groups},
Essays in Group Theory (S. M. Gersten, ed.), 
Mathematical Sciences Research Institute Publication, Volume 8, Springer, 1987,
pp.~75--263.

\bibitem{kleiner}
B. Kleiner,
\emph{The local structure of length spaces with curvature bounded above},
Math. Z. {\bf 231} (1999), 409--456.

\bibitem{leeb}
B. Leeb,
\emph{A characterization of irreducible symmetric spaces and Euclidean buildings
of higher rank by their asymptotic geometry},
Bonner Mathematische Schriften 326,
Universit\"{a}t Bonn, Mathematisches Institut, 2000.

\bibitem{lytchak2}
A. Lytchak,
\emph{Open map theorem in metric spaces},
Algebra i Analiz {\bf 17} (2005), no.~3, 139--159;
St. Petersburg Math. J. {\bf 17} (2006), no.~3, 477--491.

\bibitem{lytchak-nagano1}
A. Lytchak and K. Nagano,
\emph{Geodesically complete spaces with an upper curvature bound},
Geom. Funct. Anal. {\bf 29} (2019), no.~1, 295--342.

\bibitem{lytchak-nagano2}
A. Lytchak and K. Nagano,
\emph{Topological regularity of spaces with an upper curvature bound},
J. Eur. Math. Soc. (JEMS) {\bf 24} (2022), no.~1, 137--165.

\bibitem{lytchak-nagano-stadler}
A. Lytchak, K. Nagano, and S. Stadler,
\emph{$\CAT(0)$ $4$-manifolds are Euclidean},
Geom. Topol. 28 (2024), no.~7, 3285--3308.

\bibitem{nagano4}
K. Nagano,
\emph{Volume pinching theorems for $\CAT(1)$ spaces},
Amer. J. Math. {\bf 144} (2022), no.~1, 267--285.

\bibitem{nagano5}
K. Nagano,
\emph{Asymptotic topological regularity of $\CAT(0)$ spaces}
Ann. Global Anal. Geom. {\bf 61} (2022), no.~2, 427--457.

\bibitem{nagano6}
K. Nagano,
\emph{Wall singularity of spaces with an upper curvature bound}
preprint, 2026:
arXiv:2601.22673.

\bibitem{rolfsen}
D. Rolfsen,
\emph{Strongly convex metrics in cells},
Bull. Amer. Math. Soc. {\bf 78} (1968), 171--175.

\bibitem{stallings}
J. Stallings,
\emph{The piecewise-linear structure of Euclidean spaces},
Proc. Cambridge Phil. Soc. {\bf 58} (1962), 481--488. 

\bibitem{thurston}
P. Thurston,
\emph{$\CAT(0)$ $4$-manifolds possessing a single tame point are Euclidean},
J. Geom. Anal. {\bf 6} (1996), no.~3, 475--494.

\end{thebibliography}
\end{document}